\documentclass[11pt,leqno]{article}

\usepackage{fontenc}
\usepackage{lmodern}      

\usepackage{amsmath,amssymb,amsthm,mathtools}

\DeclareMathAlphabet{\mathfrak}{U}{euf}{m}{n}

\numberwithin{equation}{section}

\makeatletter
\renewcommand{\tagform@}[1]{\maketag@@@{[#1]\ignorespaces}}
\makeatother



\usepackage{epigraph}

\usepackage{enumitem}

\usepackage{refcount}     

\usepackage{eurosym}

\usepackage{mathrsfs}

\usepackage{textcomp}

\DeclareRobustCommand{\gbp}{%
  \ifmmode
    \mathbin{\text{\pounds}}%
  \else
    \pounds
  \fi
}

\DeclareRobustCommand{\lb}{%
  \ifmmode
    \mathbin{\text{\pounds}}%
  \else
    \pounds
  \fi
}

\usepackage{bm}

\usepackage{tikz}

\usepackage{titlesec}

\newcommand{\Bendcode}[2][]{%
  \mspace{1\medmuskip}%
  \vphantom{#2}%
  \begin{tikzpicture}[baseline=(M.south)]
    \node[inner ysep=0pt,inner xsep=4pt](M){\smash[b]{$#2\mathstrut$}};
    \draw[rounded corners=.5mm,#1]
      ([xshift=-1mm]M.south east)--(M.south east)--++(0,.14);
    \draw[rounded corners=.5mm,#1]
      ([xshift=1mm]M.south west)--(M.south west)--++(0,.14);
  \end{tikzpicture}%
  \mspace{1\medmuskip}%
}

\makeatletter
\newtheoremstyle{fabthm}%
  {1.5ex plus 0.5ex}
  {1.5ex plus 0.5ex}
  {\rmfamily}
  {}
  {}
  {}
  {0.5em}
  {%
    \thmname{\textsc{#1}}%
    \if\relax\detokenize{#2}\relax
    \else
      \ \thmnumber{\textsc{#2}}%
    \fi
    \if\relax\detokenize{#3}\relax
    \else
      \ #3%
    \fi
  }
\makeatother

\theoremstyle{fabthm}

\newtheorem{thm}{Theorem}[section]

\newtheorem{axiom}[thm]{Axiom}

\newtheorem{conv}[thm]{Convention}
\newtheorem{cor}[thm]{Corollary}
\newtheorem{dfn}[thm]{Definition}

\newtheorem{fact}[thm]{Fact}

\newtheorem{lem}[thm]{Lemma}

\newtheorem{postulate}[thm]{Postulate}

\newtheorem{proposition}[thm]{Proposition}

\newtheorem{remark}[thm]{Remark}

\newtheorem{substitutiontheorem}[thm]{The substitution Theorem}

\newcommand{\VisionLeft}{\mathopen{\langle\!\langle}}
\newcommand{\VisionRight}{\mathclose{\rangle\!\rangle}}
\newcommand{\DefEqual}{\mathrel{:=}}
\DeclareRobustCommand{\Visible}{\mathop{\bm{!}}}
\DeclareRobustCommand{\Invisible}{\mathop{\text{\bfseries\textexclamdown}}}
\newcommand{\TruthPred}{\operatorname{\ensuremath{\mathsf{Tr}}}}
\DeclareRobustCommand{\Setpredicate}{\operatorname{\mathsf{Set}}}

\newcommand{\Anschauung}[2]{\VisionLeft #1 : #2 \VisionRight}
\newcommand{\Proposisjon}[1]{\VisionLeft #1 \VisionRight}

\newcommand{\EndDef}{\par\hfill $\Diamond$}

\usepackage[backend=biber,isbn=false,style=authoryear]{biblatex}

\renewbibmacro{in:}{}

\usepackage[hidelinks]{hyperref}
\usepackage{cleveref}

\title{Visibilism and Visibility Theory:\\[0.3em]
Sets, Truth, Paradox, Provability, Visibility, Visions}

\author{Frode Alfson Bjørdal} 
\date{}

\begin{document}

\maketitle

\thispagestyle{empty}

\medskip

\bigskip

\bigskip

\epigraph{
The paradox is not\\ 
that inconsistency is visible,\\
but that invisibility is derivable.
}{}
\setlength\epigraphrule{0pt}

\newpage

\tableofcontents

\newpage

\section{Introduction}

\setlength\epigraphrule{0pt}

\epigraph{Theorems are at least provable.}{}

\noindent Visibility Theory develops out of earlier librationist work, as \parencite{Bjordal2012, Bjordal2025arxiv}, but replaces their \emph{contrasistent} approaches with a visibility-theoretic framework grounded in visions, visibility, ordinal permanence and Set. Visibility inherits asymmetries and tensions reminiscent of librationist truth theory. While important structural techniques from librationism, as manifestation constructions, remain available, \textbf{VT} recovers classical truth and set theory within the domain of \emph{propositions}.

The librationist accounts were extensions of Cantini's system \textbf{LES}, of \parencite[400]{Cantini1996}. Where those frameworks employed truth and sorts, \textbf{VT} employs visibility and visions.
The resulting framework retains structural techniques 
from the contrasistent librationism, but its treatment of semantic paradoxes by means of visibility, in a framework grounded in Set and ordinal permanence, does not involve any moves which deviate from fully classical reasoning. 

\newpage

\section{Preliminary overview of some main notions}

\subsection{Introducing Bend codes}

The basic formal language of Visibility Theory is Polish. As explained in \S\ref{Bendcodes andvisions}, Bend codes are recursively assigned to primitive symbol strings by means of the well-ordering $(S,\prec)$ of Definition~\ref{stringset}:~p.~\pageref{stringset}. The coding is defined internally within the formal framework itself, rather than through an external arithmetization as with Gödel numbering.

If $X$ is a formal expression, then so is its Bend code $\Bendcode{X}$; and the latter has, on account of Definition~\ref{bendcodesprimitivestrings}:~p.~\pageref{bendcodesprimitivestrings}, a corresponding \emph{vision} as denotatum.

\subsection{Entering visions, identity and Set}

The notion of \emph{vision} is fundamental in Visibility Theory, and $\mathsf{V}$ is a variable binding operator such that $\mathsf{V}yA$ is the  \emph{vision} of those $y$ that are $A$. By Convention \ref{prefixtoinfixdefinition}:~p.~\pageref{prefixtoinfixdefinition} on the conversion to temporary (and next presentational) form, we write 
$\Anschauung{y}{A}$ for $\mathsf{V}yA.$
Identity is accounted for in \S\ref{identitysection}:~p.~\pageref{identitysection}. The notion of \emph{Set} is introduced in \S\ref{stepidentity}:~p.~\pageref{stepidentity}, by taking vision $\Anschauung{y}{y\in a}$ to have a condition which is \emph{step-identical}. 

\section{Bend codes and visions}\label{Bendcodes andvisions}

In Visibility Theory, the visibility predicate applies to Bend codes
rather than directly to formulas. Accordingly, the assignment of Bend
codes cannot be treated as merely syntactic shorthand. The somewhat intricate coding
apparatus developed in this section is intended to ensure that Bend
codes are compositionally determined and semantically grounded.

\begin{conv}[\textbf{on primitive symbols}:]\label{convention}\leavevmode\par\vspace{2mm} 
    The semantics has formal expressions built up from the \emph{primitive symbols} 
\[
  \bullet,\ddot{v}, \ddot{c}, \downarrow,\forall, \mathsf{V}, \Visible,  \#; 
  \]

which are here listed in ascending lexicographic order.
\EndDef
\end{conv}

		\begin{dfn}[\textbf{of binders, binds, ties and scopes}:]\label{ch:formallanguage:df:metalinguisticconventions:Bindersbindsandties}
			\begin{enumerate}[itemindent =0.3cm, before=\leavevmode, label=(\arabic*)]
            \item $\forall$ and $\mathsf{V}$ are the binders.
				\item  In formula \(\forall v\mathrm{A}\), \(\forall\) is the \texttt{binder} whilst  \(v\) is  the  \texttt{bind} of \(\mathrm{A}\) and the \texttt{tie} of \(\forall\). \(\mathrm{A}\)  is  the  \texttt{scope} of \(\forall\). 
				\item In term \(\mathsf{V} v\mathrm{A}\), \(\mathsf{V}\) is the  \texttt{binder} whilst  \(v\) is  the  \texttt{bind} of \(\mathrm{A}\) and  the \texttt{tie} of \(\mathsf{V}\).  \(\mathrm{A}\) is  the \texttt{scope} of \(\mathsf{V}\). 
			\end{enumerate}
            \EndDef
		\end{dfn}
							
		\begin{dfn}[\textbf{of free and bound variables}:]\label{ch:formallanguage:df:metalinguisticconventions:Freeandboundvariables}
			\begin{enumerate}[itemindent =0.3cm, before=\leavevmode, label=(\arabic*)]
				\item  A \emph{variable} occurrence  in a \emph{formula}, or  \emph{term}, is \textbf{bound}, iff it is a bind, or it is
				in the  scope of a binder with another occurrence as tie. 
				\item \emph{Variable} occurrences  in a \emph{formula}, or \emph{term},  are \emph{free}  if  not bound. 
				\item A \emph{variable} is free in a \emph{formula}, or \emph{term},  just if an occurrence  is. 
				\item A \emph{variable} is bound in a \emph{formula}, or \emph{term},  just if an occurrence is. 
			\end{enumerate}
            \EndDef
		\end{dfn}	
		
\begin{dfn}[\textbf{of the   \emph{Polish} formal expressions}:]\label{Polishformalexpressions}
\begin{enumerate}[before=\leavevmode, label=(\arabic*),ref=   {\ref{Polishformalexpressions}.\arabic*}
]

\item No formal expression is both a variable and a constant.

\item \text{Variables:}
\begin{enumerate}[label=(\alph*), ref={\ref{Polishformalexpressions}.\arabic*.\alph*}
]
    \item $\ddot{v}$ is a variable.
    \item If $x$ is a variable, then $x\bullet$ is a variable.
  \end{enumerate}

\item\label{constants} \text{Constants:}
\begin{enumerate}[label=(\alph*), ref={\ref{Polishformalexpressions}.\arabic*.\alph*}
]
    \item $\ddot{c}$ is a constant.
    \item If $x$ is a constant, then $x\bullet$ is a constant.
    
  \end{enumerate}

\item\label{terms} \text{Terms:}
\begin{enumerate}[label=(\alph*), ref={\ref{Polishformalexpressions}.\arabic*.\alph*}
]
    \item If $x$ is a variable or a constant, then $x$ is a term.
    \item If $x$ is a variable and $B$ is a formula, then $\mathsf{V}x B$ is a term.

  \end{enumerate}


\item \text{Formulas:}
\begin{enumerate}[label=(\alph*), ref={\ref{Polishformalexpressions}.\arabic*.\alph*}
]
    \item If $x$ and $y$ are terms, then $yx$ is a formula.
    \item If $A$ is a formula then $\# A$ is a formula.    
    \item If $A$ is a formula then $\Visible A$ is a formula.
    \item If $B$ and $C$ are formulas, then $\downarrow  B C$ is a formula.
    \item If $x$ is a variable and $A$ is a formula, then $\forall x A$ is a formula.
  \end{enumerate}

\item Sentences: $A$ is a sentence if it is a formula without free variables.  
\end{enumerate}
\EndDef
\end{dfn}

\subsection{The well-ordering of finite primitive strings}

		 $\mathbf{WO}(n)$ denotes the well-ordered list of all strings of length $n$ over the primitive symbols. In the definitional work below, 
		concatenation of strings will be expressed by apposition.
		
		\medskip
		
		\noindent
		$\mathbf{WO}(1)$. Convention~\ref{convention}:~p.~\pageref{convention} provides the appropriate list. 
		
		\medskip
		
		\noindent
		$\mathbf{WO}(n+1)$.
		Suppose the ordering $\mathbf{WO}(n)$ of all strings of length $n$ has been defined, and let
		\[
		s_1, s_2, \ldots, s_l
		\]
		be the strings of $\mathbf{WO}(n)$ in their order. 
        
        Let
		\[
		a_1, a_2, \ldots, a_8
		\]
		be the primitive symbols as listed in the order of Convention~\ref{convention}:~p.~\pageref{convention}. 
		
		$\mathbf{WO}(n+1)$ is the ordering indicated by the blocks
		\[
		a_1 s_1, a_1 s_2, \ldots, a_1 s_l,
		\]
		\[
		a_2 s_1, a_2 s_2, \ldots, a_2 s_l,
		\]
        \[
		a_3 s_1, a_3 s_2, \ldots, a_3 s_l,
		\]
		\[
		a_4 s_1, a_4 s_2, \ldots, a_4 s_l,
		\]
        \[
		a_5 s_1, a_5 s_2, \ldots, a_5 s_l,
		\]
		\[
		a_6 s_1, a_6 s_2, \ldots, a_6 s_l,
		\]
        \[
		a_7 s_1, a_7 s_2, \ldots, a_7 s_l,
		\]
		\[
		a_8 s_1, a_8 s_2, \ldots, a_8 s_l,
		\]

		\medskip

\begin{dfn}[of the set of all finite primitive symbol strings]\label{stringset}

\vspace{2mm}

\[
S \coloneqq \bigcup_{1\le n<\infty} \mathbf{WO}(n).
\]
\EndDef
\end{dfn}

\begin{dfn}[of the well-ordering \(\prec\) on the \(S\) of
Definition~\ref{stringset}:~p.~\pageref{stringset}]\label{precwellordering}\leavevmode\par\vspace{2mm}

For \(s,t\in S\), let

\[
s \prec t \;\coloneqq\;
\begin{cases}
\ell(s) < \ell(t),\\
\text{or}\\
\ell(s)=\ell(t)
\text{ and }
s \text{ precedes } t
\text{ in }
\mathbf{WO}(\ell(s)).
\end{cases}
\]

\EndDef
\end{dfn}

\begin{dfn}[of the $\prec$-successor $\boldsymbol{s^{+}}$ of $\boldsymbol{s}$]
\[
s^{+} \coloneqq \underset{\prec}{\min}\{\,t \mid s \prec t\,\}.
\]

\vspace{3mm}

So $s^{+}$ is the $\prec$-least primitive string strictly succeeding $s$.

\EndDef
\end{dfn}

\subsection{Bend code denotata and presentational strings}

\begin{dfn}[\textbf{on Bend code denotation}]\label{bendcodesprimitivestrings}\leavevmode\par\vspace{2mm} 

To assign semantic values to the Bend codes of primitive symbol strings, we define the operation $\#$ on the set $S$ of Definition~\ref{stringset}:~p~\pageref{stringset} by recursion
on the well-order $(S, \prec)$, with base case $\bullet$ -- the $\prec$-least element of $S$.

\[
\#\bullet=\Anschauung{\ddot{v}}{\bot}, \text{with} \ \bot \  \text{any antilogy, e.g. $p\wedge\lnot p$,}
\]

and for any $s$ in $S,$
\[
\#s^{+}
\;\coloneqq\;
\Anschauung{x}{\#s\in x}.
\]
\EndDef
\end{dfn}

\begin{conv}[\textbf{on the conversion to presentational form}:]\label{prefixtoinfixdefinition}
\begin{enumerate}[before=\leavevmode, label=(\arabic*),ref=   {\ref{prefixtoinfixdefinition}.\arabic*}
]

\item The austere symbol $\Visible$ is retained in presentational form as a monadic predicate.

         \item 
\text{Variables:}\par The symbols $\ddot{v}, \ddot{v}\bullet, \ldots$ are replaced by  $x_1, x_2, x_3$, and so forth.

\item \text{Constants:}\par The symbols $\ddot{c}, \ddot{c}\bullet, \ldots$ are replaced by $a_1, a_2, a_3$, and so forth.

\item $\lnot A_1\coloneqq \;
 \downarrow A_1A_1.$

\item $(A_1\wedge A_2)\coloneqq \;
 \downarrow\downarrow A_1A_1\downarrow A_2A_2.$

\item $(A_1\vee A_2)\coloneqq \;
 \downarrow\downarrow A_1A_2\downarrow A_1A_2.$

\item $(A_1\to A_2)\coloneqq \; \downarrow\downarrow\downarrow A_1A_1A_2\downarrow\downarrow A_1A_1A_2.$

\item $(A_1\leftrightarrow A_2)\coloneqq \; \downarrow \downarrow A_1 \downarrow A_1 A_2 \downarrow A_2 \downarrow A_1 A_2.$

\item $\exists x_1 A_1 \coloneqq \; \lnot\forall x_1 \lnot A_1.$

\item $\Anschauung{x_1}{A_1}\coloneqq \; \mathsf{V}\,x_1\,A_1.$

\item\label{bendgodel} $\Bendcode{A_1}\coloneqq \; \# A_1.$

\item $\Invisible\Bendcode{A_1}\coloneqq \lnot\Visible\Bendcode{\lnot A_1}.$ 

\item $x_1\in x_2\coloneqq \; x_2x_1.$

\end{enumerate} 
\EndDef
\end{conv}

\begin{dfn}[\textbf{of presentational strings for presentational forms}]\label{expressionsurrogates}\leavevmode\par\vspace{2mm}

Presentational strings are strings of primitive symbols 
used to code
the presentational expressions introduced in
Convention~\ref{prefixtoinfixdefinition}:~p.~\pageref{prefixtoinfixdefinition}.
So each presentational string is assigned a
primitive string in \(S\) via the coding below.

All presentational strings begin with \(\bullet\),
so the proper presentational coding begins only past the leading \(\bullet\).

Given the preceding paragraph, no presentational string ends with $\bullet$, so ambiguities are avoided. 

There are no sentence-letter schemas in the presentational strings, and
\(A_1,A_2,A_3,\ldots\) in
Convention~\ref{prefixtoinfixdefinition}:~p.~\pageref{prefixtoinfixdefinition}
are merely metalogical schemas which do not belong to the object language.

\medskip

\noindent\textbf{Firstly}, the presentational map \(\hat{p}\) is specified on the primitive symbols, except $\bullet$ which is taken care of in the compositional contexts wherein it may appear:

\begin{enumerate}[before=\leavevmode, label=(\arabic*),ref=   {\ref{expressionsurrogates}.\arabic*}]

\item \(\hat{p}(\ddot{v})\coloneqq \bullet\ddot{v}\)

\item \(\hat{p}(\ddot{c})\coloneqq \bullet\ddot{c}\)

\item \(\hat{p}(\downarrow)\coloneqq \bullet\downarrow\)

\item \(\hat{p}(\forall)\coloneqq \bullet\forall\)

\item \(\hat{p}(\mathsf{V})\coloneqq \bullet\mathsf{V}\)

\item \(\hat{p}(\Visible)\coloneqq \bullet\Visible\)

\item \(\hat{p}(\#)\coloneqq \bullet\#\)

\end{enumerate}

\bigskip

\noindent\textbf{Secondly}, \(\hat{p}\) is specified on symbols defined for presentational form:
\begin{enumerate}[start=8, before=\leavevmode, label=(\arabic*),ref=   {\ref{expressionsurrogates}.\arabic*}]

\item Presentational formulas contain parentheses, so we stipulate that 
\begin{enumerate}
    \item $\hat{p}(\texttt{(}) \coloneqq \bullet\downarrow\downarrow$
    \item $\hat{p}(\texttt{)}) \coloneqq \bullet\#\#$.
\end{enumerate}

\item $\hat{p}(x_1), \hat{p}(x_2), \ldots$ are $\bullet\ddot{v}, \bullet\ddot{v}\ddot{v}, \ldots$.

\item $\hat{p}(a_1), \hat{p}(a_2), \ldots$ are $\bullet\ddot{c}, \bullet\ddot{c}\ddot{c}, \ldots$.

\item The existential and membership surrogates are:

\begin{enumerate}
\item $\hat{p}(\exists)\coloneqq \bullet\forall\forall$ 
\item $\hat{p}(\in)\coloneqq \bullet\forall\forall\forall$.
\end{enumerate}

\item For \(n\ge1\), let \(n^\bullet\) denote a string of \(n\)
occurrences of \(\bullet\), and let
\[
\sigma_n \coloneqq
\downarrow \bullet n^\bullet \downarrow \#.
\]

Given this, the connective surrogates are:

\[
\begin{array}{rcl}
\text{(a)} & \hat{p}(\lnot)           & \coloneqq \bullet\sigma_1\\
\text{(b)} & \hat{p}(\vee)            & \coloneqq \bullet\sigma_2\\
\text{(c)} & \hat{p}(\wedge)          & \coloneqq \bullet\sigma_3\\
\text{(d)} & \hat{p}(\to)             & \coloneqq \bullet\sigma_4\\
\text{(e)} & \hat{p}(\leftrightarrow) & \coloneqq \bullet\sigma_5.
\end{array}
\]
\end{enumerate}

\noindent\textbf{Thirdly}, \(\hat{p}\) is specified recursively on the composition of presentational strings:

\setcounter{equation}{12}

\begin{equation}
\hat{p}(\downarrow \mathfrak{X}\mathfrak{Y})
\coloneqq
\hat{p}(\downarrow)\hat{p}(\mathfrak{X})\hat{p}(\mathfrak{Y}).    
\end{equation}

\begin{equation}
    \hat{p}(\Bendcode{\mathfrak{X}})
\coloneqq
\hat{p}(\#)\hat{p}(\mathfrak{X}).
\end{equation}

\begin{equation}
    \hat{p}(\Visible\mathfrak{X})
\coloneqq
\hat{p}(\Visible)\hat{p}(\mathfrak{X}).
\end{equation}

\begin{equation}
    \hat{p}(\Anschauung{x_n}{\mathfrak{X}})
\coloneqq
\hat{p}(\mathsf{V})\hat{p}(x_n)\hat{p}(\mathfrak{X}).
\end{equation}

\noindent\textbf{Fourthly}, the remaining presentational operators are treated
definitionally through
Convention~\ref{prefixtoinfixdefinition}:~p.~\pageref{prefixtoinfixdefinition}.

\medskip

\noindent\textbf{Fifthly}, let the recursive specification of $\hat{p}$ in \textbf{Thirdly} be extended to the remaining presentational operators introduced definitionally in Convention~\ref{prefixtoinfixdefinition}.

\medskip

\noindent\textbf{Sixthly},
\(\hat P\) is the least collection of strings in \(S\) containing the
primitive symbols of
Convention~\ref{convention}:~p.~\pageref{convention} and closed under the 
clauses defining \(\hat p\).

\EndDef
\end{dfn}

\subsection{Substitution and Diagonal Lemma}

\begin{dfn}[\textbf{of substitution on presentational strings}]
\label{presentationalsubstitution}
\leavevmode\par\vspace{2mm}

Assume that:

\begin{itemize}
\item \(x_n\) is a variable symbol,
\item \(\mathfrak X\) and \(\mathfrak Y\) are presentational strings,
\item and \(\mathfrak X(x_n)\) is a presentational string in which \(x_n\) occurs freely.
\end{itemize}

The substitution operation
\[
\textbf{\textsc{sub}}(\mathfrak X,x_n,\mathfrak Y)
\]
is recursively specified as follows.

\medskip

\noindent\textbf{Firstly}, variables substitute into themselves:

\setcounter{equation}{0}
\begin{equation}
\textbf{\textsc{sub}}(x_n,x_n,\mathfrak Y)
\coloneqq
\mathfrak Y.
\end{equation}

\begin{equation}
\textbf{\textsc{sub}}(x_m,x_n,\mathfrak Y)
\coloneqq
x_m
\qquad
(m\neq n).
\end{equation}

\medskip

\noindent\textbf{Secondly}, substitution propagates through primitive composition.

\begin{equation}
\textbf{\textsc{sub}}(\downarrow\mathfrak X\mathfrak Z,x_n,\mathfrak Y)
\coloneqq
\downarrow
\textbf{\textsc{sub}}(\mathfrak X,x_n,\mathfrak Y)
\textbf{\textsc{sub}}(\mathfrak Z,x_n,\mathfrak Y).
\end{equation}

\begin{equation}
\textbf{\textsc{sub}}(\Bendcode{\mathfrak X},x_n,\mathfrak Y)
\coloneqq
\Bendcode{
\textbf{\textsc{sub}}(\mathfrak X,x_n,\mathfrak Y)
}.
\end{equation}

\begin{equation}
\textbf{\textsc{sub}}(\Visible\mathfrak X,x_n,\mathfrak Y)
\coloneqq
\Visible
\textbf{\textsc{sub}}(\mathfrak X,x_n,\mathfrak Y).
\end{equation}

\medskip

\noindent\textbf{Thirdly}, bound variables block substitution.

\begin{equation}
\textbf{\textsc{sub}}(\Anschauung{x_n}{\mathfrak X},x_n,\mathfrak Y)
\coloneqq
\Anschauung{x_n}{\mathfrak X}.
\end{equation}

\begin{equation}
\textbf{\textsc{sub}}(\Anschauung{x_m}{\mathfrak X},x_n,\mathfrak Y)
\coloneqq
\Anschauung{x_m}{
\textbf{\textsc{sub}}(\mathfrak X,x_n,\mathfrak Y)
}
\qquad
(m\neq n).
\end{equation}

\medskip

\noindent\textbf{Fourthly}, substitution extends definitionally to the remaining presentational operators introduced in
Convention~\ref{prefixtoinfixdefinition}.

\EndDef
\end{dfn}

\begin{substitutiontheorem}[\textbf{for presentational strings}]
\label{presentationalsubstitutiontheorem}

For any presentational string
\(\mathfrak X(x_n)\)
and any presentational string
\(\mathfrak Y\),
\[
\hat p
\Bigl(
\textbf{\textsc{sub}}(\mathfrak X(x_n),x_n,\mathfrak Y)
\Bigr)
\]
is the presentational coding of the result of substituting
\(\mathfrak Y\)
for all free occurrences of
\(x_n\)
in
\(\mathfrak X(x_n)\).

\end{substitutiontheorem}

\begin{proof}
By induction on the recursive specification given in
Definition~\ref{presentationalsubstitution}.

The variable clauses establish the basis step.

The recursive clauses for
\(\downarrow\),
\(\Bendcode{\cdot}\),
\(\Visible\),
and
\(\Anschauung{\cdot}{\cdot}\)
preserve the substitution property inductively.

The remaining operators are handled through definitional extension.
\end{proof}

\begin{dfn}[\textbf{of diagonalization}]
\label{presentationaldiagonalization}

Suppose that
\(\mathfrak X(x_n)\)
is a presentational string with precisely the indicated variable free.

Define:
\[
\mathrm{Diag}(\mathfrak X)
\coloneqq
\textbf{\textsc{sub}}
\Bigl(
\mathfrak X(x_n),
x_n,
\Bendcode{\mathfrak X(x_n)}
\Bigr).
\]

\EndDef
\end{dfn}

\begin{thm}[as \textbf{The Diagonal Lemma}]
\label{presentationaldiagonallemma}

If
\(\mathfrak A(x_n)\)
has precisely the indicated variable free,
then there exists a sentence
\(\mathfrak D\)
such that

\[
\Vdash^{\mathrm M}
\mathfrak D
\leftrightarrow
\mathfrak A(\Bendcode{\mathfrak D}).
\]

\end{thm}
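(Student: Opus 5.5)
I would follow the standard Gödel--Carnap construction, with the diagonalization of Definition~\ref{presentationaldiagonalization} internalized through visions. Everything is carried out in the intended model $\mathrm M$ that $\Vdash^{\mathrm M}$ refers to. The key feature of the framework is that the Bend code $\Bendcode{X}$ of an expression is itself a closed term. By Definition~\ref{bendcodesprimitivestrings} it denotes a definite vision, obtained by recursion along $(S,\prec)$. So no external arithmetization is needed, only a term-level representation of the map $\mathfrak X\mapsto \Bendcode{\mathrm{Diag}(\mathfrak X)}$.

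The first step is to represent diagonalization by a formula $\mathrm{Dg}(x_m,x_k)$ with two free variables. For every presentational string $\mathfrak X(x_n)$ it should satisfy
\[
\Vdash^{\mathrm M} \forall x_k\bigl(\mathrm{Dg}(\Bendcode{\mathfrak X(x_n)},x_k)\leftrightarrow x_k=\Bendcode{\mathrm{Diag}(\mathfrak X)}\bigr).
\]
To build it, I would use the recursive clauses of $\hat p$ (Definition~\ref{expressionsurrogates}) and of $\textbf{\textsc{sub}}$ (Definition~\ref{presentationalsubstitution}), expressed over the visions $\#s$. Each vision $\#s^{+}=\Anschauung{x}{\#s\in x}$ is determined by its predecessor, so the successor relation on codes is definable by a formula. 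Concatenation and the substitution recursion are then definable by the usual ``there is a finite computation sequence'' device, coded by visions. The Substitution Theorem~\ref{presentationalsubstitutiontheorem} guarantees that the defined relation tracks $\hat p(\textbf{\textsc{sub}}(\cdot))$ correctly.

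Next I put $\mathfrak B(x_n)\coloneqq \exists x_k(\mathrm{Dg}(x_n,x_k)\wedge \mathfrak A(x_k))$, which has exactly $x_n$ free. I take $\mathfrak D\coloneqq \mathrm{Diag}(\mathfrak B)$, that is, $\exists x_k(\mathrm{Dg}(\Bendcode{\mathfrak B(x_n)},x_k)\wedge\mathfrak A(x_k))$, which is a sentence. By the representation property, $\mathrm{Dg}(\Bendcode{\mathfrak B},x_k)$ holds in $\mathrm M$ exactly when $x_k=\Bendcode{\mathrm{Diag}(\mathfrak B)}=\Bendcode{\mathfrak D}$. Classical reasoning with identity (from \S\ref{identitysection}) then gives $\mathfrak D\leftrightarrow\mathfrak A(\Bendcode{\mathfrak D})$ in $\mathrm M$.

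An alternative avoids representability altogether. Since the syntax contains $\Bendcode{\cdot}$ as a term-forming operation, one could try to let $\mathfrak A$ be applied to the term $\Bendcode{\mathfrak X(\Bendcode{\mathfrak X})}$ directly. However, this would not yield a fixed point without a quining term, so I would keep the representation route.

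The main obstacle is the representation step: showing that $\mathrm{Dg}$ exists and that $\mathrm M$ correctly evaluates identities between Bend code visions. In particular, distinct strings must receive distinct visions, i.e.\ $\#$ must be injective on $S$. Without this, the equation $x_k=\Bendcode{\mathfrak D}$ would not single out $\mathfrak D$. I would first prove injectivity by $\prec$-induction, using extensional identity of visions in $\mathrm M$ and the fact that $\#\bullet$ is empty while every later $\#s^{+}$ is nonempty. After that, I would check that the finite-sequence coding of the $\textbf{\textsc{sub}}$ recursion is available in the set-like part of $\mathrm M$.
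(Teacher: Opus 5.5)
Your route is genuinely different from the paper's, and your reason for taking it is sound. The paper sets $\mathfrak D\coloneqq\mathrm{Diag}(\mathfrak A)$ and asserts, via the Substitution Theorem, that $\mathfrak D$ literally \emph{is} $\mathfrak A(\Bendcode{\mathfrak D})$, so that the biconditional is trivial. That is exactly the shortcut you reject, and you are right to reject it. The Substitution Theorem only gives $\mathfrak D=\mathfrak A(\Bendcode{\mathfrak A(x_n)})$. But $\mathfrak A(\Bendcode{\mathfrak D})$ contains $\mathfrak D$ as a proper substring, so it cannot coincide with $\mathfrak D$; nor do $\Bendcode{\mathfrak A(x_n)}$ and $\Bendcode{\mathfrak D}$ code the same string. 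A G\"odel--Carnap construction of the kind you outline is therefore what the statement actually needs.

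The gap is where you locate it, but it is more than a routine check: you assert that $\mathrm{Dg}$ exists without constructing it, and the device you invoke is not available off the shelf here.

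(i) The codes $\#s^{+}=\Anschauung{x}{\#s\in x}$ give you only a successor on positions in $(S,\prec)$. Since $\prec$ is length-lexicographic over $8$ symbols, concatenation and \textbf{\textsc{sub}} on codes amount to base-$8$ arithmetic on positions. You therefore need addition, multiplication and a finite-sequence coding for this numeral system. The paper supplies none of these: its arithmetic in \S\ref{Arithmetic} is built on $V_{\mathbb N}$ with the different successor $a'=\Anschauung{x}{x=a\vee x\in a}$.

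(ii) More seriously, visions do not obey naive comprehension. By \ref{ax:VCA}, $x\in\Anschauung{y}{A(y)}$ amounts to $\Visible\Bendcode{A(x)}$, which is fixed by the revision clause. So a vision meant to code a computation sequence, or the class of all codes, has exactly its intended members only once you prove it orthodox and stage-stable, as in the argument for Theorem~\ref{omegaisorthodox}. Moreover, if $\Vdash^{\mathrm M}$ is to hold at every stage, including the arbitrarily chosen stage $0$, the representation property must itself be visibly derivable so that Postulate~\ref{semanticcorrelation} protects it.

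(iii) Identity here is co-essentiality (Definition~\ref{identitydefinition}), not extensionality. So the injectivity of $\#$ that makes $\mathrm{Dg}$ well defined on codes requires separating visions $u$ with $\#s\in u$ but $\#t\notin u$, again mediated by $\Visible$. Your final step also needs substitutivity of co-essential identity (Theorem~\ref{fungibility}) to be sound in $\mathrm M$.

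(iv) Calling $\mathrm{Diag}(\mathfrak B)$ a sentence presupposes that $\Bendcode{\cdot}$ is an opaque closed term. Under the paper's definition of free variables, only $\forall$ and $\mathsf V$ bind, and the Bend-code clause of Definition~\ref{presentationalsubstitution} pushes substitution inside codes. So the $x_n$ inside $\Bendcode{\mathfrak B(x_n)}$ is free, and you must adopt the opaque reading explicitly.

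Until $\mathrm{Dg}$ is actually built and verified under these constraints, what you have is the right architecture rather than a proof.
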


\begin{proof}

Let

\[
\mathfrak D
\coloneqq
\mathrm{Diag}(\mathfrak A).
\]

By Definition of Diag, D is obtained by substituting its own Bend code into the free variable position of A.

By Definition~\ref{presentationaldiagonalization},

\[
\mathfrak D
=
\textbf{\textsc{sub}}
\Bigl(
\mathfrak A(x_n),
x_n,
\Bendcode{\mathfrak A(x_n)}
\Bigr).
\]

Hence, by the Substitution Theorem,

\[
\mathfrak D
=
\mathfrak A(\Bendcode{\mathfrak D}).
\]

Therefore,

\[
\Vdash^{\mathrm M}
\mathfrak D
\leftrightarrow
\mathfrak A(\Bendcode{\mathfrak D}).
\]

\end{proof}

\newpage

\section{Visibility theory}

\medskip
\bigskip

\setlength\epigraphwidth{0.53\textwidth}
\setlength\epigraphrule{0pt}

\epigraph{\emph{Pace} Frege, some theorems are not true.}{}

\noindent From this point on, the language of \(\mathrm{VT}\) is 
the presentational strings in \(\hat P\). Additional presentational operators and formulas are introduced definitionally.

The visibility operator $\Visible$ is central to Visibility Theory, $\mathrm{VT}$, and it  applies to the Bend codes of formulas rather than to the formulas themselves. $\Visible\Bendcode{A}$ expresses that $\Bendcode{A}$ is
\emph{visible}.

\subsection{On \texorpdfstring{$\Vdash^\alpha A$}{vdashalpha A} and the revision semantics for \texorpdfstring{$\Vdash^\beta \Visible\Bendcode{A}$}{vdashbeta !(A)}}\label{revisionsemantics}

The revision process $\Vdash$ proceeds along ordinal-indexed stages, where
$\Vdash^\alpha A$ indicates that $A$ holds at stage $\alpha$. 

Semantically,
\[
\Vdash^\beta \Visible\Bendcode{A},
\]

for $\beta\succ 0$,
holds just if there exists a stage $\gamma<\beta$ such that
\[
\Vdash^\delta A
\]
for every stage $\delta$ with
\[
\gamma\le\delta<\beta.
\]

Any $\Vdash^0$ is a chosen maximal consistent set of formulas.

\subsection{The closure ordinal for visibility}\label{subsection:closureordinal}

There is a closure ordinal $\kappa^{\ast}$ (cf.~\parencite{Herzberger1980, Gupta1982}, and also the exposition in \parencite[20-22]{Bjordal2025arxiv} which is more adapted to our account here) such that $\kappa^{\ast}$ \emph{covers} the revision process $\Vdash$ in the sense that

\[
\Sigma \alpha\, \Pi \beta \succeq \alpha\, \Vdash^\beta \Visible\Bendcode{A}
\;\Rightarrow\;
\Pi \beta \succeq \kappa^{\ast}\, \Vdash^\beta \Visible\Bendcode{A},
\]

\bigskip

and 
\emph{stabilizes} the semantic process in the sense  that

\[
\Vdash^{\kappa^{\ast}}\Visible\Bendcode{A} \Rightarrow \ \Sigma \alpha\Pi\beta \succeq \alpha \Vdash^\beta \Visible\Bendcode{A}.
\]

\subsection{Axiomatics of \texorpdfstring{$\mathbf{VT}$}{VT}}

The theory of visibility shares important structural features with certain
axiomatic theories of truth; more precisely, under the translation
$\Visible\Bendcode{A} \mapsto \mathsf{T}\ulcorner A\urcorner$ it extends Cantini’s
system $\mathrm{LES}$ \parencite[400]{Cantini1996}.

\subsubsection{The presence of two invisible axiom  schemas}
\label{subsubsection:InvisibleA1-2}

\begin{enumerate}[label=\textbf{(IA\arabic*)}, leftmargin=*, labelindent=3em]
\setlength{\itemsep}{0.6em}

\item \label{ax:IA1}
$\vdash\Visible\Bendcode{A} \to A$. (The Reflexive Axiom)
\item \label{ax:IA2}
$\vdash \forall x\,\Visible\Bendcode{A(x)} \;\to\; \Visible\Bendcode{\forall x A(x)}$

\end{enumerate}

 A schema $B$ is \emph{present} just if $\vdash B$, and \emph{visible} just if
$\vdash\Visible\Bendcode{B}$. By Axiom~\ref{ax:IA1}:~p.~\pageref{ax:IA1}, all visible
theorems are present.

\paragraph{The presence of the Reflexive Axiom \textbf{(IA1)}.}
Suppose $\Vdash^{\kappa^{\ast}} \Visible\Bendcode{A}$. By covering and stabilization
(cf.~\S\ref{subsection:closureordinal}:~p.~\pageref{subsection:closureordinal}), $\Vdash^{\kappa^{\ast}+1}\Visible\Bendcode{A}$.
By the semantics of visibility,
$\Vdash^{\kappa^{\ast}+1}\Visible\Bendcode{A}\Leftrightarrow \ \Vdash^{\kappa^{\ast}}A$. So $\Vdash^{\kappa^{\ast}} \Visible\Bendcode{A}\to A$. 

\paragraph{The presence of invisible axiom schema \textbf{(IA2)}.}
If $\Vdash^{\kappa^{\ast}}\forall x\Visible\Bendcode{A(x)}$, then by the semantics of quantifiers  $\Vdash^{\kappa^{\ast}}\Visible\Bendcode{A(t)}$
for each individual term $t$. By the semantics of visibility (cf.~ \S\ref{revisionsemantics}) and the closure properties of $\kappa^{\ast}$,
there is a  $\beta<\kappa^{\ast}$ from which all instances $A(t)$
hold, and so $\Vdash^\beta\forall xA(x)$.
By the semantics of visibility,
$\Vdash^{\kappa^{\ast}}\Visible\Bendcode{\forall xA(x)}$.

\paragraph{The invisibility of \textbf{(IA1)}} is established in
Theorem~\ref{thm:invisibilityIA1}:~p.~\pageref{thm:invisibilityIA1}.

 \paragraph{The invisibility of axiom schema \textbf{(IA2)}} is a consequence of a
paradox first published in \parencite{McGee1985}, now commonly known as
\emph{McGee’s paradox}. The derivation of it requires arithmetic and
is not included here. For a derivation which, in effect, amounts to
the invisibility of \textbf{(IA2)}, see \parencite[\S~65.\ \textnormal{An inconsistency},
pp.~380~ff.]{Cantini1996}.

\subsubsection{The Visible Comprehension Axiom}

\bigskip

\begin{enumerate}[
label=\textbf{(VCA)},
ref=\textbf{(VCA)},
leftmargin=*,
labelindent=3em
]
\item\label{ax:VCA}
\(\displaystyle
\vdash
\Visible\Bendcode{
\forall x\bigl(
x \in \Anschauung{y}{A(y)}
\leftrightarrow
\Visible\Bendcode{A(x)}
\bigr)
}.
\)
\end{enumerate}

\subsubsection{Orthodox visions and sentences}
\label{orthodoxvisionsandsentences}

\begin{dfn}[of orthodox visions and sentences]\label{orthodoxydefinition}\leavevmode\par\vspace{2mm}

Vision $a$ is orthodox iff

\[
\Visible\Bendcode{\forall x(x\in a\vee x\notin a)}.
\]

Sentence $A$ is orthodox iff 

\[
\Visible\Bendcode{\Visible\Bendcode{A}\vee\Visible\Bendcode{\lnot A}}.
\]
\end{dfn}

\subsubsection{The visible axiom schemas \textbf{(VA1-9)}}

The visible axiom schemas \textbf{(VA1-9)} are stage-invariant, i.e. invariant across ordinal stages, and this constraint follows from Postulate~\ref{semanticcorrelation}:~p.~\pageref{semanticcorrelation}.

\label{subsubsection:visibility-axioms-visible}

\begin{enumerate}[label=\textbf{(VA\arabic*)}, leftmargin=*, labelindent=3em]
\setlength{\itemsep}{0.6em}

\item \label{ax:V1}
$\vdash \Visible\Bendcode{A}$, provided that $A$ is a theorem of classical logic.\footnote{%
Here it is presupposed that $A$ is a theorem of classical logic only if
$\forall x\,A$ is so as well.}

\item \label{ax:V2}
$\vdash \Visible\Bendcode{\exists x\,\Visible\Bendcode{A} \to \Visible\Bendcode{\exists x A}}$.

\item \label{ax:V3}
$\vdash \Visible\Bendcode{\Visible\Bendcode{\forall x A} \to \forall x\,\Visible\Bendcode{A}}$.\footnote{\label{barcanfootnote}
The converse-Barcan-style quantifier-shift principle
\[
\vdash\Visible\Bendcode{\forall x A(x)}
\to
\forall x\Visible\Bendcode{A(x)}
\]
is visibly derivable in $\mathbf{VT}$ by standard modal-logical methods.
}

\item \label{ax:V4}
$\vdash \Visible\Bendcode{\Visible\Bendcode{\Visible\Bendcode{A \to B} \to
  (\Visible\Bendcode{A} \to \Visible\Bendcode{B})}}$.

\item \label{ax:V5}
$\vdash \Visible\Bendcode{\Visible\Bendcode{A} \to \lnot \Visible\Bendcode{\lnot A}}$.

\item \label{ax:V6}
$\vdash
\Visible\Bendcode{\Visible\Bendcode{\Visible\Bendcode{A} \to A}
\to
(\Visible\Bendcode{A} \vee \Visible\Bendcode{\lnot A})}$.

\item \label{ax:V7}
$\vdash
\Visible\Bendcode{\Visible\Bendcode{\Visible\Bendcode{A} \to \Visible\Bendcode{\Visible\Bendcode{A}}}
\to
(\Visible\Bendcode{A} \vee \Visible\Bendcode{\lnot A})}$.

\item \label{ax:V8}
$\vdash
\Visible\Bendcode{
(\Visible\Bendcode{A} \vee \Visible\Bendcode{\lnot A})
\vee
(\Visible\Bendcode{\lnot\Visible\Bendcode{\lnot B}} \to \Visible\Bendcode{B})}$.

\item \label{ax:V9}
$\vdash
\Visible\Bendcode{
(\Visible\Bendcode{A} \vee \Visible\Bendcode{\lnot A})
\vee
(\Visible\Bendcode{B} \to \Visible\Bendcode{\Visible\Bendcode{B}})}$.

\end{enumerate}

No semantic justification is offered for the visible axioms
here. Their role is analogous to that of core axioms in axiomatic theories of
truth. 

\subsubsection{The invisible theorem schemas \textbf{(IT1-2)}}\label{Twoinvisibletheoremsofvisibilitylogic}

\medskip

\begin{enumerate}[label=\textbf{(IT\arabic*)}, leftmargin=*, labelindent=3em]
\setlength{\itemsep}{0.6em}

\item\label{thm:IT1} $\vdash
\Visible\Bendcode{\lnot\Visible\Bendcode{\lnot A}} \to \Visible\Bendcode{A}$ 

\item\label{thm:IT2} $\vdash
\Visible\Bendcode{A} \to \Visible\Bendcode{\Visible\Bendcode{A}}$
\end{enumerate}

\noindent will be proven in \S\ref{derivationtoinvisibles}:~p.~\pageref{derivationtoinvisibles}, and their invisibility  demonstrated in the proofs of Theorems \ref{invisibilityIT1} and \ref{invisibilityIT2}:~p.~\pageref{invisibilityIT2}.

\subsubsection{Proofs of the theorems of \ref{Twoinvisibletheoremsofvisibilitylogic}}\label{derivationtoinvisibles}

It follows from  appropriate instances of \ref{ax:IA1}:~p.~\pageref{ax:IA1}, \ref{ax:V8}:~p.~\pageref{ax:V8} and \ref{ax:V9}:~p.~\pageref{ax:V9}   that

\[
\vdash
\Visible\Bendcode{\mathrm{V}} \vee \Visible\Bendcode{\lnot \mathrm{V}}
\vee
(\Visible\Bendcode{\lnot\Visible\Bendcode{\lnot B}} \to \Visible\Bendcode{B}).
\]

\[
\vdash
\Visible\Bendcode{\mathrm{V}} \vee \Visible\Bendcode{\lnot \mathrm{V}}
\vee
(\Visible\Bendcode{B} \to \Visible\Bendcode{\Visible\Bendcode{B}}).
\]
    
Given the results $\vdash\lnot\Visible\Bendcode{\mathrm{V}}$ and $\lnot\Visible\Bendcode{\lnot \mathrm{V}}$ of \S\ref{invisibleresolution},  Theorems \ref{thm:IT1} and \ref{thm:IT2} follow by adjunction and modus tollens.

\noindent 

\subsubsection{The semantic match}\label{semanticmatch}

\begin{postulate}[of semantic counterparts for axioms:]\label{semanticcorrelation}\leavevmode\par\vspace{2mm}

Every visible axiom $\vdash\Visible\Bendcode{A}$
 has $\Pi \alpha\bigl(\Vdash^\alpha A\bigr)$ as its semantical correlate.    
\end{postulate}

\begin{remark}
The semantics justifies Axioms \textbf{(VA1-9)} for $\alpha \succ 0$. But in the case $\alpha=0$ the postulate amounts to a restriction on the set of formulas admissible at stage~0.  
\end{remark}

\subsection{Visionary paradoxes}

\subsubsection{From diagonalization}\label{invisibleresolution}

Let $A(x)$ in Theorem \ref{presentationaldiagonallemma}
be $\lnot \Visible(x)$.
It follows, from Theorem \ref{presentationaldiagonallemma}, that there is a sentence $\mathrm{V}$ such that we have:

\begin{cor}[\emph{The Diagonal Visionary Paradox}]\label{visibilityparadoxequation}    
\begin{equation*}
    \vdash \mathrm{V}\leftrightarrow \lnot\Visible\Bendcode{\mathrm{V}}.
\end{equation*}
\end{cor}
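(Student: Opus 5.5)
The plan is to instantiate the Diagonal Lemma (Theorem~\ref{presentationaldiagonallemma}) with the presentational string $\mathfrak A(x_1)\coloneqq\lnot\Visible x_1$, and then pass from the semantic consequence relation $\Vdash^{\mathrm M}$ of that theorem to the derivability relation $\vdash$ of \textbf{VT}.

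First, I check the hypothesis of Theorem~\ref{presentationaldiagonallemma}. Here $\lnot\Visible x_1$ abbreviates $\downarrow\Visible x_1\Visible x_1$ by Convention~\ref{prefixtoinfixdefinition}. It is a presentational string in $\hat P$, and $x_1$ is its only free variable. So Definition~\ref{presentationaldiagonalization} applies, and I set
\[
\mathrm{V}\coloneqq \mathrm{Diag}(\lnot\Visible x_1)=\textbf{\textsc{sub}}\bigl(\lnot\Visible x_1,x_1,\Bendcode{\lnot\Visible x_1}\bigr).
\]
By the clauses of Definition~\ref{presentationalsubstitution} for $\downarrow$ and $\Visible$, this is $\lnot\Visible\Bendcode{\lnot\Visible x_1}$. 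It is a sentence, since the only free occurrence of $x_1$ now sits inside a Bend code, and Bend codes are closed terms.

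Second, I invoke the Diagonal Lemma together with the Substitution Theorem, exactly as in the proof of Theorem~\ref{presentationaldiagonallemma}. This gives $\mathrm{V}=\lnot\Visible\Bendcode{\mathrm{V}}$, and hence $\Vdash^{\mathrm M}\mathrm{V}\leftrightarrow\lnot\Visible\Bendcode{\mathrm{V}}$. To get $\vdash$, I argue as follows. The lemma's identity is an identity of presentational strings, with the right-hand side literally the same string as $\mathrm{V}$. So the biconditional has the form $B\leftrightarrow B$, which is a theorem of classical logic. Axiom~\ref{ax:V1} then yields $\vdash\Visible\Bendcode{\mathrm{V}\leftrightarrow\lnot\Visible\Bendcode{\mathrm{V}}}$, and Axiom~\ref{ax:IA1} detaches $\vdash\mathrm{V}\leftrightarrow\lnot\Visible\Bendcode{\mathrm{V}}$. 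Alternatively, the underlying classical logic proves $B\leftrightarrow B$ directly.

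The main obstacle is the identity $\mathrm{V}=\lnot\Visible\Bendcode{\mathrm{V}}$ itself. Read literally, $\mathrm{V}$ is $\lnot\Visible\Bendcode{\lnot\Visible x_1}$, while $\lnot\Visible\Bendcode{\mathrm{V}}$ is $\lnot\Visible\Bendcode{\lnot\Visible\Bendcode{\lnot\Visible x_1}}$. These are different strings. The Substitution Theorem as stated only guarantees that substitution is faithful; it does not supply the self-reference on its own.

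My first attempt would be to rely on the Diagonal Lemma as stated, since it is an earlier result I may assume, and simply cite its conclusion. If a genuine justification is required, I would repair the argument in the standard way. I would define a term-level diagonal function, or a vision $d$ representing $\mathrm{Diag}$ on Bend codes, using \ref{ax:VCA}. I would then diagonalize the formula $\lnot\Visible(d(x_1))$ rather than $\lnot\Visible x_1$. This yields a $\mathrm{V}$ for which $\Bendcode{\mathrm{V}}$ and $d(\Bendcode{\lnot\Visible d(x_1)})$ are provably co-denoting. The biconditional then follows by a substitutivity-of-identicals step, as in \S\ref{identitysection}.
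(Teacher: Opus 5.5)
Your primary route, instantiating Theorem~\ref{presentationaldiagonallemma} with $\lnot\Visible x_1$, is exactly the paper's. The paper's proof of the corollary is just that citation, and it passes silently from the lemma's $\Vdash^{\mathrm M}$ to $\vdash$. The step you add to bridge $\Vdash^{\mathrm M}$ and $\vdash$ does not work. Reading the biconditional as an instance of $B\leftrightarrow B$ presupposes the string identity $\mathrm V=\lnot\Visible\Bendcode{\mathrm V}$. As you yourself observe a paragraph later, that identity is false. In fact no string satisfies it, since $\lnot\Visible\Bendcode{\mathrm V}$ contains $\mathrm V$ as a proper part. So the detour through \ref{ax:V1} and \ref{ax:IA1} proves nothing. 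Relying on the lemma as stated simply inherits the defect you correctly located in its proof: $\mathrm{Diag}(\mathfrak A)$ is $\mathfrak A(\Bendcode{\mathfrak A(x_n)})$, not $\mathfrak A(\Bendcode{\mathfrak D})$.

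Your claim that $\lnot\Visible\Bendcode{\lnot\Visible x_1}$ is a sentence because Bend codes are closed terms also conflicts with the paper's conventions, for three reasons:
\begin{itemize}
\item only $\forall$ and $\mathsf V$ are binders (Definitions~\ref{ch:formallanguage:df:metalinguisticconventions:Bindersbindsandties} and~\ref{ch:formallanguage:df:metalinguisticconventions:Freeandboundvariables});
\item $\#A$ is listed as a formula in Definition~\ref{Polishformalexpressions};
\item the Bend-code clause of Definition~\ref{presentationalsubstitution} lets substitution pass inside $\Bendcode{\cdot}$.
\end{itemize}
So $x_1$ is still free there.

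Your fallback, a vision $d$ representing $\mathrm{Diag}$ on Bend codes, names the standard missing idea, but it is only a sketch, and each step is substantial in \textbf{VT}:
\begin{itemize}
\item There are no function symbols. So $d(x_1)$ must be rendered by a formula or a parametrized vision term, and its co-essentiality (Definition~\ref{identitydefinition}) with $\Bendcode{\mathrm{Diag}(\mathfrak X)}$ has to be proved.
\item \ref{ax:VCA} yields membership only up to $\Visible$, so the relevant graph must first be shown orthodox (Definition~\ref{orthodoxydefinition}).
\item \textbf{VT} would have to prove facts about the coding itself (the $\prec$-recursion of Definition~\ref{bendcodesprimitivestrings}, concatenation, $\hat p$), which the paper defines only metatheoretically.
\end{itemize}

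A much cheaper repair exploits exactly the transparency of Bend codes just noted. Put $W=\Anschauung{y}{\lnot y\in y}$ and $\mathrm V\coloneqq\lnot W\in W$. By \ref{ax:VCA} and \ref{ax:IA1}, $\vdash\forall x(x\in W\leftrightarrow\Visible\Bendcode{\lnot x\in x})$. Instantiating at the closed term $W$ reaches inside the Bend code, giving $\vdash W\in W\leftrightarrow\Visible\Bendcode{\lnot W\in W}$. Negating both sides gives $\vdash\mathrm V\leftrightarrow\lnot\Visible\Bendcode{\mathrm V}$; this is the derivation of \S\ref{invisibleresolutionfromvisiblecomprehension}. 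Because \ref{ax:VCA} is itself visible, \ref{ax:V1} and \ref{ax:V4} (detached via \ref{ax:IA1}) even give $\vdash\Visible\Bendcode{\mathrm V\leftrightarrow\lnot\Visible\Bendcode{\mathrm V}}$. The later invisibility theorems tacitly need that visible form. This establishes the corollary directly in $\vdash$, with no appeal to $\Vdash^{\mathrm M}$ or to an impossible string identity.
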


As an instance of invisible axiom \ref{ax:IA1}:~p.~\pageref{ax:IA1} is 
\(\vdash \Visible\Bendcode{\lnot \mathrm{V}}\to\lnot \mathrm{V}\), so it follows that $\vdash\lnot\Visible\Bendcode{\mathrm{V}}$. Next, $\vdash \mathrm{V}$ follows by using Corollary \ref{visibilityparadoxequation}; so by  axiom \ref{ax:IA1}:~p.~\pageref{ax:IA1} and \emph{modus tollens}, $\vdash\lnot\Visible\Bendcode{\lnot \mathrm{V}}$. So $\vdash \mathrm{V}\wedge\lnot\Visible\Bendcode{\mathrm{V}}\wedge\lnot\Visible\Bendcode{\lnot \mathrm{V}}$.

\subsubsection{From visible Comprehension}\label{invisibleresolutionfromvisiblecomprehension}

First use Axiom \ref{ax:IA1}:~p.~\pageref{ax:IA1} on Axiom 
\ref{ax:VCA}:~p.~\pageref{ax:VCA} to obtain 

\[
\vdash \forall x\bigl(x \in \Anschauung{y}{A(y)} \leftrightarrow
\Visible\Bendcode{A(x)}\bigr),
\]

and next instantiate with $\mathrm{W}=\Anschauung{y}{\lnot y\in y}$ to obtain 
\[
\vdash \mathrm{W} \in \mathrm{W} \leftrightarrow \Visible\Bendcode{\lnot \mathrm{W} \in \mathrm{W}}
\]

and consequently 

\[
\vdash \lnot\mathrm{W} \in \mathrm{W} \leftrightarrow \lnot\Visible\Bendcode{\lnot \mathrm{W} \in \mathrm{W}}.
\]

Let $A$ abbreviate $\lnot\mathrm{W}\in\mathrm{W}.$ Paradoxicality, in the sense of the title page's epigram, again follows as $\vdash A\wedge \lnot\Visible\Bendcode{A}\wedge  \lnot\Visible\Bendcode{\lnot A}$. 

\subsubsection{Invisibility of Axiom \ref{ax:IA1}\ \text{and} \ Theorems \ref{thm:IT1} and \ref{thm:IT2}}

\begin{lem}[of the Planer]\label{planer}
\[\vdash \Visible\Bendcode{\Visible\Bendcode{A}\to \Visible\Bendcode{B}} \to \Visible\Bendcode{A\to B}.\]    
\end{lem}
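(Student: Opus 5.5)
The plan is a case analysis on whether $A$ is \emph{decided}, that is, whether $\Visible\Bendcode{A}\vee\Visible\Bendcode{\lnot A}$ holds. Two facts are used throughout. First, the $\mathbf{K}$-principle is present: applying \ref{ax:IA1} twice to \ref{ax:V4} yields $\vdash \Visible\Bendcode{C\to D}\to(\Visible\Bendcode{C}\to\Visible\Bendcode{D})$. Second, combining this with \ref{ax:V1} gives closure of visibility under classical consequence: if $C\to D$ is a classical theorem, then $\vdash\Visible\Bendcode{C}\to\Visible\Bendcode{D}$. Assume $\Visible\Bendcode{\Visible\Bendcode{A}\to\Visible\Bendcode{B}}$ throughout.

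\emph{Case $\Visible\Bendcode{\lnot A}$.} Since $\lnot A\to(A\to B)$ is a tautology, closure gives $\Visible\Bendcode{A\to B}$ at once.

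\emph{Case $\Visible\Bendcode{A}$.} By \ref{thm:IT2}, which is present, we get $\Visible\Bendcode{\Visible\Bendcode{A}}$. Applying $\mathbf{K}$ to the hypothesis then gives $\Visible\Bendcode{\Visible\Bendcode{B}}$. The instance $\Visible\Bendcode{\Visible\Bendcode{B}}\to\Visible\Bendcode{B}$ of \ref{ax:IA1} yields $\Visible\Bendcode{B}$. Closure under $B\to(A\to B)$ then gives $\Visible\Bendcode{A\to B}$.

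\emph{Case $\lnot\Visible\Bendcode{A}\wedge\lnot\Visible\Bendcode{\lnot A}$.} This is where I expect the real obstacle. The hypothesis alone says nothing usable about $B$ here. I would first apply \ref{ax:IA1} to \ref{ax:V8}, instantiated with $A$ and with $A\to B$ in the place of $B$. In the undecided case this gives
\[
\Visible\Bendcode{\lnot\Visible\Bendcode{\lnot(A\to B)}}\to\Visible\Bendcode{A\to B}.
\]
It then suffices to establish $\Visible\Bendcode{\lnot\Visible\Bendcode{\lnot(A\to B)}}$. Classically $\lnot(A\to B)$ entails $A$, so by visible closure $\Visible\Bendcode{\lnot(A\to B)}\to\Visible\Bendcode{A}$. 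I would try to make this implication visible, using \ref{ax:V1} together with the visible $\mathbf{K}$ of \ref{ax:V4}. This reduces the goal to showing $\Visible\Bendcode{\lnot\Visible\Bendcode{A}}$.

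That last step is the crux. Mere undecidedness, $\lnot\Visible\Bendcode{A}$, is present but need not be visible. To close the gap I would exploit the hypothesis itself, combining it visibly with \ref{ax:V5}, and if needed with \ref{ax:V6}/\ref{ax:V7}, which tie undecidedness to the failure of $\Visible\Bendcode{\Visible\Bendcode{A}\to A}$. If that route stalls, I would instead use \ref{ax:V9} with $A$ and $A\to B$ to obtain $\Visible\Bendcode{A\to B}$ directly from undecidedness in a different way. The three cases jointly exhaust classical logic, which gives the lemma.
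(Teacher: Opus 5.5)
Your two decided cases are correct. The undecided case $\lnot\Visible\Bendcode{A}\wedge\lnot\Visible\Bendcode{\lnot A}$ is left open, however, and the route you sketch for it cannot be completed.

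Your first reduction, to $\Visible\Bendcode{\lnot\Visible\Bendcode{\lnot(A\to B)}}$, is sound. It needs neither \ref{ax:V8} nor the case assumption, since \ref{thm:IT1} gives it for every formula. The trouble comes next. You weaken $\lnot(A\to B)$ to $A$ and aim at $\Visible\Bendcode{\lnot\Visible\Bendcode{A}}$, and that target is refutable exactly in this case. By the instance of \ref{thm:IT1} for $\lnot A$ (with the same visible closure under classical equivalence you use elsewhere), $\Visible\Bendcode{\lnot\Visible\Bendcode{A}}\to\Visible\Bendcode{\lnot A}$, which contradicts $\lnot\Visible\Bendcode{\lnot A}$.

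The case together with your hypothesis provably occurs: take $A=\mathrm{V}$ of \S\ref{invisibleresolution} and $B$ a tautology. So no appeal to \ref{ax:V5}, \ref{ax:V6} or \ref{ax:V7} can deliver that target. The fallback via \ref{ax:V9} does not help either. With $A\to B$ in place of $B$ it only yields $\Visible\Bendcode{A\to B}\to\Visible\Bendcode{\Visible\Bendcode{A\to B}}$, which presupposes the conclusion you want.

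The missing idea is to keep both conjuncts of $\lnot(A\to B)$ and let the hypothesis exclude them jointly, rather than looking for information about $B$. By \ref{ax:V1} and \ref{ax:V4}, $\Visible\Bendcode{\lnot(A\to B)}$ visibly implies $\Visible\Bendcode{A}\wedge\Visible\Bendcode{\lnot B}$. By \ref{ax:V5} this visibly implies $\Visible\Bendcode{A}\wedge\lnot\Visible\Bendcode{B}$, so
\[
\Visible\Bendcode{\Visible\Bendcode{\lnot(A\to B)}\to\lnot\bigl(\Visible\Bendcode{A}\to\Visible\Bendcode{B}\bigr)}.
\]
Contraposing inside and applying $\mathbf{K}$ to your hypothesis gives $\Visible\Bendcode{\lnot\Visible\Bendcode{\lnot(A\to B)}}$, and then \ref{thm:IT1} yields $\Visible\Bendcode{A\to B}$.

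This is the paper's proof, i.e. the familiar derivation of $\Box(\Box p\to\Box q)\to\Box(p\to q)$ from $\mathbf{K}$, $\mathbf{D}$ and $\mathbf{C}$. Because \ref{thm:IT1} is unconditional, it needs no case split, so your first two cases and the appeal to \ref{thm:IT2} become unnecessary.
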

\begin{proof}
   Given Theorem  \ref{thm:IT1},  $\vdash \lnot\Visible\Bendcode{\lnot C}\to \lnot\Visible\Bendcode{\lnot \Visible\Bendcode{C}}.$ Suppose $C$ is $A\wedge\lnot B$, so that $\vdash \lnot\Visible\Bendcode{\lnot (A\wedge\lnot B)}\to \lnot\Visible\Bendcode{\lnot \Visible\Bendcode{A\wedge\lnot B}}.$ Given  theorems \ref{ax:V4} and \ref{ax:V5}, $\vdash \lnot\Visible\Bendcode{\lnot (A\wedge\lnot B)}\to \lnot\Visible\Bendcode{\lnot (\Visible\Bendcode{A}\wedge\lnot \Visible\Bendcode{B})}.$ Finish by contraposing.
\end{proof}

\noindent The $V$ in the next
3 theorems is the Visionary Paradox of Corollary \ref{visibilityparadoxequation}:~p.~\pageref{visibilityparadoxequation}.
   
\begin{thm}[on the invisibility of Axiom \ref{ax:IA1}:~p.~\pageref{ax:IA1}]\label{thm:invisibilityIA1}  
\end{thm}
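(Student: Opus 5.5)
The plan is to show that the instance of \ref{ax:IA1} built on the Visionary sentence $\mathrm{V}$ of Corollary~\ref{visibilityparadoxequation} is provably not visible:
\[
\vdash \lnot\Visible\Bendcode{\Visible\Bendcode{\mathrm{V}}\to \mathrm{V}}.
\]
This is the strong sense of ``invisibility'' used for the other schemas. It is stronger than the mere failure of $\vdash\Visible\Bendcode{\Visible\Bendcode{A}\to A}$ for all $A$, and it implies that failure as long as \textbf{VT} is consistent.

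First, I will recall from \S\ref{invisibleresolution} that
\[
\vdash \mathrm{V}\wedge\lnot\Visible\Bendcode{\mathrm{V}}\wedge\lnot\Visible\Bendcode{\lnot \mathrm{V}}.
\]
In particular, $\vdash\lnot(\Visible\Bendcode{\mathrm{V}}\vee\Visible\Bendcode{\lnot\mathrm{V}})$. So $\mathrm{V}$ is a witness that is visibly neither visible nor anti-visible, although it is present.

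Second, I will take the instance of the visible axiom \ref{ax:V6} with $A:=\mathrm{V}$, namely
\[
\vdash \Visible\Bendcode{\Visible\Bendcode{\Visible\Bendcode{\mathrm{V}}\to \mathrm{V}}\to(\Visible\Bendcode{\mathrm{V}}\vee\Visible\Bendcode{\lnot\mathrm{V}})}.
\]
Applying the Reflexive Axiom \ref{ax:IA1} to this strips the outer $\Visible$ and yields the present implication
\[
\vdash \Visible\Bendcode{\Visible\Bendcode{\mathrm{V}}\to \mathrm{V}}\to(\Visible\Bendcode{\mathrm{V}}\vee\Visible\Bendcode{\lnot\mathrm{V}}).
\]
Third, I will combine this by modus tollens with the first step, which gives $\vdash\lnot\Visible\Bendcode{\Visible\Bendcode{\mathrm{V}}\to\mathrm{V}}$. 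Hence the schema \ref{ax:IA1} is present but has a provably invisible instance, as claimed.

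There is no real computational obstacle. The one delicate point is bookkeeping about which results may be used without circularity. The derivation of $\lnot\Visible\Bendcode{\mathrm{V}}$ and $\lnot\Visible\Bendcode{\lnot\mathrm{V}}$ in \S\ref{invisibleresolution} uses only the Diagonal Lemma and \ref{ax:IA1} itself. It does not use \ref{thm:IT1}, \ref{thm:IT2}, or the Lemma of the Planer. So relying on it here is legitimate. If one preferred to avoid \ref{ax:V6}, an alternative route would pass through Lemma~\ref{planer}, but the direct use of \ref{ax:V6} is shorter and is what I would write.
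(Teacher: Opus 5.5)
Your derivation is correct, but it takes a different route from the paper's.

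\textbf{The paper's route.} The paper argues by a meta-level reductio. Suppose $\vdash\Visible\Bendcode{\Visible\Bendcode{\mathrm{V}}\to\mathrm{V}}$. Carry the fixed-point equivalence of Corollary~\ref{visibilityparadoxequation} inside $\Visible$, so that $(\Visible\Bendcode{\mathrm{V}}\to\mathrm{V})\to\mathrm{V}$ is visible via \ref{ax:V1}. Then distribute using \ref{ax:V4}, stripped twice by \ref{ax:IA1}. This gives $\vdash\Visible\Bendcode{\mathrm{V}}$, contradicting $\vdash\lnot\Visible\Bendcode{\mathrm{V}}$.

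\textbf{Your route.} You instead instantiate \ref{ax:V6} at $\mathrm{V}$ and play it against both halves of the undecidedness result of \S\ref{invisibleresolution}. This buys three things:
\begin{itemize}
\item You need the diagonal equivalence only as a present theorem, never under $\Visible$.
\item You get the object-level theorem $\vdash\lnot\Visible\Bendcode{\Visible\Bendcode{\mathrm{V}}\to\mathrm{V}}$ directly, rather than non-derivability modulo consistency.
\item The argument works verbatim for any sentence $A$ with $\vdash\lnot\Visible\Bendcode{A}\wedge\lnot\Visible\Bendcode{\lnot A}$, for instance $\lnot\mathrm{W}\in\mathrm{W}$ of \S\ref{invisibleresolutionfromvisiblecomprehension}.
\end{itemize}

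\textbf{What the paper's route buys.} It uses neither \ref{ax:V6} nor the Lemma of the Planer. The invisibility of \ref{ax:IA1} therefore already holds over the Diagonal Lemma, \ref{ax:V1}, \ref{ax:V4} and \ref{ax:IA1} alone. This matters because \ref{ax:V6} already says that visible reflection is confined to visibly decided sentences, so your argument is close to a direct reading of that axiom. The paper's argument also yields your object-level form if phrased as $\vdash\Visible\Bendcode{\Visible\Bendcode{\mathrm{V}}\to\mathrm{V}}\to\Visible\Bendcode{\mathrm{V}}$ followed by modus tollens.

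\textbf{Two corrections to your framing.}
\begin{itemize}
\item The paper's invisibility proofs, including those for \ref{thm:IT1} and \ref{thm:IT2}, establish non-derivability by reductio, not the negated form. Your conclusion is therefore stronger than the sense used elsewhere in the paper, not a match for it.
\item $\mathrm{V}$ is \emph{provably}, not \emph{visibly}, neither visible nor anti-visible. By the paper's Diagonal Lemma, $\lnot\Visible\Bendcode{\mathrm{V}}$ is the very sentence $\mathrm{V}$, whose visibility is refuted.
\end{itemize}
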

\begin{proof}
   If $\vdash \Visible\Bendcode{\Visible\Bendcode{V}\to V}$ then  $\vdash~\Visible\Bendcode{V}$ given Corollary \ref{visibilityparadoxequation}:~p.~\pageref{visibilityparadoxequation}; but this is impossible, as per \S\ref{invisibleresolution}:~p.~\pageref{invisibleresolution}  $\vdash\lnot\Visible\Bendcode{\mathrm{V}}$.     
\end{proof}

\begin{thm}[on the invisibility of Theorem \ref{thm:IT1}:]\label{invisibilityIT1}  
\end{thm}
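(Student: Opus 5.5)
The plan is to argue by \emph{reductio}, in the same way as the proof of Theorem~\ref{thm:invisibilityIA1}. I assume that \ref{thm:IT1} is visible and derive a visibility claim about the Visionary Paradox $\mathrm{V}$ of Corollary~\ref{visibilityparadoxequation} that \S\ref{invisibleresolution} rules out. I will work with the instance $A:=\mathrm{V}$, so the assumption is
\[
\vdash \Visible\Bendcode{\Visible\Bendcode{\lnot\Visible\Bendcode{\lnot \mathrm{V}}} \to \Visible\Bendcode{\mathrm{V}}}.
\]

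\textbf{Step 1: the paradox biconditional is visible.} By the Diagonal Lemma (Theorem~\ref{presentationaldiagonallemma}), $\mathrm{V}$ and $\lnot\Visible\Bendcode{\mathrm{V}}$ are the same presentational string. Hence $\mathrm{V}\leftrightarrow\lnot\Visible\Bendcode{\mathrm{V}}$ is an instance of $p\leftrightarrow p$, and \ref{ax:V1} gives $\vdash\Visible\Bendcode{\mathrm{V}\leftrightarrow\lnot\Visible\Bendcode{\mathrm{V}}}$. Using \ref{ax:V1} and \ref{ax:V4}, I can then rewrite $\mathrm{V}$ and $\lnot\Visible\Bendcode{\mathrm{V}}$ for one another inside a single layer of $\Visible$. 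In particular, the Reflexive instance $\Visible\Bendcode{\mathrm{V}}\to\mathrm{V}$ is visibly equivalent to $\lnot\Visible\Bendcode{\mathrm{V}}$, that is, to $\mathrm{V}$ itself.

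\textbf{Step 2: unpack the assumption inside the box.} Apply \ref{ax:V4} to the assumed visible conditional to obtain
\[
\vdash \Visible\Bendcode{\Visible\Bendcode{\lnot\Visible\Bendcode{\lnot\mathrm{V}}}}\to\Visible\Bendcode{\Visible\Bendcode{\mathrm{V}}}.
\]
Combined with \ref{ax:IA1}, this should yield $\vdash\Visible\Bendcode{\mathrm{V}}$, contradicting $\vdash\lnot\Visible\Bendcode{\mathrm{V}}$ from \S\ref{invisibleresolution}, provided the antecedent $\Visible\Bendcode{\Visible\Bendcode{\lnot\Visible\Bendcode{\lnot\mathrm{V}}}}$ can be secured.

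As a fallback, I would combine the assumed visible \ref{thm:IT1} with the visible \ref{ax:V5}. This gives, visibly, $\Visible\Bendcode{\lnot\Visible\Bendcode{\lnot A}}\leftrightarrow\Visible\Bendcode{A}$, and so reproduces the Planer (Lemma~\ref{planer}) one level up, inside $\Visible$. Applying that visible Planer to the visible biconditional of Step~1 should give $\vdash\Visible\Bendcode{\Visible\Bendcode{\mathrm{V}}\to\mathrm{V}}$. That contradicts Theorem~\ref{thm:invisibilityIA1}.

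\textbf{Main obstacle.} The hard step is producing the nested visible antecedent, or equivalently lifting the Planer argument under $\Visible$. The proof of Lemma~\ref{planer} used \ref{thm:IT1} only as a present theorem, together with \ref{ax:V4} and \ref{ax:V5}. For the lifted version I must check that every step is an instance of a \emph{visible} schema: \ref{ax:V4} and \ref{ax:V5} are visible, and \ref{thm:IT1} is visible by hypothesis. I must also check that contraposition and substitution of visibly equivalent formulas are available under $\Visible$ via \ref{ax:V1} and \ref{ax:V4}. If that check fails, I would try the disjunctive axiom \ref{ax:V8} with $B:=\mathrm{V}$ as a substitute. Its visible left disjunct $\Visible\Bendcode{A}\vee\Visible\Bendcode{\lnot A}$ can be chosen with $A:=\mathrm{V}$, and is then refuted inside the box by the visible form of \S\ref{invisibleresolution}.
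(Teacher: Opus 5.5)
There is a genuine gap: none of your routes contains the step that actually uses the reductio hypothesis. Your assumed instance
\[
\vdash\Visible\Bendcode{\Visible\Bendcode{\lnot\Visible\Bendcode{\lnot\mathrm{V}}}\to\Visible\Bendcode{\mathrm{V}}}
\]
already has the shape $\Visible\Bendcode{\Visible\Bendcode{A'}\to\Visible\Bendcode{B'}}$, with $A':=\lnot\Visible\Bendcode{\lnot\mathrm{V}}$ and $B':=\mathrm{V}$. So Lemma~\ref{planer}, exactly as already proved and with no lifting, gives $\vdash\Visible\Bendcode{\lnot\Visible\Bendcode{\lnot\mathrm{V}}\to\mathrm{V}}$. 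Chain this under $\Visible$ with the \ref{ax:V5} instance $\Visible\Bendcode{\Visible\Bendcode{\mathrm{V}}\to\lnot\Visible\Bendcode{\lnot\mathrm{V}}}$, using closure via \ref{ax:V1}, \ref{ax:V4} and \ref{ax:IA1}. This yields $\vdash\Visible\Bendcode{\Visible\Bendcode{\mathrm{V}}\to\mathrm{V}}$. Your Step~1 then turns it into $\vdash\Visible\Bendcode{\mathrm{V}}$, against $\vdash\lnot\Visible\Bendcode{\mathrm{V}}$. That is the paper's proof. Your intended endpoint matches its penultimate line, but you never reach it.

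Each of your routes fails.
\begin{itemize}
\item \emph{Step~2.} It needs $\vdash\Visible\Bendcode{\Visible\Bendcode{\lnot\Visible\Bendcode{\lnot\mathrm{V}}}}$, but VT refutes this without your hypothesis. By \ref{ax:IA1} it gives $\Visible\Bendcode{\lnot\Visible\Bendcode{\lnot\mathrm{V}}}$. The present \ref{thm:IT1} turns that into $\Visible\Bendcode{\mathrm{V}}$, and $\vdash\lnot\Visible\Bendcode{\mathrm{V}}$. So ``securing the antecedent'' already \emph{is} the contradiction, not a step toward it.
\item \emph{The lifted-Planer fallback.} It places the hypothesis at the wrong level. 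A visible Planer, unpacked with \ref{ax:V4} and \ref{ax:IA1}, only yields $\Visible\Bendcode{\Visible\Bendcode{\Visible\Bendcode{X}\to\Visible\Bendcode{Y}}}\to\Visible\Bendcode{\Visible\Bendcode{X\to Y}}$. It therefore needs a doubly visible premise, and neither the visible biconditional of Step~1 nor your singly visible hypothesis supplies one. Lifting would also need \ref{ax:V5} under two layers of $\Visible$, while the axioms give \ref{ax:V4} doubly visible but \ref{ax:V5} only singly.
\item \emph{The \ref{ax:V8} fallback.} It never uses the hypothesis. If it worked, it would derive the very instance displayed above outright, making VT inconsistent by the theorem at issue. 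Indeed, the ``visible form of \S\ref{invisibleresolution}'' it relies on contains $\Visible\Bendcode{\lnot\Visible\Bendcode{\mathrm{V}}}$. That is literally $\Visible\Bendcode{\mathrm{V}}$, which VT refutes.
\end{itemize}
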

\begin{proof}
 If Theorem \ref{thm:IT1} were visible,  $\vdash\Visible\Bendcode{\Visible\Bendcode{\lnot\Visible\Bendcode{\lnot V}} \to \Visible\Bendcode{V}},$ so that by Lemma \ref{planer}, $\vdash\Visible\Bendcode{\lnot\Visible\Bendcode{\lnot V} \to V}$. With Axioms \ref{ax:IA1}:~p.~\pageref{ax:IA1} and \ref{ax:V5} 
 it follows
 that  $\vdash\Visible\Bendcode{\Visible\Bendcode{V} \to V}$.  The latter 
 entails 
 $\vdash\Visible\Bendcode{V}$,  given Corollary \ref{visibilityparadoxequation} in \S\ref{invisibleresolution}, 
 and that 
 is impossible as per 
 \S\ref{invisibleresolution} $\vdash\lnot\Visible\Bendcode{\mathrm{V}}$.
\end{proof}

\begin{thm}[on the invisibility of Theorem \ref{thm:IT2}:]\label{invisibilityIT2}   
\end{thm}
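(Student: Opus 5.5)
We need to show that Theorem \ref{thm:IT2} is invisible, i.e.\ that it is not the case that $\vdash\Visible\Bendcode{\Visible\Bendcode{A}\to\Visible\Bendcode{\Visible\Bendcode{A}}}$ for all $A$. The strategy mirrors the proof of Theorem~\ref{invisibilityIT1}. We assume the visibility of a suitable instance and derive $\vdash\Visible\Bendcode{\mathrm V}$, which \S\ref{invisibleresolution} rules out since $\vdash\lnot\Visible\Bendcode{\mathrm V}$.

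The instance I would choose is the one matching Axiom~\ref{ax:V7}, namely $A:=\mathrm V$. Suppose, for reductio,
\[
\vdash\Visible\Bendcode{\Visible\Bendcode{\mathrm V}\to\Visible\Bendcode{\Visible\Bendcode{\mathrm V}}}.
\]
Axiom~\ref{ax:V7} with $A:=\mathrm V$ gives
\[
\vdash\Visible\Bendcode{\Visible\Bendcode{\Visible\Bendcode{\mathrm V}\to\Visible\Bendcode{\Visible\Bendcode{\mathrm V}}}\to(\Visible\Bendcode{\mathrm V}\vee\Visible\Bendcode{\lnot\mathrm V})}.
\]
Distributing the outer visibility over the implication requires Axiom~\ref{ax:IA1} applied to Axiom~\ref{ax:V4}, which yields the present schema $\vdash\Visible\Bendcode{C\to D}\to(\Visible\Bendcode{C}\to\Visible\Bendcode{D})$. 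Combining this with the two displays gives
\[
\vdash\Visible\Bendcode{\Visible\Bendcode{\mathrm V}\vee\Visible\Bendcode{\lnot\mathrm V}}.
\]
Axiom~\ref{ax:IA1} then delivers $\vdash\Visible\Bendcode{\mathrm V}\vee\Visible\Bendcode{\lnot\mathrm V}$. This contradicts $\vdash\lnot\Visible\Bendcode{\mathrm V}\wedge\lnot\Visible\Bendcode{\lnot\mathrm V}$ from \S\ref{invisibleresolution}. In particular we obtain $\vdash\Visible\Bendcode{\mathrm V}$ by disjunctive syllogism with $\lnot\Visible\Bendcode{\lnot\mathrm V}$, which is impossible.

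The main point needing care is the step from visibility of an implication to the implication between visibilities, since the visible schema \ref{ax:V4} is stated only under an extra visibility layer. I would handle this by stripping that layer with \ref{ax:IA1}, since \ref{ax:IA1} is present even though it is invisible. As a fallback, \ref{ax:V6} combined with Lemma~\ref{planer} gives a parallel route in the style of Theorem~\ref{thm:invisibilityIA1}.
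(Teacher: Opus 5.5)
Your route differs from the paper's and works in substance, but one step is mis-levelled as written. Write $P$ for $\Visible\Bendcode{\mathrm V}\to\Visible\Bendcode{\Visible\Bendcode{\mathrm V}}$ and $Q$ for $\Visible\Bendcode{\mathrm V}\vee\Visible\Bendcode{\lnot\mathrm V}$.

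Your instance of \textbf{(VA7)} is $\vdash\Visible\Bendcode{\Visible\Bendcode{P}\to Q}$, and your reductio hypothesis is $\vdash\Visible\Bendcode{P}$. Distributing with the present \textbf{K}-schema gives only $\vdash\Visible\Bendcode{\Visible\Bendcode{P}}\to\Visible\Bendcode{Q}$. Detaching that needs $\vdash\Visible\Bendcode{\Visible\Bendcode{P}}$, which your hypothesis does not supply. So your displayed $\vdash\Visible\Bendcode{\Visible\Bendcode{\mathrm V}\vee\Visible\Bendcode{\lnot\mathrm V}}$ is unjustified as it stands. Also, \textbf{(VA4)} as printed carries two layers of $\Visible$, so \textbf{(IA1)} must be applied twice to obtain the present \textbf{K}-schema.

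The repair is simpler than your detour. Apply \textbf{(IA1)} directly to the \textbf{(VA7)} instance to get $\vdash\Visible\Bendcode{P}\to Q$. Then detach with the hypothesis to obtain $\vdash\Visible\Bendcode{\mathrm V}\vee\Visible\Bendcode{\lnot\mathrm V}$, which contradicts \S\ref{invisibleresolution}; \textbf{(VA4)} is not needed at all. Alternatively, you can keep your detour by lifting the hypothesis to $\vdash\Visible\Bendcode{\Visible\Bendcode{P}}$ with the present theorem \textbf{(IT2)}.

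With that fix, here is how your proof compares with the paper's. The paper never uses \textbf{(VA7)}. It applies Lemma~\ref{planer}, which rests on \textbf{(IT1)} (hence \textbf{(VA8)}), \textbf{(VA4)} and \textbf{(VA5)}. This passes from $\Visible\Bendcode{\Visible\Bendcode{\mathrm V}\to\Visible\Bendcode{\Visible\Bendcode{\mathrm V}}}$ to $\Visible\Bendcode{\mathrm V\to\Visible\Bendcode{\mathrm V}}$. The fixed point $\mathrm V\leftrightarrow\lnot\Visible\Bendcode{\mathrm V}$, used under $\Visible$, then gives $\Visible\Bendcode{\mathrm V\to\lnot\mathrm V}$ and hence $\Visible\Bendcode{\lnot\mathrm V}$, contradicting $\vdash\lnot\Visible\Bendcode{\lnot\mathrm V}$.

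What your route buys:
\begin{itemize}
\item It is shorter and avoids Planer's Lemma.
\item It needs only the present facts of \S\ref{invisibleresolution}, not a visible form of the diagonal biconditional.
\item It applies to any sentence $A$ with $\vdash\lnot\Visible\Bendcode{A}\wedge\lnot\Visible\Bendcode{\lnot A}$, for instance the Russell-type sentence of \S\ref{invisibleresolutionfromvisiblecomprehension}.
\end{itemize}
The price is that \textbf{(VA7)} essentially states the conclusion as an axiom.

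What the paper's route buys: it is uniform with Theorem~\ref{invisibilityIT1}, and it ties the result to the \textbf{KDC}-style core (cf.\ Theorem~\ref{planerKDC}). It would also survive dropping \textbf{(VA7)}.

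Your \textbf{(VA6)} fallback is essentially the paper's route. However, \textbf{(VA6)} is not what closes it: once Planer's Lemma is applied, the fixed point already yields $\Visible\Bendcode{\lnot\mathrm V}$ directly.
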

\begin{proof}
 If Theorem \ref{thm:IT2} were visible,  $\vdash\Visible\Bendcode{\Visible\Bendcode{V}\to \Visible\Bendcode{\Visible\Bendcode{V}}},$ so that by Lemma \ref{planer},  $\vdash\Visible\Bendcode{V\to \Visible\Bendcode{V}}.$ Given Corollary \ref{visibilityparadoxequation} in \S\ref{invisibleresolution}, $\vdash\Visible\Bendcode{V\to \lnot V}$, so that $\vdash\Visible\Bendcode{\lnot V}$;  but  this is impossible, as per \S\ref{invisibleresolution} $\vdash\lnot\Visible\Bendcode{\lnot\mathrm{V}}$.
\end{proof}

\newpage

\section{Reductionary provability and \textbf{KDC}}

\epigraph{
``Statt des törichten Ignorabimus heiße im Gegenteil unsere Losung:
Wir müssen wissen. Wir werden wissen.''
}{
David Hilbert, \textit{Naturerkennen und Logik}
\parencite[963]{Hilbert1930}.
For the fuller passage relevant here, see Appendix.
}

\epigraph{Do not modalize beyond provability!}{}

\noindent Since the incompleteness theorems of \parencite{Godel1931}, 
the arithmetical and modal analyses of provability have become central themes of mathematical logic.
\parencite{Rosser1936} subsequently showed that incompleteness may already be obtained under weaker consistency assumptions than those assumed in Gödel's original argument.

\subsection{Rosserian provability and KD}

Rosserian provability predicates were for a long time taken to be
mathematically less well-behaved than standard Gödel provability predicates. This was to a large extent because it was not clear whether the \textbf{K}-principle $\Box(p\to q)\to (\Box p\to \Box q)$ fails for Rosser provability predicates or the \textbf{4}-principle $\Box p\to \Box\Box p$ fails for Rosser provability predicates, or both; it was clear that at least one would have to fail, for otherwise
Löb's theorem would become derivable, contradicting the seriality
validated by Rosser provability. 

Importantly, concerning this, \parencite[47]{Kreisel1974} comments as follows regarding their rendering $\mathscr{F}^R_{(i)}$ of Rosser provability: 

\begin{quote}
``We know little about $\mathscr{F}^R_{(i)}$ (and perhaps want to know less): Does it satisfy
I.1.2? or I.1.3? (demonstrable completeness for $\Sigma^0_1$-sentences, resp. demonstrable closure under modus ponens) … \ . ''    
\end{quote}
 By \parencite[11-12]{Kreisel1974}, I.1.2 corresponds with the \textbf{4}-schema, and I.1.3 corresponds with the \textbf{K}-schema.

The situation became clearer  with \parencite{Arai1990}, who built upon insights in \parencite{SolovievetGuaspari1979} and constructed a Rosserian provability predicate which satisfies the \textbf{K}-schema, and another satisfying the \textbf{4}-schema. \parencite[489]{Arai1990} has the best presentation of the systems, but much more of the paper is of course needed for the construction.

In the famous exposition of classical provability logic \parencite{Boolos1993}, the author points out that Löb's theorem is counterintuitive for several reasons. For example, \parencite[54-55]{Boolos1993} states 

\begin{quote}
    ``In the first place, it is often hard to understand how vast the mathematical gap is between truth and provability.  And to one who lacks that understanding and does not distinguish between truth and provability, $\operatorname{Bew}(\ulcorner S\urcorner) \to A$, which the hypothesis of Löb's theorem asserts to be provable, might appear to be trivially true in *all* cases, whether $S$ is true or false, provable or unprovable. But if $S$ is false, $S$ had better not be provable. Thus it would seem that $S$ ought not always to be provable provided merely that  (the possibly trivial-seeming) $\operatorname{Bew}(\ulcorner S\urcorner)\to S$ is provable.''
\end{quote}

But despite this intuitive pressure articulated by \parencite{Boolos1993},
Löb's theorem entails that $S$ is provable whenever
$\operatorname{Bew}(\ulcorner S\urcorner)\to S$ is provable.

\parencite{Boolos1993} discusses several further intuitive difficulties
concerning Löb's theorem on pp. 54-55.

Interestingly,
\parencite{Boolos1993}
does not discuss the results of \parencite{Arai1990} on Rosserian provability predicates
satisfying \textbf{K} and \textbf{4}.

More recently, \parencite{Kurahashi2020SL} obtains the result that modal logic \textbf{KD} is a Rosserian provability logic. 

Given the above as background, I henceforth take the adequacy of \textbf{KD} as a modal logic of a Rosserian provability predicate for granted. 

In accordance with what I take to be the truth-directed conception of mathematical proof\label{truthdirectedproof} associated with Hilbert (cf.~Appendix), I think that provability should approximate truth so closely that truth-theoretic versions of the characteristic provability principles are intuitively truth-preserving in ordinary non-paradoxical cases.

Moreover, modal logical principles for provability should remain valid on the trivial Kripke frame defined by the condition

\begin{equation}\label{trivialframe}
    \forall x\forall y(x\mathbf{R}y\leftrightarrow x=y),
\end{equation}

which characterizes the validity of the modal formula

\[
q\leftrightarrow \Box q.
\]

The latter is of course reminiscent of Tarski's T-schema, and is the most central principle governing truth that accords with pre-theoretical intuitions.

By the \emph{truth-transform} of a provability principle,
I mean the result of replacing occurrences of the provability predicate
with corresponding occurrences of a truth predicate.

These considerations motivate the following 

\begin{dfn}[of adequacy requirements for provability.]
\label{adequacyrequirements}
\leavevmode\par\vspace{2mm}
\begin{enumerate}
\item[\textbf{(TP)}]
The truth transform of characteristic provability principles should be intuitively \emph{truth-preserving} in ordinary non-paradoxical cases.

\item[\textbf{(TF)}]
Characteristic provability principles should be valid on the \emph{trivial frame} of Equation~\eqref{trivialframe}:~p.~\pageref{trivialframe}.
\end{enumerate}
\end{dfn}

\begin{proposition}[on the inadequacy of Löb's formula]
\leavevmode\par\vspace{2mm}

Löb's formula fails both adequacy requirements \textbf{(TP)} and \textbf{(TF)} above.
\end{proposition}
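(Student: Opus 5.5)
We treat Löb's formula as the modal schema $\Box(\Box p\to p)\to\Box p$ and check it against each requirement of Definition~\ref{adequacyrequirements} in turn. The verdict on \textbf{(TF)} is a direct frame computation; the verdict on \textbf{(TP)} uses the truth-transform together with the T-schema intuition and an ordinary false sentence.

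For \textbf{(TF)}, consider the trivial frame of Equation~\eqref{trivialframe}. Take a single world $w$ with $w\mathbf{R}w$, and a valuation on which $p$ is false at $w$. Since $w$ is the only world accessible from $w$, we have $\Box q$ true at $w$ iff $q$ is true at $w$; this is exactly why the frame validates $q\leftrightarrow\Box q$. So $\Box p$ is false at $w$. Hence $\Box p\to p$ is true at $w$, and so $\Box(\Box p\to p)$ is true at $w$. The antecedent of Löb's formula therefore holds while its consequent $\Box p$ fails, and the formula is refuted. For completeness I would add the standard fact that Löb's formula is valid on a frame only if the frame is transitive and conversely well-founded. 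A reflexive point gives an infinite ascending $\mathbf{R}$-chain $w\mathbf{R}w\mathbf{R}w\cdots$, so this fact confirms the failure independently.

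For \textbf{(TP)}, form the truth transform $\TruthPred(\TruthPred(S)\to S)\to\TruthPred(S)$. Choose an ordinary non-paradoxical false sentence $S$, say $0=1$. By the T-schema, which is intuitively sound in such cases, $\TruthPred(S)$ is false. The conditional $\TruthPred(S)\to S$ is then true, so by the T-schema again $\TruthPred(\TruthPred(S)\to S)$ is true. The consequent $\TruthPred(S)$ is false, so the principle leads from truth to falsehood and is not truth-preserving. I would note that this case is the one flagged in \parencite[54-55]{Boolos1993}, where a false $S$ should not become provable merely because $\operatorname{Bew}(\ulcorner S\urcorner)\to S$ is.

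The main obstacle is conceptual rather than technical. \textbf{(TP)} is an informal requirement, so I must justify that the chosen $S$ counts as an ``ordinary non-paradoxical case'' and that the T-schema may be applied to $\TruthPred(S)\to S$. I would handle this by keeping $S$ arithmetical and closed, and by observing that the argument is literally the \textbf{(TF)} computation read under the truth interpretation, since the trivial frame models $q\leftrightarrow\Box q$. Stated this way, the two verdicts support each other, and the proof of \textbf{(TP)} reduces to the frame countermodel.
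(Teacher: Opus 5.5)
Your proposal is correct and follows the paper's proof. For \textbf{(TP)} the paper likewise picks a false sentence (it uses $\exists x(x\in x\wedge\lnot x\in x)$ where you use $0=1$) and notes that the truth-transform of L\"ob's formula then has a true antecedent and a false consequent. For \textbf{(TF)} it only cites that L\"ob's formula requires converse well-foundedness, which the reflexive trivial frame violates; your explicit one-point countermodel with $p$ false spells this out concretely and is slightly more rigorous.
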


\begin{proof}

\textbf{Firstly}, Löb's formula has

\[
\vdash
\pi\ulcorner
\pi\ulcorner A\urcorner \to A
\urcorner
\to
\pi\ulcorner A\urcorner,
\]

but it is not generally true that

\[
\mathrm{True}\,\text{``}\mathrm{True}\,\text{``}\!A\text{''} \to A\text{''}\to
\mathrm{True}\,\text{``}\!A\text{''}.
\]

For example, if we let \(A\) be

\[
\exists x(x\in x\wedge \lnot x\in x),
\]

then the resulting truth-claim is false.

\textbf{Secondly}, Löb's formula requires converse well-foundedness, and that fails on the trivial Kripke frame defined by Equation~\eqref{trivialframe}:~p.~\pageref{trivialframe}.

Accordingly, Löb's formula fails in the limiting trivial modal frame which provability should approximate as closely as possible.

\end{proof}

These considerations suggest that a provability logic such as
\parencite{Kurahashi2020SL}'s Rosserian \textbf{KD} better captures the intuitive
truth-directed character historically associated with mathematical proof.

\subsection{KDR, KDC and stabilized provability}

\parencite[608ff]{Kurahashi2020SL} introduces the provability logic \textbf{KDR}, with the characteristic schema \textbf{R}: $\Box \lnot p\to \Box\lnot\Box p$, or equivalently \textbf{R}: $\Box q\to \Box\Diamond q$. He remarks that it is easy

\begin{quote}
   to show that the validity of the modal formula $\Box\lnot p\to\Box\lnot\Box p$ in a Kripke frame $\mathcal{F}=(\bm{W},\prec)$ is characterized by the condition
    \[
\forall  x\forall y\in \bm{W}(x\prec y\Rightarrow \exists z(x\prec z \ \& \ y\prec z)).
    \]
\end{quote}

"Notice that \textbf{R} is a logical consequence of \textbf{4}, but not conversely. In this sense \textbf{R} may be viewed as an approximation to \textbf{4}", right?

Rosser's condition holds on the trivial Kripke frame of Equation \eqref{trivialframe}:~p.~\pageref{trivialframe}, so the characteristic schema $\mathbf{R}$ accord with my precept on p.~\pageref{truthdirectedproof} that proofs should be truth-directed. 

But we will for reasons stated below not adopt $\mathbf{KDR}$ as the modal logic for provability. Instead we adopt the modal logic $\mathbf{KDC}$, discussed briefly in \P(a) of \S\ref{Twoinvisibletheoremsofvisibilitylogic}:~p.~\pageref{Twoinvisibletheoremsofvisibilitylogic}, with the seriality axiom \textbf{D} and the characteristic axiom \textbf{C}, $\Box\Diamond p\to \Box p$. \textbf{KDC} is a normal modal logic, so it has the necessitation rule in that $\vdash p\Rightarrow \ \vdash\Box p$.

\begin{thm}[that an adaption of Planer's Lemma holds for \textbf{KDC}:]\label{planerKDC}\leavevmode\par\vspace{2mm}

\[
\textbf{KDC}\vdash \Box(\Box p\to\Box q)\to \Box(p\to q).
\]
\end{thm}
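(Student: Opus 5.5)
The plan is to mirror the proof of Lemma~\ref{planer}, with the modal axiom \textbf{C} of \textbf{KDC} taking the place of Theorem~\ref{thm:IT1}. Throughout I use that \textbf{KDC} is a normal modal logic: it has necessitation, and $\Box$ is monotone over provable implications. Consequently $\Diamond$ is also monotone, since $\vdash A\to B$ gives $\vdash \Diamond A\to\Diamond B$.

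\textbf{Step 1: rewrite \textbf{C} as a $\Diamond$-principle.} Contraposing \textbf{C}, $\Box\Diamond r\to\Box r$, gives $\lnot\Box r\to\lnot\Box\Diamond r$. This is $\Diamond\lnot r\to\Diamond\Box\lnot r$. Writing $s$ for $\lnot r$, we get the schema
\[
\textbf{KDC}\vdash \Diamond s\to\Diamond\Box s ,
\]
which plays the role of Theorem~\ref{thm:IT1} in the proof of Lemma~\ref{planer}.

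\textbf{Step 2: instantiate and push $\Box$ inside.} I take $s$ to be $p\wedge\lnot q$. By \textbf{K} we have $\vdash\Box(p\wedge\lnot q)\to(\Box p\wedge\Box\lnot q)$. By \textbf{D} we have $\vdash\Box\lnot q\to\lnot\Box q$. Together these give
\[
\vdash\Box(p\wedge\lnot q)\to(\Box p\wedge\lnot\Box q).
\]
Monotonicity of $\Diamond$ then yields
\[
\vdash\Diamond\Box(p\wedge\lnot q)\to\Diamond(\Box p\wedge\lnot\Box q).
\]

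\textbf{Step 3: chain and contrapose.} Chaining Step~1 with Step~2 gives
\[
\vdash\Diamond(p\wedge\lnot q)\to\Diamond(\Box p\wedge\lnot\Box q).
\]
Contraposing, and rewriting $\lnot\Diamond\lnot$ as $\Box$, gives
\[
\vdash\Box(\Box p\to\Box q)\to\Box(p\to q),
\]
which is the claim.

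I expect the only delicate point to be Step~1: I must make sure that the contrapositive of \textbf{C} is read with the correct duality, so that it yields $\Diamond s\to\Diamond\Box s$ and not some variant. After that, the argument is routine normal-modal-logic bookkeeping, where seriality \textbf{D} is needed exactly once, to pass from $\Box\lnot q$ to $\lnot\Box q$.
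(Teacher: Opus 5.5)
Your proof is correct and takes essentially the paper's approach. The paper proves this only by noting that the derivation of the Planer Lemma carries over with \textbf{C}, \textbf{K} and \textbf{D} in place of \textbf{(IT1)}, \textbf{(VA4)} and \textbf{(VA5)}; your Steps 1--3 write that derivation out explicitly (contrapose \textbf{C} to $\Diamond s\to\Diamond\Box s$, take $s$ to be $p\wedge\lnot q$, distribute with \textbf{K} and \textbf{D}, contrapose).
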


\begin{proof}
    This is by observing that \textbf{KDC} has principles which correspond to the principles presupposed in the proof of Planer's Schema for \textbf{VT} in Lemma \ref{planer}:~p.~\pageref{planer}. The latter principles are 

    \smallskip
Theorem  \ref{thm:IT1},  

\[
\vdash \lnot\Visible\Bendcode{\lnot C}\to \lnot\Visible\Bendcode{\lnot \Visible\Bendcode{C}},  
\]

Axiom \ref{ax:V4},

\[
\vdash \Visible\Bendcode{\Visible\Bendcode{\Visible\Bendcode{A \to B} \to
  (\Visible\Bendcode{A} \to \Visible\Bendcode{B})}},
  \]

and Axiom \ref{ax:V5},

\[
\vdash \Visible\Bendcode{\Visible\Bendcode{A} \to \lnot \Visible\Bendcode{\lnot A}}.
\]

These correspond to Axioms \textbf{C}, \textbf{K} and \textbf{D} of \textbf{KDC}, respectively.

\end{proof}

\begin{cor}[on the incompatibility of \textbf{KDR} and \textbf{KDC}]\leavevmode\par\vspace{2mm}

Let 
\[
\mathbf{KDRC}\coloneqq \mathbf{KDR}+\mathbf{KDC}. 
\]

$\mathbf{KDRC}$ is inconsistent.
\end{cor}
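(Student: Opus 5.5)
The plan is to mirror, for \textbf{KDRC}, the invisibility arguments of Theorems~\ref{invisibilityIT1} and~\ref{invisibilityIT2}. There the Lemma of the Planer, combined with a necessitated instance of a 4-like principle, produced a sentence that was both provable and refuted. First, a caveat that shapes the whole plan. Purely as a propositional normal modal logic, \textbf{KDRC} is validated by the trivial frame of Equation~\eqref{trivialframe}, since that frame validates \textbf{D}, \textbf{R} and \textbf{C}. So inconsistency cannot come from modal reasoning alone. I would read the corollary, as its placement after Theorem~\ref{planerKDC} suggests, as concerning \textbf{KDRC} taken as the logic of a provability (or visibility) predicate $\Box$ over a base theory with diagonalization. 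That is, I assume that Theorem~\ref{presentationaldiagonallemma}, or its arithmetical analogue, supplies fixed points which are themselves provable under $\Box$.

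\textbf{Step 1.} By necessitation in \textbf{KDC}, the schema \textbf{R} yields $\vdash\Box(\Box\lnot p\to\Box\lnot\Box p)$. Theorem~\ref{planerKDC}, instantiated with $\lnot p$ and $\lnot\Box p$, then gives $\vdash\Box(\lnot p\to\lnot\Box p)$. Contraposing under the box, this is $\vdash\Box(\Box p\to p)$ for every $p$. This is exactly the visible reflexivity which Theorem~\ref{thm:invisibilityIA1} shows to be untenable in the presence of diagonal sentences.

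\textbf{Step 2.} Take the diagonal sentence $\mathrm V$ with $\vdash \mathrm V\leftrightarrow\lnot\Box\mathrm V$ from Corollary~\ref{visibilityparadoxequation}, with $\Box$ in place of $\Visible$, and necessitate it to get $\vdash\Box(\mathrm V\leftrightarrow\lnot\Box\mathrm V)$. Combining this under the box with the instance $\Box(\Box\mathrm V\to\mathrm V)$ from Step~1, \textbf{K} yields $\vdash\Box(\Box\mathrm V\to\lnot\Box\mathrm V)$, hence $\vdash\Box\lnot\Box\mathrm V$. By the other half of the fixed point, this gives $\vdash\Box\mathrm V$. By Step~1 together with \textbf{K}, it also gives $\vdash\Box\Box\mathrm V\to\Box\mathrm V$, though that is not yet contradictory.

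\textbf{Step 3, the main obstacle.} I must turn $\vdash\Box\mathrm V$ and $\vdash\Box\lnot\Box\mathrm V$ into a clash with \textbf{D}, which requires either $\vdash\Box\Box\mathrm V$ or $\vdash\Box\lnot\mathrm V$.

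First, I would apply \textbf{R} to $\lnot\mathrm V$ in the form $\Box q\to\Box\Diamond q$ with $q=\mathrm V$. From $\vdash\Box\mathrm V$ this gives $\vdash\Box\lnot\Box\lnot\mathrm V$.

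Second, I would apply \textbf{C} with $p=\lnot\Box\mathrm V$, aiming to derive $\Box\Diamond\lnot\Box\mathrm V$ from $\Box\lnot\Box\mathrm V$ via \textbf{R}, and so to reach a box of a statement equivalent to $\lnot\mathrm V$.

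If both of these stall, I would switch to the Planer route used in Theorem~\ref{invisibilityIT2}. That route derives $\Box(\mathrm V\to\Box\mathrm V)$ and then $\Box(\mathrm V\to\lnot\mathrm V)$, hence $\Box\lnot\mathrm V$, which contradicts $\Box\mathrm V$ by \textbf{D}. The point to verify is whether \textbf{R}, equivalently $\Box(\Box p\to\Box\Diamond p)$ after necessitation, combined with \textbf{C}, recovers the needed 4-instance $\Box(\Box\mathrm V\to\Box\Box\mathrm V)$. I expect this 4-recovery to be the crux of the argument.
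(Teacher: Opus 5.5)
Your Steps 1 and 2 follow the paper's proof. The paper obtains $\vdash\Box(\Box q\to q)$ as in your Step 1, by feeding necessitated \textbf{R} into Theorem~\ref{planerKDC}. It then appeals to Montague's theorem on the ground that \textbf{KDRC} admits fixed points. Your caveat, that the inconsistency needs diagonalization and cannot come from the modal logic alone (which the trivial frame validates), is in line with that.

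The genuine gap is Step~3. You leave it open and call it the crux, but it closes in one line. You already have $\vdash\Box\mathrm V$ as an outright theorem. The necessitation rule, which you used on the fixed point in Step~2, then gives $\vdash\Box\Box\mathrm V$. The \textbf{D}-instance $\Box\Box\mathrm V\to\lnot\Box\lnot\Box\mathrm V$ contradicts $\vdash\Box\lnot\Box\mathrm V$.

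Alternatively, the unboxed fixed point $\vdash\mathrm V\leftrightarrow\lnot\Box\mathrm V$ turns $\vdash\Box\mathrm V$ into $\vdash\lnot\mathrm V$. Necessitation gives $\vdash\Box\lnot\mathrm V$, and \textbf{D} applied to $\vdash\Box\mathrm V$ yields the contradiction.

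This is the content of the paper's appeal to Montague. Only the boxed schema $\Box(\Box q\to q)$ is available, not $\Box q\to q$ itself, so running the argument under the box and closing with \textbf{D} is the right way to spell it out. No \textbf{4}-like principle is needed.

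The routes you actually propose for Step~3 do not close as stated. \textbf{R} applied to $\mathrm V$ only gives $\vdash\Box\lnot\Box\lnot\mathrm V$, which yields neither $\Box\Box\mathrm V$ nor $\Box\lnot\mathrm V$. Passing $\Box\lnot\Box\mathrm V$ through \textbf{R} and then \textbf{C} with $p=\lnot\Box\mathrm V$ just returns $\Box\lnot\Box\mathrm V$.

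Your fallback Planer route, modelled on Theorem~\ref{invisibilityIT2}, would work. The \textbf{4}-recovery you were unsure about is in fact routine:
\textbf{R} gives $\Box p\to\Box\Diamond p$;
the dual form $\Diamond p\to\Diamond\Box p$ of \textbf{C} (substitute $\lnot p$ and contrapose), necessitated and distributed, gives $\Box\Diamond p\to\Box\Diamond\Box p$;
and \textbf{C} for $\Box p$ gives $\Box\Diamond\Box p\to\Box\Box p$.
It is a detour, though, since the required instance $\Box(\Box\mathrm V\to\Box\Box\mathrm V)$ is immediate once $\vdash\Box\Box\mathrm V$ is in hand.
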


\begin{proof}
    Since the normal modal logic \textbf{KDR} proves that  $\vdash \Box(\Box p\to \Box\Diamond p)$, $\mathbf{KDRC}\vdash \Box(p\to \Diamond p)$
by Theorem \ref{planerKDC}. Consequently $\mathbf{KDRC}\vdash \Box(\Box q\to q)$.   
As \(\mathbf{KDRC}\) admits fixed points,
Montague's Theorem, in
\parencite[154-155]{Montague1963},
implies that \(\mathbf{KDRC}\) is inconsistent.
\end{proof}

\subsection{KDC is reductionary}\leavevmode\par\vspace{2mm}

\textbf{KDC} proves $\Box\Box p\to \Box p$, for from  $\Box(\Box p\to \Diamond p)$, \textbf{K}, and 

\[
\Box(\Box p\to \Diamond p)\to (\Box\Box p \to \Box\Diamond p), 
\]

it follows that $\Box\Box p \to \Box\Diamond p$,  so $\mathbf{C}$'s characteristic axiom $\Box\Diamond p\to \Box p$ finishes the argument.

The theorem $\Box\Box p\to \Box p$ and axiom $\Box\Diamond p\to \Box p$ give very plausible \emph{reduction principles}, and it follows easily that \textbf{KDC} only admits the modalities 

\[
\Box p,\qquad
\Box \lnot p,\qquad
\lnot\Box p \wedge \lnot\Box\lnot p.
\]
So $\mathbf{KDC}$ abides by the motto of this section's epigraph ``Do not modalize beyond provability!'' -- as I think it should. $\mathbf{GL}$ and $\mathbf{KDR}$, on the other hand, do not have such reduction principles, and instead generate infinitely many proof modalities. It is in this sense that $\mathbf{KDC}$ has stabilized provability.

\subsection{The completeness of \textbf{KDC}}

The frame conditions for the $\mathbf{KDC}$-axioms are the first order sentences

\[
\forall x \exists y \, (xRy) \land \forall x \forall y \Big( xRy \rightarrow \forall z \big( xRz \rightarrow \exists w ( zRw \land yRw ) \big) \Big),
\] 

for the \textbf{D} and \textbf{C} axioms respectively. The \textbf{D}-condition is known as \emph{seriality}, and the \textbf{C}-condition as \emph{confluence}, and \emph{directedness}.

To establish the formal semantic foundations of our system, we invoke the landmark framework of correspondence and canonicity introduced originally by \parencite{Sahlqvist1975} at the Third Scandinavian Logic Symposium. The first fundamental result ensures that the syntactic shape of our axioms guarantees a first-order definable class of Kripke frames:

\begin{quote}
    ``Let $\tau$ be a modal similarity type, and let $\chi$ be a Sahlqvist formula over $\tau$. Then $\chi$ locally corresponds to a first-order formula $c_{\chi}(x)$ on frames. Moreover, $c_{\chi}(x)$ is effectively computable from $\chi$''. \parencite[The Sahlqvist Correspondence Theorem; Theorem 3.54, p.~166]{Blackburn2001} 
\end{quote}

Since both \textbf{D} ($\Box p \to \Diamond p$) and \textbf{C} ($\Box\Diamond p \to \Box p$) are Sahlqvist formulas, they are guaranteed to define first-order properties on frames. Furthermore, the second milestone guarantees that these properties transfer to the logic's canonical model:

\begin{quote}
    ``Every Sahlqvist formula is canonical for the first-order property it defines. Hence, given a set of Sahlqvist axioms $\Sigma$, the logic $\mathbf{K\Sigma}$ is strongly complete with respect to the class of frames $\mathsf{F}_{\Sigma}$ (that is, the first-order class of frames defined by $\Sigma$)''.\parencite[The Sahlqvist Completeness Theorem; Theorem 4.42, p.~212]{Blackburn2001},
\end{quote}

By setting $\Sigma = \{\mathbf{D}, \mathbf{C}\}$, it follows immediately that \textbf{KDC} is sound and strongly complete with respect to the class of serial, confluent Kripke frames.

This frame-theoretic behavior signals a foundational departure from the standard G\"odel-L\"ob (\textbf{GL}) paradigm. While \textbf{GL} is bounded by irreflexivity and 
permits pathological models where a theory asserts its own inconsistency, \textbf{KDC} considers 
the single-world reflexive frame ($wRw$) as an ideal and as a valid semantic baseline. In this truth-directed system, the confluence axiom \textbf{C} acts as a geometric filter: rather than forcing an open-ended extensional ascent to ever-stronger meta-theories, 

Unlike structurally flat systems such as $\mathbf{S5}$, $\mathbf{KDC}$ has a desirable modal asymmetry for the notion of provability. While it--- given its Axiom \textbf{C} and theorem that $\vdash\Box\Box A\to\Box A$---is fully reductive along its $\Box$ axis---and so collapses iterative proof statements into non-iterative ones, it is non-reductive along its $\Diamond$ axis. Combined with the confluence imposed upon KDC-frames by Axiom \textbf{C}, this preserves the delicate and desired structural hierarchy of iterative consistency statements, or $\Diamond$ theses, fixing \textbf{KDC} as a truth-directed provability logic.

We have in this section focused entirely upon a modal logical discussion of the provability theory \textbf{KDC}. This will be rectified in the next section.

\subsection{\textbf{KDC} is the provability theory of \textbf{VT}}

Visibility theory does not use operators apart from the logical ones. Instead it has predicates on Bend codes, as $\Visible$. 

Our discussion of \textbf{KDC} in the previous subsection was driven by the fact that it is common to use modal logics in the logic of provability literature. And that practice is useful in many contexts. But I take such uses of $\Box$ and $\Diamond$ to be imitations, and in the last analysis inadequate. The intention now is that provability theory \textbf{KDC} is formulated canonically in this section.

\textbf{VT} has
the syntactic provability predicate $\spadesuit$, and its dual $\clubsuit$. These are predicates of the Bend coded formulas, and their behavior is directed by axiomatic fiat in \textbf{VT}. So there is not a detour via arithmetic at this point. For we are not now aiming at proving the incompleteness of arithmetic. We rather aim at proving it for \text{VT}. 
In accordance with the foregoing discussion, the following shall hold for $\spadesuit$ and $\clubsuit$ in \textbf{KDC} -- and by extension \textbf{VT}.

\medskip
Provability theory \textbf{KDC}:

\begin{enumerate}[label=\textbf{(KDC\arabic*)}, leftmargin=*, labelindent=3em]
\setlength{\itemsep}{0.6em}

\item\label{dfn:KDCdfn} $\ \ \ \clubsuit\Bendcode{A}\coloneqq\lnot\spadesuit\Bendcode{\lnot A}.$ 

\item \label{ax:KDCK}
$\vdash \spadesuit\Bendcode{A \to B}\to (\spadesuit\Bendcode{A}\to\spadesuit\Bendcode{B}) $.

\item \label{ax:KDCD}
$\vdash \spadesuit\Bendcode{A}\to\clubsuit\Bendcode{A}$

\item \label{ax:KDCC}
$\vdash \spadesuit\Bendcode{\clubsuit\Bendcode{A}}\to\spadesuit\Bendcode{A}$

\item \label{rule:KDCR}
$\vdash A \ \Rightarrow \ \ \vdash \spadesuit\Bendcode{A}$.

\end{enumerate}

\subsection{The incompleteness of \textbf{VT}}

\begin{thm}
There is a sentence
which $\mathbf{VT}$ neither proves nor refutes.
\end{thm}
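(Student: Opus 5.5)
The plan is to run a Gödel–Rosser style argument internally, using the provability predicate $\spadesuit$ of \textbf{KDC} in place of $\Visible$. The fixed point comes from Theorem~\ref{presentationaldiagonallemma}, and the side conditions are handled by the \textbf{KDC} principles. First, apply the Diagonal Lemma with $\mathfrak A(x_n)$ taken as $\lnot\spadesuit(x_n)$. This yields a sentence $\mathrm G$ with
\[
\vdash \mathrm G\leftrightarrow\lnot\spadesuit\Bendcode{\mathrm G},
\]
exactly parallel to Corollary~\ref{visibilityparadoxequation}. The candidate undecided sentence is $\mathrm G$.

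\textbf{Unprovability of $\mathrm G$.} Suppose $\vdash\mathrm G$. Then rule \ref{rule:KDCR} gives $\vdash\spadesuit\Bendcode{\mathrm G}$. The fixed point gives $\vdash\lnot\spadesuit\Bendcode{\mathrm G}$. So \textbf{VT} would be inconsistent, and I argue under the standing assumption that \textbf{VT} is consistent. That assumption would be justified by the revision semantics of \S\ref{revisionsemantics}, together with an intended interpretation of $\spadesuit$ at the closure ordinal $\kappa^{\ast}$.

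\textbf{Unrefutability of $\mathrm G$.} Suppose $\vdash\lnot\mathrm G$. The fixed point then gives $\vdash\spadesuit\Bendcode{\mathrm G}$, and \ref{rule:KDCR} gives $\vdash\spadesuit\Bendcode{\lnot\mathrm G}$. By \ref{ax:KDCD} and \ref{dfn:KDCdfn}, $\spadesuit\Bendcode{\mathrm G}\to\lnot\spadesuit\Bendcode{\lnot\mathrm G}$. This contradicts the two derived facts, again under consistency. Seriality \textbf{D} plays here the role that $\omega$-consistency plays in Gödel's original argument. This is why a Rosser-type predicate with \textbf{KD} suffices, and no appeal to soundness or $\omega$-consistency is needed.

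\textbf{Main obstacle.} The difficulty is not the derivations, which are routine, but making the hypothesis ``\textbf{VT} is consistent'' legitimate for the theory enlarged by $\spadesuit$ and (KDC1--5). Some care is also needed to check that \textbf{C} does not interact with (VA1--9) to collapse $\spadesuit$ into something Montague-inconsistent. The plan here is to interpret $\spadesuit\Bendcode{A}$ semantically as stable truth from some stage onward, just like $\Visible$, or else by a serial confluent frame via the completeness of \textbf{KDC} noted above. I would then check (KDC2--4) at $\kappa^{\ast}$ as was done for (IA1--2). I would avoid any T-like principle $\spadesuit\Bendcode{A}\to A$, since together with $\mathrm G$ that would reproduce the Visionary paradox. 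Given a model of \textbf{VT} in which $\spadesuit\Bendcode{\bot}$ fails, consistency follows, and with it the theorem.
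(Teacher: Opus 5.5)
Your two derivations are the paper's proof: diagonalize on $\lnot\spadesuit$, refute $\vdash\mathrm G$ with \ref{rule:KDCR}, and refute $\vdash\lnot\mathrm G$ with \ref{rule:KDCR} and \ref{ax:KDCD}. The paper likewise reads ``$\vdash\lnot U$'' and ``Inconsistency!'' as $\not\vdash U$ and $\not\vdash\lnot U$, so it also assumes consistency tacitly. You are right that everything hinges on that assumption, but it cannot be discharged, because with \ref{ax:KDCC} your own $\mathrm G$ yields a contradiction.
\begin{itemize}
\item Necessitating $\mathrm G\to\lnot\spadesuit\Bendcode{\mathrm G}$ and applying \ref{ax:KDCK} gives $\vdash\spadesuit\Bendcode{\mathrm G}\to\spadesuit\Bendcode{\lnot\spadesuit\Bendcode{\mathrm G}}$.
\item By \ref{rule:KDCR} and \ref{ax:KDCK}, $\spadesuit$ respects provable equivalence. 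So $\lnot\spadesuit\Bendcode{\mathrm G}$ may be exchanged under $\spadesuit$ for $\clubsuit\Bendcode{\lnot\mathrm G}=\lnot\spadesuit\Bendcode{\lnot\lnot\mathrm G}$.
\item Then \ref{ax:KDCC} at $A:=\lnot\mathrm G$ gives $\vdash\spadesuit\Bendcode{\lnot\spadesuit\Bendcode{\mathrm G}}\to\spadesuit\Bendcode{\lnot\mathrm G}$.
\end{itemize}
Hence
\[
\vdash\spadesuit\Bendcode{\mathrm G}\to\spadesuit\Bendcode{\lnot\mathrm G},
\qquad
\vdash\spadesuit\Bendcode{\mathrm G}\to\lnot\spadesuit\Bendcode{\lnot\mathrm G},
\]
the second by \ref{ax:KDCD}. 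So $\vdash\lnot\spadesuit\Bendcode{\mathrm G}$, whence $\vdash\mathrm G$ by the fixed point, and $\vdash\spadesuit\Bendcode{\mathrm G}$ by \ref{rule:KDCR}. Given the Diagonal Lemma that you and the paper both invoke, $\mathbf{VT}$ with (KDC1--5) is inconsistent. It therefore decides every sentence, the statement is false, and neither argument can be completed. Modulo double negation, \textbf{C} is $\Box\lnot\Box p\to\Box\lnot p$, the converse of Kurahashi's \textbf{R}. Here it plays exactly the role \textbf{4} plays in G\"odel's second theorem. By contrast, \textbf{K}, \textbf{D} and necessitation alone are harmless, since Kurahashi's Rosser predicates realise \textbf{KD}.

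Your two suggested routes to a model also fail on their own terms. Reading $\spadesuit$ as stable truth ``just like $\Visible$'' cannot validate \ref{rule:KDCR}. $\mathbf{VT}$ proves $\mathrm V$ and also $\lnot\Visible\Bendcode{\mathrm V}$ (\S\ref{invisibleresolution}), so necessitation is exactly what stable truth lacks; $\Visible$ escapes inconsistency only because $\mathbf{VT}$ has present-but-invisible theorems. Sahlqvist completeness gives Kripke models of propositional \textbf{KDC}, but their valuations are free, so nothing enforces $\mathrm G\leftrightarrow\lnot\Box\mathrm G$. They are not models of a theory with diagonal fixed points, and by the derivation above no such model exists. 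Nor does the danger come from an interaction of \textbf{C} with (VA1--9); (KDC1--5) and a single fixed point suffice. An independence result along your lines requires dropping or weakening \textbf{C}, or restricting \ref{rule:KDCR}. With \textbf{KD} alone your two derivations go through verbatim, though the consistency of the resulting theory would still have to be proved.
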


\begin{proof}
    Given Theorem \ref{presentationaldiagonallemma}, there is a sentence $U$ such that

\begin{equation}\label{fixpointnotspade}
     \vdash U\leftrightarrow\lnot\spadesuit\Bendcode{U}.
\end{equation}

Assume $\vdash U$. By inference rule \ref{rule:KDCR}, $\vdash \spadesuit \Bendcode{U}$. Given Equation \eqref{fixpointnotspade},  $\vdash\lnot U$. So it follows that $\not\vdash U$. 

Suppose $\vdash\lnot U$. By inference rule \ref{rule:KDCR}, $\vdash\spadesuit\Bendcode{\lnot U}$. By Axiom \ref{ax:KDCD} and Definition \ref{dfn:KDCdfn}, $\vdash\lnot\spadesuit\Bendcode{U}$. Inconsistency! So $\not\vdash \lnot U$.
\end{proof}

\newpage

\section{Identity}\label{identitysection}

The aim of the following constructions is to obtain an adequate,
fully classical theory of identity from the minimum of axiomatic
resources necessary beyond those already present in $\mathbf{VT}$.

Whitehead, who was Quine's doctoral advisor, according to \parencite{Forster2019}  
advised Quine that $\{y \mid x \in y\}$ should be called the \emph{essence} of $x$.

\subsection{Co-essentiality}
We define the identity relation by means of co-essentiality, akin to the relation called membership congruency by Fraenkel and Bar-Hillel \parencite[27]{Fraenkel1973}, though not used in \parencite{Fraenkel1958}.
  \begin{dfn}{~}\label{coessentialdefinition}        
		\[\textnormal{Visions} \ a \ \textnormal{and} \ b \ \textnormal{are \emph{co-essential} just if} \ \forall u(a\in u\to b\in u).\]

        \EndDef

  \end{dfn}

  \begin{dfn}[(Identity as co-essentiality)]\label{identitydefinition} 
			\[a= b \ \coloneqq \forall u(a\in u\to b\in u)\]
            \EndDef
		\end{dfn}

        The justification given for the analogous set theoretic definition \(\ast13\!\cdot\!01\) in \parencite[175]{whitehead_russell_1910} will not  justify  Definition \ref{identitydefinition} in $\mathbf{VT}$. The symmetry  of the definiens in Definition  \ref{identitydefinition} is in $\mathbf{VT}$ a consequence of 
        Theorem \ref{SYMMETRY} below, and the proof of the latter  does not, as  the Principia Mathematica justification of \(\ast13\!\cdot\!01\), appeal to   \emph{predicativity} or  
        anything like an
        \emph{Axiom of reducibility}. 
		
\noindent

\subsection{Visibility and persistence of identity}

\begin{axiom}[on the visibility of inequality]\label{inequalityaxiom}
    \[\vdash \Visible\Bendcode{\forall x\forall y(x\neq y\to \Visible\Bendcode{x\neq y})}.\]
\end{axiom}
\begin{remark}[on Axiom \ref{inequalityaxiom}]
    Given Postulate \ref{semanticcorrelation}, 
\end{remark}

\begin{thm}[on the visibility of identity]\label{identitypersistence}
    \[\vdash \Visible\Bendcode{\forall x\forall y(x= y\to \Visible\Bendcode{x=y})}.\]
\end{thm}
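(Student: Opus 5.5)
The plan is to reason entirely inside the scope of $\Visible$, using the normal-modal behaviour that the visible axioms give to $\Visible$: necessitation for classical logic via \ref{ax:V1}, distribution via the outer visibility of \ref{ax:V4}, and the quantifier shifts \ref{ax:V2}, \ref{ax:V3} (with the converse-Barcan principle of footnote~\ref{barcanfootnote}). The key idea is that identity is co-essentiality (Definition~\ref{identitydefinition}), so $x=y$ can be reached from a suitable instance of \ref{ax:VCA}. The natural witness is the vision $e_x\coloneqq\Anschauung{z}{z=x}$, or, to avoid circularity, $\Anschauung{z}{\lnot z\neq x}$, which is where Axiom~\ref{inequalityaxiom} enters.

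\textbf{Step 1.} From \ref{ax:VCA}, visibly, $\forall w\bigl(w\in e_x\leftrightarrow\Visible\Bendcode{w=x}\bigr)$.

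\textbf{Step 2.} Show visibly that $\Visible\Bendcode{x=x}$. Reflexivity $x=x$ is a classical-logic theorem, $\forall u(x\in u\to x\in u)$, so \ref{ax:V1} gives $\Visible\Bendcode{\forall x\, x=x}$. Then \ref{ax:V3} and the converse-Barcan principle give $\forall x\Visible\Bendcode{x=x}$. Applying \ref{ax:V1}, \ref{ax:V4} and \ref{ax:V3} once more puts this inside an outer $\Visible$. Consequently, visibly, $x\in e_x$.

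\textbf{Step 3.} Under the visible hypothesis $x=y$, instantiate $u\coloneqq e_x$ to get $y\in e_x$, hence $\Visible\Bendcode{y=x}$ by Step 1. This gives symmetry in visible form, so what remains is to flip $y=x$ into $x=y$ under $\Visible$. For that we need visible symmetry $\Visible\Bendcode{\forall x\forall y(y=x\to x=y)}$, which is the content of Theorem~\ref{SYMMETRY} referred to earlier. If that theorem is stated later, we instead rerun the same argument with the vision $\Anschauung{z}{x=z}$, whose membership condition needs no flipping: the hypothesis $x\in u$ with $u=\Anschauung{z}{x=z}$ uses $\Visible\Bendcode{x=x}$ from Step 2, and then $y\in u$ yields $\Visible\Bendcode{x=y}$ directly. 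This variant seems cleaner, and I would adopt it as the main route.

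\textbf{Step 4.} Generalize over $x,y$ inside $\Visible$. To justify reasoning under $\Visible$, assemble the chain as a visible conditional and distribute using \ref{ax:V4}, which itself is only visibly visible. This last point is the main obstacle: \ref{ax:V4} yields $\Visible\Bendcode{\Visible\Bendcode{A\to B}\to(\Visible\Bendcode{A}\to\Visible\Bendcode{B})}$ rather than the unboxed K-principle. Every step must therefore stay one level inside $\Visible$, and we must never appeal to the invisible \ref{ax:IA1} inside the scope. I would handle this by noting that all the ingredients (\ref{ax:VCA}, \ref{ax:V1}, \ref{ax:V3}, \ref{ax:V4}) are visible, so the whole derivation is carried out under one outer $\Visible$ via \ref{ax:V1} and \ref{ax:V4}. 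The other delicate point is the visibility of \ref{ax:VCA}'s instance for a vision with a free parameter $x$; here one relies on the schema being stated for arbitrary $A(y)$ with parameters, together with \ref{ax:V3} to pass the universal closure through. Axiom~\ref{inequalityaxiom} is then not strictly needed for this direction, though it could serve, via \ref{ax:V6}-style arguments, to show that identity statements are orthodox.
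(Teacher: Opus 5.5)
Your adopted route in Step~3 is exactly the paper's proof: take the vision $\Anschauung{z}{x=z}$, get $x$ into it from $\Visible\Bendcode{x=x}$ by \textbf{(VCA)}, transfer to $y$ by co-essentiality, and apply \textbf{(VCA)} again to read off $\Visible\Bendcode{x=y}$. The paper then closes with ``generalization and the reasoning's visibility''. When you make that last phrase explicit, two steps rest on principles that cannot do the job.

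First, running ``$x\in\Anschauung{z}{x=z}$'' one level inside the outer $\Visible$ requires $\vdash\Visible\Bendcode{\Visible\Bendcode{x=x}}$. \textbf{(VA1)}, \textbf{(VA3)} and \textbf{(VA4)} do not supply this second layer. \textbf{(VA1)} only makes classical theorems visible, and \textbf{(VA3)} and \textbf{(VA4)} only redistribute $\Visible$'s that are already present. Your detour through $\Visible\Bendcode{\forall x\,x=x}$ and \textbf{(VA3)} still needs $\Visible\Bendcode{\Visible\Bendcode{\forall x\,x=x}}$ to detach. The missing ingredient is \textbf{(IT2)}, $\vdash\Visible\Bendcode{A}\to\Visible\Bendcode{\Visible\Bendcode{A}}$, applied outside every scope to $\vdash\Visible\Bendcode{x=x}$.

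Second, generalizing over $x,y$ inside $\Visible$ is the Barcan direction. From $\vdash\Visible\Bendcode{x=y\to\Visible\Bendcode{x=y}}$ with $x,y$ free, you generalize and apply \textbf{(IA2)} twice. \textbf{(VA3)} and the converse-Barcan principle go from $\Visible\Bendcode{\forall x A}$ to $\forall x\Visible\Bendcode{A}$, so they cannot move a quantifier into the scope. The same applies to your plan for closing the parameter of the \textbf{(VCA)} instance: either that closure is already inside $\Visible$, and plain instantiation under $\Visible$ suffices, or you need \textbf{(IA2)}.

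Relatedly, the obstacle you single out in Step~4 is not real. Applying \textbf{(IA1)} twice to \textbf{(VA4)}, outside any scope, yields the unboxed principle $\vdash\Visible\Bendcode{A\to B}\to(\Visible\Bendcode{A}\to\Visible\Bendcode{B})$. That is exactly what licenses modus ponens one level inside $\Visible$. The only restriction is that \textbf{(IA1)} not be used as a premise inside the scope, and your derivation respects this. With \textbf{(IT2)}, \textbf{(IA2)} and this unboxed K substituted for your attributions, your argument becomes a correct, fully spelled-out version of the paper's. You are also right that Axiom~\ref{inequalityaxiom} plays no role here; the paper's proof does not use it either.
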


\begin{proof}
   From Definition \ref{identitydefinition} we have that 
   \[
   \vdash a=b\to (a\in\Anschauung{x}{a=x}\to b\in\Anschauung{x}{a=x}).
   \]

   $\vdash a\in\Anschauung{x}{x=a}$ holds as by \textbf{VCA}:~p.~\pageref{ax:VCA}  and the fact that $\vdash\Visible\Bendcode{a=a}$. Therefore, 

    \[
   \vdash a=b\to b\in\Anschauung{x}{a=x}).
   \]

   By \textbf{VCA}:~p.~\pageref{ax:VCA} on the consequent of the formula in the last step,

   \[
   \vdash a=b\to \Visible\Bendcode{a=b}.
   \]

Generalization and the reasoning's visibility justify the target equation.
   
\end{proof}

\begin{thm}[on identity and inequality persistence.]\leavevmode\par\vspace{2mm}\label{idinpersistence}

If for some ordinal $\beta$, $\vDash^{\beta} a=b$, then for \emph{any} ordinal $\gamma$, $\vDash^{\gamma} a=b$.  

\smallskip

If for some ordinal $\beta$, $\vDash^{\beta} c\neq d$, then for \emph{any} ordinal $\gamma$, $\vDash^{\gamma} c\neq d$.       
\end{thm}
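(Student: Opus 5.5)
The plan is to move from the syntactic visibility results for identity and inequality, namely Theorem~\ref{identitypersistence} and Axiom~\ref{inequalityaxiom}, to their semantic counterparts, using Postulate~\ref{semanticcorrelation} and the revision clause for $\Visible$ from \S\ref{revisionsemantics}. I read $\vDash^\beta$ as the stage relation $\Vdash^\beta$.

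The first step is to extract stage-wise principles. Theorem~\ref{identitypersistence} is a visible theorem, so by Postulate~\ref{semanticcorrelation} (granting, as the Remark allows, that the stage-$0$ set respects it) we get $\Vdash^\alpha \forall x\forall y(x=y\to\Visible\Bendcode{x=y})$ for every $\alpha$. Axiom~\ref{inequalityaxiom} gives the same for $\neq$. Instantiating yields, at every stage $\alpha$,
\[
\Vdash^\alpha a=b \;\Rightarrow\; \Vdash^\alpha \Visible\Bendcode{a=b},\qquad \Vdash^\alpha c\neq d \;\Rightarrow\; \Vdash^\alpha \Visible\Bendcode{c\neq d}.
\]

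The second step is forward persistence, proved by transfinite induction on $\gamma\ge\beta$. Suppose $\Vdash^\beta a=b$ and that $a=b$ holds at every $\delta$ with $\beta\le\delta<\gamma$. If $\gamma=\delta+1$, then $\Vdash^{\gamma}\Visible\Bendcode{a=b}$ by the revision clause (take the witness stage to be $\delta$), and Axiom~\ref{ax:IA1}, read stage-wise, should give $a=b$ at $\gamma$. Note that the paper only justifies Axiom~\ref{ax:IA1} at $\kappa^\ast$. So at successor stages I would argue directly instead. By Definition~\ref{identitydefinition}, $a=b$ is $\forall u(a\in u\to b\in u)$, and by \ref{ax:VCA} (held at all stages via Postulate~\ref{semanticcorrelation}) each instance reduces to $\Visible\Bendcode{A(a)}\to\Visible\Bendcode{A(b)}$. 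These visibility facts at $\gamma$ are settled by what happened at earlier stages, where $a=b$ and hence substitutivity held throughout. If $\gamma$ is a limit, the tail from $\beta$ witnesses $\Visible\Bendcode{a=b}$ at $\gamma$ in the same way. The inequality case is symmetric, with $\Visible\Bendcode{c\neq d}$ propagated likewise.

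The third step is backward persistence, from $\beta$ down to $\gamma<\beta$. Here I would argue by contradiction. If $a\neq b$ at some $\gamma<\beta$, then forward persistence for $\neq$, already established, gives $a\neq b$ at $\beta$, which contradicts $\Vdash^\beta a=b$, since each stage is consistent because $\Vdash^0$ is maximal consistent and the revision preserves this. Symmetrically, an identity at an earlier stage would propagate forward and clash with $c\neq d$ at $\beta$. So the two directions interlock: each half's backward direction is obtained from the other half's forward direction, and that is why the theorem pairs identity with inequality.

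I expect the successor step of the forward induction to be the main obstacle. It requires turning $\Visible\Bendcode{a=b}$ at stage $\gamma$ into $a=b$ at stage $\gamma$, a reflection property that the paper only guarantees at $\kappa^\ast$. My first attempt would be to unfold $=$ via \ref{ax:VCA} into visibility statements about formulas in which $a$ and $b$ occur, and to use the induction hypothesis to show these visibility statements agree for $a$ and $b$ at $\gamma$. Failing that, I would lean on Postulate~\ref{semanticcorrelation} applied to the visible instance $\Visible\Bendcode{a=b}\to a=b$, if that instance is visibly derivable from Theorem~\ref{identitypersistence} together with the definition of $=$.
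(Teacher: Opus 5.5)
\textbf{There is a genuine gap.} Your overall structure is sound and matches the paper's. You extract stage-wise principles via Postulate~\ref{semanticcorrelation}, propagate forward through the revision clause, and obtain backward persistence from the other half's forward persistence; the last is in effect the paper's argument from stage~$0$. The gap is the step you flag yourself, and it is not a detail. At the successor step and at the limit step of your induction you only reach $\Vdash^\gamma\Visible\Bendcode{a=b}$, never $\Vdash^\gamma a=b$. The backward half then inherits the gap, since it relies on forward persistence.

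Neither of your two remedies closes it.
\begin{enumerate}
\item The \ref{ax:VCA}-unfolding does not work. From $a=b$ at stage $\delta$, the definition of $=$ together with \ref{ax:VCA} only yields $\Visible\Bendcode{A(a)}\to\Visible\Bendcode{A(b)}$ at $\delta$. Your successor step instead needs plain substitutivity $A(a)\to A(b)$ at $\delta$ for arbitrary $A$. You have not established that, and at stage $0$ it would have to be imposed on the maximal consistent set. The paper's own Fungibility proof (Theorem~\ref{fungibility}) passes through exactly the reflection step you are missing, so that route is circular.
\item The fallback looks in the wrong place. Theorem~\ref{identitypersistence} supplies only the direction $x=y\to\Visible\Bendcode{x=y}$.
\end{enumerate}

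\textbf{The missing idea:} reflection for each relation comes from persistence of the \emph{other} relation together with \ref{ax:V5}.
\begin{enumerate}
\item By Postulate~\ref{semanticcorrelation}, \ref{ax:V5} holds unvisibilized at every stage, so $\Vdash^\alpha\Visible\Bendcode{a=b}\to\lnot\Visible\Bendcode{a\neq b}$ for every $\alpha$.
\item Contraposing the stage-wise instance of Axiom~\ref{inequalityaxiom}, which you already extracted, gives $\Vdash^\alpha\lnot\Visible\Bendcode{a\neq b}\to a=b$.
\item Hence $\Vdash^\alpha\Visible\Bendcode{a=b}\to a=b$ at every stage, with no appeal to \ref{ax:IA1}.
\item Symmetrically, Theorem~\ref{identitypersistence} together with \ref{ax:V5} gives $\Vdash^\alpha\Visible\Bendcode{c\neq d}\to c\neq d$.
\end{enumerate}
This is the same move the paper uses in the proof of Symmetry (Theorem~\ref{SYMMETRY}). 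It is also what the paper's proof compresses into ``it is easy to show'' the stage-wise biconditionals $x=y\leftrightarrow\Visible\Bendcode{x=y}$ and $x\neq y\leftrightarrow\Visible\Bendcode{x\neq y}$.

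With these biconditionals in hand, your forward induction closes at successors and limits alike, and your backward argument then works as written. So the two halves interlock already in the forward direction, not only in the backward one as you suggest.
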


\begin{proof}
    Given Postulate \ref{semanticcorrelation} and Axiom \ref{inequalityaxiom}, it follows that for \emph{any} ordinal $\gamma$, 
 
    \begin{equation}
    \vDash^{\gamma} \Visible\Bendcode{\forall x\forall y(x\neq y\to \Visible\Bendcode{x\neq y})}.
        \end{equation}

The proof of Theorem \ref{identitypersistence} reveals that only axiomatic principles are needed in it, so by Postulate \ref{semanticcorrelation}, 

    \begin{equation}\label{identityVisEq}
    \vDash^{\gamma} \Visible\Bendcode{\forall x\forall y(x=y\to \Visible\Bendcode{x=y})}.
        \end{equation}

It is easy to show that the next two equations follow.

 \begin{equation}\label{inequalityVis}
    \vDash^{\gamma} \Visible\Bendcode{\forall x\forall y(x\neq y\leftrightarrow \Visible\Bendcode{x\neq y})}.
        \end{equation}

         \begin{equation}\label{identityVis}
    \vDash^{\gamma} \Visible\Bendcode{\forall x\forall y(x=y\leftrightarrow \Visible\Bendcode{x=y})}.
        \end{equation}

        In any revision sequence, $\vDash^0 a=b$ or $\vDash a\neq b$, according to what is chosen for its maximal consistent set of formulas. For the semantics of the revision semantics, as stated in \S~\ref{revisionsemantics}, only requires the fulfillment of the condition $\Sigma \gamma\prec\beta\Pi \delta(\gamma\leq \delta<\beta\Rightarrow \ \vDash^{\delta}A)$ if $0\prec\beta$, to validate $\vDash^{\beta} A$. If $\vDash^0 a=b$, $\vDash^{\gamma} a=b$ for \emph{any} ordinal $\gamma$ because Equation \ref{identityVis} is true. The analogous argument establishes that $\vDash^{\gamma} a\neq b$ for \emph{any} ordinal if $\vDash^0 a\neq b$.

\end{proof}

\subsection{The adequacy of identity as co-essentiality}\label{identityadequacy}

\begin{thm}[\textbf{Orthodoxy, equivalence and fungibility of identity}:]\label{Identitytheorem}
			\begin{enumerate}[itemindent =0.4cm, before=\leavevmode, label=\((\arabic*)\),ref=  \ref{Identitytheorem}.\(\arabic*\)]
				\item\label{Id1}\,\, \(\vdash\ \Visible\Bendcode{\Visible\Bendcode{a= b}\vee \Visible\Bendcode{a\neq b}}\)\hfill Orthodoxy
				\item \,\, \(\vdash \Visible\Bendcode{a= a}.\) \hfill Reflexivity
				\item \,\, \(\vdash \Visible\Bendcode{a= b\wedge b= c\to a= c}.\)\hfill Transitivity
				\item\label{SYMMETRY} \,\, \(\vdash\Visible\Bendcode{a= b\to b= a}.\)\hfill Symmetry
				\item\label{fungibility} \,\,  \(\vdash  \Visible\Bendcode{a= b\to (A^a_v\to A^b_v)}.\) \hfill Fungibility
			\end{enumerate}
		\end{thm}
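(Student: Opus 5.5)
The plan is to obtain each clause as a visible theorem by deriving the underlying present theorem from purely axiomatic resources, and then lifting it to visibility. The lifting uses the fact, already exploited in the proof of Theorem~\ref{identitypersistence}, that reasoning drawing only on visible axioms is itself visible; Postulate~\ref{semanticcorrelation} backs this up semantically. So for each clause I derive the bare formula from \ref{ax:VCA}, Definition~\ref{identitydefinition}, classical logic (\ref{ax:V1}) and the visible distribution principles \ref{ax:V4}, \ref{ax:V3}, and then appeal to that visibility of the reasoning.

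\emph{Reflexivity and transitivity.} Reflexivity is immediate: $\forall u(a\in u\to a\in u)$ is a classical theorem, so \ref{ax:V1} gives $\vdash\Visible\Bendcode{a=a}$. Transitivity is likewise a classical consequence of Definition~\ref{identitydefinition}, since it is just the chaining of implications under $\forall u$. Hence \ref{ax:V1} applies again.

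\emph{Symmetry.} This is the key step. I follow the proof of Theorem~\ref{identitypersistence}, but use the vision $\Anschauung{x}{x=a}$.
\begin{enumerate}
\item By \ref{ax:VCA} together with $\Visible\Bendcode{a=a}$, we get $a\in\Anschauung{x}{x=a}$.
\item From $a=b$ and Definition~\ref{identitydefinition} we then get $b\in\Anschauung{x}{x=a}$.
\item By \ref{ax:VCA} again this gives $\Visible\Bendcode{b=a}$, and hence $b=a$ by \ref{ax:IA1}.
\end{enumerate}
The obstacle is that \ref{ax:IA1} is invisible (Theorem~\ref{thm:invisibilityIA1}), so the last step cannot simply be carried inside $\Visible$. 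To avoid this I will work inside the visible context throughout. From \ref{ax:VCA} and \ref{ax:V4} I obtain $\Visible\Bendcode{a=b\to\Visible\Bendcode{b=a}}$. I then need to pass from this to $\Visible\Bendcode{a=b\to b=a}$.

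For that passage I would try the following route.
\begin{enumerate}
\item Theorem~\ref{identitypersistence} gives $\Visible\Bendcode{a=b\to\Visible\Bendcode{a=b}}$.
\item Combining with the previous step yields $\Visible\Bendcode{\Visible\Bendcode{a=b}\to\Visible\Bendcode{b=a}}$.
\item Lemma~\ref{planer} then delivers $\Visible\Bendcode{a=b\to b=a}$.
\end{enumerate}
Step 2 needs a visible form of the combination, namely composing $a=b\to\Visible\Bendcode{a=b}$ with $a=b\to\Visible\Bendcode{b=a}$ under $\Visible$. If that fails, the fallback is Axiom~\ref{inequalityaxiom} applied contrapositively: $b\neq a\to\Visible\Bendcode{b\neq a}$, and with \ref{ax:V5} this is incompatible with $\Visible\Bendcode{b=a}$. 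This yields $\Visible\Bendcode{b=a}\to b=a$ visibly for identity statements specifically, which is exactly what the symmetry argument needs.

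\emph{Orthodoxy.} Visibly we have both $a=b\to\Visible\Bendcode{a=b}$ (Theorem~\ref{identitypersistence}) and $a\neq b\to\Visible\Bendcode{a\neq b}$ (Axiom~\ref{inequalityaxiom}). Instantiating both and applying classical excluded middle inside $\Visible$ via \ref{ax:V1} and \ref{ax:V4}, together with the converse Barcan principle of footnote~\ref{barcanfootnote}, gives $\Visible\Bendcode{\Visible\Bendcode{a=b}\vee\Visible\Bendcode{a\neq b}}$.

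\emph{Fungibility.} This is where the diagonal-free use of comprehension pays off. Take $u=\Anschauung{v}{A}$.
\begin{enumerate}
\item Symmetry lets me use $b=a$ as well as $a=b$.
\item Given $a=b$ and $A^a_v$, I want $\Visible\Bendcode{A^a_v}$, so that \ref{ax:VCA} gives $a\in u$.
\end{enumerate}
Step 2 is the second obstacle, since $A$ need not be visible. I would handle it by arguing by contraposition, using $\Anschauung{v}{\lnot A}$. Suppose $\lnot A^b_v$. I would try to obtain $b\in\Anschauung{v}{\lnot A}$ and push it to $a$ by the symmetric identity. However, this still requires visibility of $\lnot A^b_v$.

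So my first attempt will instead be an induction on the complexity of $A$, using \ref{ax:VCA} only at atomic membership contexts. Atomic cases are handled as follows.
\begin{enumerate}
\item For $a\in t$, use $u=t$ directly from Definition~\ref{identitydefinition}.
\item For $t\in a$, use the vision $\Anschauung{x}{t\in x}$, whose visibility comes from orthodoxy-type reasoning.
\end{enumerate}
The connectives and quantifiers then go through classically inside $\Visible$ via \ref{ax:V1}.
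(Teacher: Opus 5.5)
Reflexivity, transitivity and orthodoxy are fine and follow the paper's arguments. In symmetry, step~2 of your Planer route does not work as stated. The formulas $a=b\to\Visible\Bendcode{a=b}$ and $a=b\to\Visible\Bendcode{b=a}$ share an antecedent, so they cannot be composed into $\Visible\Bendcode{a=b}\to\Visible\Bendcode{b=a}$; that would need the converse $\Visible\Bendcode{a=b}\to a=b$. Your fallback supplies exactly this reflection for identity statements. Composing $\Visible\Bendcode{a=b\to\Visible\Bendcode{b=a}}$ with $\Visible\Bendcode{\Visible\Bendcode{b=a}\to b=a}$ (from Axiom~\ref{inequalityaxiom} and \ref{ax:V5}) is the paper's proof, carried out inside $\Visible$.

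The genuine gap is fungibility. Your induction on $A$ fails at the atom $t\in a$. With $\Anschauung{x}{t\in x}$ you need $t\in a\to\Visible\Bendcode{t\in a}$, which is your original obstacle restricted to atoms. It is not an orthodoxy fact, since arbitrary visions are not orthodox. At best you get it presently for vision abstracts via \ref{ax:VCA} and \ref{thm:IT2}, and you return from $\Visible\Bendcode{t\in b}$ to $t\in b$ via \ref{ax:IA1}. Both of these are invisible, so this case cannot supply the \emph{visible} induction hypothesis that the non-classical cases need.

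Those non-classical cases are genuinely present. Substitution enters Bend codes, so $A$ may contain $\Visible\Bendcode{B}$ with $v$ free. Atoms may also carry vision abstracts containing $v$, as in $\Anschauung{y}{B(y,v)}\in s$. That atom requires co-essentiality of two distinct abstracts, which is itself an instance of what you are proving. None of these cases is covered by classical reasoning via \ref{ax:V1}.

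The missing idea, which is the paper's, removes any need for induction: take $u=\Anschauung{u}{A^a_v\to A^u_v}$.
\begin{enumerate}
\item $a\in u$ holds because $A^a_v\to A^a_v$ is a tautology, visible by \ref{ax:V1}. So visibility of $A^a_v$ is never needed.
\item $a=b$ then yields $b\in u$, i.e.\ $\Visible\Bendcode{A^a_v\to A^b_v}$ by \ref{ax:VCA}, uniformly in $A$.
\end{enumerate}
This is just your symmetry vision $\Anschauung{x}{x=a}$ generalized.

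The paper then strips the outer $\Visible$ to reach $\vdash a=b\to(A^a_v\to A^b_v)$. For the visibilized clause, the invisible \ref{ax:IA1} cannot be used inside $\Visible$, so use your Planer idea instead:
\begin{enumerate}
\item The derivation above is visible and gives $\Visible\Bendcode{a=b\to\Visible\Bendcode{A^a_v\to A^b_v}}$.
\item Chaining with $\Visible\Bendcode{\Visible\Bendcode{a=b}\to a=b}$ (your fallback) gives $\Visible\Bendcode{\Visible\Bendcode{a=b}\to\Visible\Bendcode{A^a_v\to A^b_v}}$.
\item Lemma~\ref{planer} then yields $\Visible\Bendcode{a=b\to(A^a_v\to A^b_v)}$.
\end{enumerate}
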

		
		\begin{proof} \mbox{}
			\begin{enumerate}
				\item Use the fact that $\vdash a=b\vee a\neq b$, Axiom \ref{inequalityaxiom} and Theorem \ref{identitypersistence} plus  a disjunctive syllogism.  
              
                \item Trivial
				\item Trivial, given Definition \ref{identitydefinition}
				\item  

Clearly,
\begin{equation}
        \vdash a=b\to (a\in \Anschauung{w}{w=a}\to b\in \Anschauung{w}{w=a}).
\end{equation}

But 
\begin{equation}
    \vdash a\in \Anschauung{w}{w=a},
\end{equation}

so that by visible comprehension,

\begin{equation}\label{alethiccomprehensionuse}
\vdash a=b\to  \Visible\Bendcode{b=a}. 
\end{equation}

On account of Axiom \ref{inequalityaxiom} combined with the unvisibilized Axiom \ref{ax:V5}, 

\begin{equation}\label{Tbisatobisa}
\vdash\Visible\Bendcode{b=a}\to   b=a
\end{equation}

so by Equations \eqref{alethiccomprehensionuse} and \eqref{Tbisatobisa} and transitivity of entailment

\begin{equation}\label{aisbtobisa}
\vdash a=b\to   b=a. 
\end{equation}

\item

By definition $\vdash a=b\to \forall u(a\in u\to b\in u).$ So clearly 
\[\vdash\forall u(a\in u\to b\in u)\to (a\in\Anschauung{u}{A^a_v\to A^u_v}\to b\in\Anschauung{u}{A^a_v\to A^u_v}),\] where $A^a_v$ is the result of substituting $a$ for $v$ in $A$, and
$A^u_v$ is the result of substituting $u$ for $v$ in $A$.
As clearly $\vdash a\in\Anschauung{u}{A^a_v\to A^u_v}$, it follows that $\vdash a=b\to b\in\Anschauung{u}{A^a_v\to A^u_v}.$ Given the Axiom of Visible Comprehension, Axiom \ref{ax:VCA}, it follows that $\vdash a=b\to \Visible\Bendcode{A^a_v\to A^b_v}.$ So on account of Axiom \ref{inequalityaxiom} combined with the unvisibilized Axiom \ref{ax:V5}, $\vdash a=b\to (A^a_v\to A^b_v).$
\end{enumerate}
\end{proof}

\newpage

\section{Arithmetic}\label{Arithmetic}

\subsection{On order and strength}
Given Theorem \ref{Arithmetictheorem}, the arithmetic with visions of \S\ref{Arithmetic} is second-order Peano arithmetic. According to \parencite{Simpson2013} it is categorical. 

The system $\mathbf{Z}_2$, named Second-Order Arithmetic, has the same strength as $\mathbf{ZF}$ minus the power set axiom. One should not confuse second-order Peano arithmetic, known as  $\mathbf{PA}_2$, with the much stronger system $\mathbf{Z}_2$.
We call Theorem \ref{secondorder} \emph{vision-induction}, and
Theorem \ref{schemainduction} \emph{schema-induction}. The combination of vision-induction and schema-induction places the resulting arithmetic in close proof-theoretic proximity to subsystems of second-order arithmetic such as $\mathbf{ACA}$, with proof-theoretic ordinal $\epsilon_{\epsilon_0}$. 
$\mathbf{ACA}$ lies between $\mathbf{ACA}_0$ and the much stronger system $\mathbf{ATR}_0$.

The proofs below are more complicated than usual, as it is necessary to obtain the visibilized versions of the arithmetical axioms. To distinguish from $\varnothing$ and $\mathbb{N}$, $V_\varnothing$ and $V_{\mathbb N}$ are used
for the empty vision and the vision of natural numbers which are visions, respectively.

\subsection{The formal system}

    \begin{dfn}\label{emptysuccessornaturals}
		\begin{enumerate}[itemindent =0.4cm, before=\leavevmode, label=$(\arabic*)$,ref=  \ref{emptysuccessornaturals}.$\arabic*$]
			\item \(V_{\varnothing}=\Anschauung{x}{x\neq x}.\)
			\item\label{successor} \(a'=\Anschauung{x}{x=a\vee x\in a}.\)
			\item\label{omegadefinition} \(V_{\mathbb N}=\Anschauung{x}{\forall y(V_{\varnothing}\in y\wedge\forall z(z\in y\to z'\in y)\to x\in y)}.\)
		\end{enumerate}
        \EndDef
	\end{dfn}
	
	\begin{thm}\label{Arithmetictheorem}
		\begin{enumerate}[itemindent =0.4cm, before=\leavevmode, label=$(\arabic*)$,ref=  \ref{Arithmetictheorem}.$\arabic*$]
			\item\label{0inomega} \(\vdash\Visible\Bendcode{V_{\varnothing}\in V_{\mathbb N}}.\)
			\item\label{xinomegatoxsharpinomega} \(\vdash\Visible\Bendcode{\forall x(x\in V_{\mathbb N}\to x'\in V_{\mathbb N}}.\)
            \item\label{0notsuccessor} \(\vdash\Visible\Bendcode{\lnot \exists x(V_{\varnothing}=x')}.\)
			\item\label{omegaisorthodox} \(\vdash\Visible\Bendcode{\forall x(\Visible \Bendcode{x\in V_{\mathbb N}}
        \vee\Visible\Bendcode{x\notin V_{\mathbb N}}})\), i.e. \(V_{\mathbb N}\) is orthodox.
			\item\label{secondorder} \(\vdash\Visible\Bendcode{\forall y(V_{\varnothing}\in y\wedge\forall z(z\in y\to z'\in y)\to \forall x(x\in V_{\mathbb N}\to x\in y))}. \)
			\item\label{schemainduction} \(\vdash\Visible\Bendcode{\mathrm{A}(V_{\varnothing})\wedge\forall x(\mathrm{A}(x)\to \mathrm{A}(x'))\to \forall y(y\in V_{\mathbb N}\to \mathrm{A}(y))}.\)
            \item $\vdash \Visible\Bendcode{\forall y\forall z\bigl(y\in V_{\mathbb N} \wedge \ z\in V_{\mathbb N}\to (y'=z'\to y=z)\bigr)}.$
		\end{enumerate}
	\end{thm}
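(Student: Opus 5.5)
The plan is to derive each of the seven clauses first in unvisibilized form from \ref{ax:VCA}, Definition~\ref{emptysuccessornaturals} and the identity results of Theorem~\ref{Identitytheorem}. We then lift each to its visibilized form using the principle that visibility is closed under visible reasoning. This principle comes from \ref{ax:V1} (visibility of classical logic) together with \ref{ax:V4}, whose unvisibilized form gives $\Visible\Bendcode{A\to B}\to(\Visible\Bendcode{A}\to\Visible\Bendcode{B})$. The central tool is the visible comprehension instance
\[
\vdash\Visible\Bendcode{\forall x\bigl(x\in V_{\mathbb N}\leftrightarrow \Visible\Bendcode{\forall y(V_{\varnothing}\in y\wedge\forall z(z\in y\to z'\in y)\to x\in y)}\bigr)},
\]
together with the analogous instances for $V_{\varnothing}$ and $a'$. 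Throughout, the identity clauses from Theorem~\ref{Identitytheorem} and Theorem~\ref{identitypersistence} let us trade $\Visible\Bendcode{x=a}$ for $x=a$ and $\Visible\Bendcode{x\neq x}$ for $x\neq x$.

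For (5) we unfold membership in $V_{\mathbb N}$ via comprehension. This gives $x\in V_{\mathbb N}\to\Visible\Bendcode{\forall y(\dots\to x\in y)}$, and \ref{ax:IA1} then yields $\forall y(\dots\to x\in y)$. Everything here is an axiom instance, so the derivation can be carried out inside $\Visible\Bendcode{\cdot}$ using the visible forms of the axioms.

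Clauses (1) and (2) run the other way: we must \emph{produce} $\Visible\Bendcode{\forall y(\dots\to V_{\varnothing}\in y)}$. For (1) this is visible by \ref{ax:V1}, since the formula is a classical tautology. For (2) we need, from $\Visible\Bendcode{\Phi(x)}$ (writing $\Phi$ for the defining condition), to obtain $\Visible\Bendcode{\Phi(x')}$. Classical logic gives $\Phi(x)\to\Phi(x')$ visibly, and \ref{ax:V4} transports this.

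Clause (3) follows from the observation that $x\in x'$ holds visibly by comprehension and reflexivity, whereas nothing is in $V_{\varnothing}$. Co-essentiality applied with $u=\Anschauung{w}{x\in w}$ then separates $V_{\varnothing}$ from $x'$.

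Clause (6) reduces to (5) by instantiating $y$ with the vision $\Anschauung{w}{\mathrm A(w)}$. This requires moving from $\mathrm A(w)$ to $\Visible\Bendcode{\mathrm A(w)}$, which only works for visible hypotheses. We therefore take the hypotheses of the schema visibly and use \ref{ax:IA2}, or we route through the vision $\Anschauung{w}{w\in V_{\mathbb N}\wedge \mathrm A(w)}$ and appeal to visible instances of the axioms.

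Clause (7) follows from Fungibility and the fact that $y'=z'$ gives $y\in z'$. Hence $y=z\vee y\in z$, and symmetrically $z=y\vee z\in y$. The remaining case $y\in z\wedge z\in y$ has to be excluded by an induction via (6) showing that natural numbers are transitive and not self-membered. Every step uses only visible axioms, so each conclusion is visible.

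The main obstacle is clause (4), orthodoxy of $V_{\mathbb N}$: we must show $\Visible\Bendcode{x\in V_{\mathbb N}}\vee\Visible\Bendcode{x\notin V_{\mathbb N}}$ visibly for arbitrary $x$, including non-numbers. My first attempt is to apply \ref{ax:V6}/\ref{ax:V7}. These axioms yield $\Visible\Bendcode{A}\vee\Visible\Bendcode{\lnot A}$ once one has $\Visible\Bendcode{\Visible\Bendcode{A}\to A}$ or $\Visible\Bendcode{\Visible\Bendcode{A}\to\Visible\Bendcode{\Visible\Bendcode{A}}}$ for $A\equiv x\in V_{\mathbb N}$. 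The first I would obtain from comprehension together with \ref{ax:IA1} applied to the visible fragment. Failing that, a fallback is a visible induction via (6) on the orthodoxy predicate for members, with Axiom~\ref{inequalityaxiom} and Theorem~\ref{Identitytheorem}(1) supplying orthodoxy of the identities occurring in $x'$. Non-members would be handled by the persistence argument of Theorem~\ref{idinpersistence}.
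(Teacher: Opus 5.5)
\paragraph{Gap: the Reflexive Axiom is used under $\Visible$.}
In clause (5) you pass from $\Visible\Bendcode{\Phi(x)}$ to $\Phi(x)$ by \ref{ax:IA1}. Here $\Phi(x)$ is, as in your notation, $\forall y(V_{\varnothing}\in y\wedge\forall z(z\in y\to z'\in y)\to x\in y)$. You then say that, since every step is an axiom instance, the derivation can be carried out inside $\Visible\Bendcode{\cdot}$. But \ref{ax:IA1} and \ref{ax:IA2} are exactly the axioms that are present without being visible. Theorem~\ref{thm:invisibilityIA1} shows that $\Visible\Bendcode{\Visible\Bendcode{\mathrm V}\to\mathrm V}$ is not derivable, so a derivation that uses \ref{ax:IA1} cannot be visibilized. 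Your argument gives only the unvisibilized $\vdash\forall x(x\in V_{\mathbb N}\to\Phi(x))$.

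Your trade for identities is legitimate: Axiom~\ref{inequalityaxiom}, Theorem~\ref{identitypersistence} and \ref{ax:V5} give visible reflection for $=$ and $\neq$. Nothing analogous exists for $\Phi(x)$ or for membership in an arbitrary vision.

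The same defect sinks your first attempt at (4). The premise $\Visible\Bendcode{\Visible\Bendcode{x\in V_{\mathbb N}}\to x\in V_{\mathbb N}}$ of \ref{ax:V6} is an internal reflection instance: by \ref{ax:VCA} it amounts to stripping one $\Visible$ from $\Visible\Bendcode{\Visible\Bendcode{\Phi(x)}}$. The defect then propagates. Your (6) reduces to (5), and your (7) both uses (6) and must pass from $y\in z'$, i.e.\ $\Visible\Bendcode{y=z\vee y\in z}$, to $y=z\vee y\in z$ under $\Visible$. The fallback for (4) does not repair this. An induction via (6) only concerns members of $V_{\mathbb N}$, so it cannot yield $\Visible\Bendcode{x\notin V_{\mathbb N}}$ for arbitrary $x$. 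Theorem~\ref{idinpersistence} is a semantic fact about identity statements, not a derivation about membership in $V_{\mathbb N}$.

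\paragraph{The missing idea: prove (4) first, using only the upward direction.}
The direction $a\in V_{\mathbb N}\to\Visible\Bendcode{a\in V_{\mathbb N}}$ is obtainable from visible material.
\begin{enumerate}
\item Since (1) and (2) are visible, instantiating $y$ with $V_{\mathbb N}$ itself gives visibly $\Phi(a)\to a\in V_{\mathbb N}$.
\item \ref{ax:V4} lifts this to $\Visible\Bendcode{\Visible\Bendcode{\Phi(a)}\to\Visible\Bendcode{a\in V_{\mathbb N}}}$.
\item \ref{ax:VCA} replaces $\Visible\Bendcode{\Phi(a)}$ by $a\in V_{\mathbb N}$, giving $\Visible\Bendcode{a\in V_{\mathbb N}\to\Visible\Bendcode{a\in V_{\mathbb N}}}$.
\item \ref{ax:V5} and contraposition turn this into $\Visible\Bendcode{\Visible\Bendcode{a\notin V_{\mathbb N}}\to a\notin V_{\mathbb N}}$.
\end{enumerate}
The last formula is the antecedent of \ref{ax:V6} for the \emph{negative} instance $A\equiv a\notin V_{\mathbb N}$. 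That is why the paper feeds \ref{ax:V6} the negation rather than $a\in V_{\mathbb N}$. Your \ref{ax:V7} option would also go through once this upward principle is in hand, via one more use of \ref{ax:V4}.

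\paragraph{How the paper then gets (5) and (6).}
The paper obtains (5) from \ref{ax:VCA} together with this orthodoxy, not from \ref{ax:IA1}. For (6) it does not assume visible hypotheses, which would prove a weaker schema than the one stated. Instead it uses $A^*(x)\coloneqq(A(V_{\varnothing})\wedge\forall x(A(x)\to A(x')))\to A(x)$, whose base and step are pure logic and hence visible. It instantiates (5) with $\Anschauung{x}{A^*(x)}$. It then removes the inner $\Visible$ using orthodoxy of $V_{\mathbb N}$, the converse Barcan principle and Lemma~\ref{planer}. Finally it restores the universal quantifier under $\Visible$ by applying \ref{ax:IA2} outside. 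Lemma~\ref{planer} is the device that does the work you wanted internal reflection to do.
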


    \begin{proof}[Proof:] \mbox{}
		
		\setcounter{equation}{0}
		\begin{enumerate}[itemindent =0.4cm]
			\item Combine visible comprehension and the fact that 
            \begin{equation}\label{varnothingtriviality}
            \vdash\Visible\Bendcode{\forall y(V_{\varnothing}\in y\wedge\forall z(z\in y\to z'\in y)\to V_{\varnothing}\in y)}    
            \end{equation}
            
			\item This follows from visible comprehension and the evident 

            \begin{equation}\label{omegaxtoomegaxpluss}
            \begin{split}
                \forall x\Bigl(&\Visible\Bendcode{\forall y(V_{\varnothing}\in y\wedge \forall z(z\in y\to z'\in y)\to x\in y)}\to\\
                &\Visible\Bendcode{\forall y(V_{\varnothing}\in y\wedge \forall z(z\in y\to z'\in y)\to x'\in y)}\Bigr)
                \end{split}
            \end{equation}

            \item Given Definition \ref{successor}, $\forall x(x\in x')$. So $\forall x(V_{\varnothing}=x'\to x\in V_{\varnothing})$. Consequently, if $V_{\varnothing}$ were identical to a vision $a'$ then $a$ would be a member of $V_{\varnothing}$, contrary to the definition of $V_{\varnothing}$.

			\item From logic: 
         \begin{equation}\label{logicprecursor}
         \begin{split}
        \vdash & V_\varnothing\in V_\mathbb{N}\wedge\forall x(x\in V_\mathbb{N}\to x'\in V_\mathbb{N})\to\\ & (\forall y(V_\varnothing\in y\wedge\forall x(x\in y\to x'\in y)\to a\in y)\to a\in V_\mathbb{N}).
        \end{split}
     \end{equation}
     Equation \eqref{logicprecursor} plus Theorems \ref{0inomega} and \ref{xinomegatoxsharpinomega} entail that 

     \begin{equation}
          \vdash\forall y(V_\varnothing\in y\wedge\forall x(x\in y\to x'\in y)\to a\in y)\to a\in V_\mathbb{N}
     \end{equation}

     The reasoning in these steps has only used principles that are visible in \textbf{VT}, so

\begin{equation}\label{visiblesteptoVN}
          \vdash\Visible\Bendcode{\forall y(V_\varnothing\in y\wedge\forall x(x\in y\to x'\in y)\to a\in y)\to a\in V_\mathbb{N}}
     \end{equation}
     
Apply next Axiom \ref{ax:V4} appropriately to Equation \ref{visiblesteptoVN} to get

\begin{equation}\label{visvisiblesteptoN}
\begin{split}
        \vdash\Visible\Bendcode{\Visible\Bendcode{\forall y(V_\varnothing\in y\wedge\forall x(x\in y\to x'\in y)\to a\in y)}\to \Visible\Bendcode{a\in V_\mathbb{N}}}
        \end{split}
\end{equation}

Apply next Axiom \textbf{VCA}:~p.~\pageref{ax:VCA} to the antecedent in Equation \ref{visvisiblesteptoN} to get

\begin{equation}\label{ainVntoVisainVn}
    \vdash\Visible\Bendcode{a\in V_\mathbb{N}\to \Visible\Bendcode{a\in V_\mathbb{N}}}
\end{equation}

Combine Equation \ref{ainVntoVisainVn} with Axiom \ref{ax:V5} to get

\begin{equation}
    \vdash\Visible\Bendcode{a\in V_\mathbb{N}\to \lnot \Visible\Bendcode{a\notin V_\mathbb{N}}}
\end{equation}

          Contrapose internally to get

    \begin{equation}
    \vdash\Visible\Bendcode{\Visible\Bendcode{a\notin V_\mathbb{N}}\to a\notin V_\mathbb{N}}
\end{equation}  

Next appeal to the instance 
\[
\vdash\Visible\Bendcode{\Visible\Bendcode{a\notin V_\mathbb{N}}\to a\notin V_\mathbb{N}}\to \Visible\Bendcode{a\in V_\mathbb{N}\vee a\notin V_\mathbb{N}}
\]

of Axiom \ref{ax:V6}, and visibilize to get

\begin{equation}
\vdash\Visible\Bendcode{a\in V_\mathbb{N}\vee a\notin V_\mathbb{N}}
\end{equation}

Finally by logic, as $a$ was arbitrary,

\begin{equation}
\vdash\Visible\Bendcode{\forall x(x\in V_\mathbb{N}\vee x\notin V_\mathbb{N})}
\end{equation}

So by Definition \ref{orthodoxydefinition}, $V_\mathbb{N}$ is orthodox. 

			\item This is logically equivalent to \[\vdash\Visible\Bendcode{\forall x(x\in V_{\mathbb N}\to\forall y(V_{\varnothing}\in y\wedge\forall z(z\in y\to z'\in y)\to  x\in y))}. \]

            By Axiom \textbf{VCA}:~p.~\pageref{ax:VCA}, 

            \[\vdash\Visible\Bendcode{\forall x(x\in V_{\mathbb N}\leftrightarrow\Visible\Bendcode{\forall y(V_{\varnothing}\in y\wedge\forall z(z\in y\to z'\in y)\to  x\in y))}}. \]

            But, as $V_{\mathbb{N}}$ is orthodox, this simplifies to 

            \[\vdash\Visible\Bendcode{\forall x(x\in V_{\mathbb N}\leftrightarrow\forall y(V_{\varnothing}\in y\wedge\forall z(z\in y\to z'\in y)\to  x\in y))}, \]

which immediately yields the desired conclusion:
            \[\vdash\Visible\Bendcode{\forall x(x\in V_{\mathbb N}\to\forall y(V_{\varnothing}\in y\wedge\forall z(z\in y\to z'\in y)\to  x\in y))}. \]
			
			\item We presuppose 
\begin{dfn}\label{defAstar}
$A^*(x)\coloneqq
(A(V_{\varnothing})\wedge
\forall x(A(x)\to A(x')))
\to A(x).$    
\end{dfn}

            By logic,
           \begin{equation}\label{Astarempty}
               A^*(V_{\varnothing}).
                       \end{equation}

                       By logic,
           \begin{equation}\label{Astarsuccessor}
               \forall x(A^*(x)\to A^*(x')).
                       \end{equation}

Since Equations \ref{Astarempty} and \ref{Astarsuccessor} only depend upon logic, $\forall x(A^*(x)\to A^*(x'))$ and $A^*(V_{\varnothing})$ are \emph{visibly} derived. Consequently,

\begin{equation}\label{VisAstarempty}
               \vdash\Visible\Bendcode{A^*(V_{\varnothing})}.
                       \end{equation}

                       and
\begin{equation}\label{VisAstarsuccessor}
               \vdash\Visible\Bendcode{\forall x(A^*(x)\to A^*(x'))}.
                       \end{equation}

                       By the fact related in Footnote \ref{barcanfootnote}:~p.~\pageref{barcanfootnote}, Equation \ref{VisAstarsuccessor} entails

\begin{equation}\label{VisAstarsuccessorQuantifiershifted}
               \vdash\forall x\Visible\Bendcode{A^*(x)\to A^*(x')}.
                       \end{equation}

                       An internal use of the Reflexive Axiom \ref{ax:IA1}:~p.~\pageref{ax:IA1}, Axiom \ref{ax:V4}:~p.~\pageref{ax:V4} and Equation \ref{VisAstarsuccessorQuantifiershifted}:~p.~\pageref{VisAstarsuccessorQuantifiershifted} entail

\begin{equation}\label{VisAstarsuccessorQuantifiershiftedVisin}
               \vdash\forall x(\Visible\Bendcode{A^*(x)}\to \Visible\Bendcode{A^*(x')}).
                       \end{equation}

                       Visible comprehension \textbf{VCA}:~p.~\pageref{ax:VCA} and Equation \eqref{VisAstarsuccessorQuantifiershiftedVisin}:~p.~\pageref{VisAstarsuccessorQuantifiershiftedVisin} entail

                      \begin{equation}\label{VisAstarsuccessorQuantifiershiftedVISIONARY}
               \vdash\forall x(x\in\Anschauung{x}{A^*(x)}\to x'\in\Anschauung{x}{A^*(x)}).
               \end{equation}

               Equation \ref{VisAstarempty}:~p.~\pageref{VisAstarempty} and visible comprehension \textbf{VCA}:~p.~\pageref{ax:VCA} entail

               \begin{equation}\label{Vsubvisemptyin}
                   \vdash V_{\varnothing}\in\Anschauung{x}{A^*(x)}.
               \end{equation}

Combining Equations \eqref{VisAstarsuccessorQuantifiershiftedVISIONARY} and \eqref{Vsubvisemptyin} yields

\begin{equation}\label{antecedentforschemainduction}
\begin{split}
    \vdash V_{\varnothing}\in\Anschauung{x}{A^*(x)}\wedge \forall x(x\in\Anschauung{x}{A^*(x)}\to x'\in\Anschauung{x}{A^*(x)}).
    \end{split}
\end{equation}

It is routine to verify that Equation \eqref{antecedentforschemainduction}:~p.~\pageref{antecedentforschemainduction} was visibly derived. This justifies 

\begin{equation}\label{VISantecedentforschemainduction}
\begin{split}
    \vdash \Visible\Bendcode{V_{\varnothing}\in\Anschauung{x}{A^*(x)}\wedge \forall x(x\in\Anschauung{x}{A^*(x)}\to x'\in\Anschauung{x}{A^*(x)})}.
    \end{split}
\end{equation}

               Axioms \ref{ax:IA1}:~p.~\pageref{ax:IA1} and \ref{ax:V4}:~p.~\pageref{ax:V4} combined with Equation \eqref{VISantecedentforschemainduction} and Theorem \ref{secondorder}:~p.~\pageref{secondorder} entail 

               \begin{equation}\label{universalpenultimatestep}
    \vdash\Visible\Bendcode{\forall y(V_{\varnothing}\in y\wedge\forall z(z\in y\to z'\in y)\to \forall x(x\in V_{\mathbb N}\to x\in y))}.
\end{equation}

By internal universal instantiation in Equation \ref{universalpenultimatestep}:~p.~\pageref{universalpenultimatestep} we get

\begin{equation}\label{instantiatedpenultimatestep}
  \vdash\Visible\Bendcode{\forall x(x\in V_{\mathbb N}\to x\in\Anschauung{x}{A^*(x)})}.  
\end{equation}

As $V_{\mathbb N}$ is orthodox by Theorem \ref{omegaisorthodox}:~p.~\pageref{omegaisorthodox}, 

\begin{equation}
     \vdash\Visible\Bendcode{\forall x(\Visible\Bendcode{x\in V_{\mathbb N}}\to x\in V_{\mathbb N})},
\end{equation}

so it follows that

\begin{equation}\label{VsubNstep}
  \vdash\Visible\Bendcode{\forall x(\Visible\Bendcode{x\in V_{\mathbb N}}\to x\in\Anschauung{x}{A^*(x)})}.  
\end{equation}

Equation \ref{VsubNstep} and visible comprehension \textbf{VCA}:~p.~\pageref{ax:VCA} entail

\begin{equation}\label{VissubNstep}
  \vdash\Visible\Bendcode{\forall x(\Visible\Bendcode{x\in V_{\mathbb N}}\to \Visible\Bendcode{A^*(x)})}.  
\end{equation}

From Equation \ref{VissubNstep}:~p.~\pageref{VissubNstep} and Footnote \ref{barcanfootnote}:~p.~\pageref{barcanfootnote} we obtain

\begin{equation}\label{quantifiershiftedVissubNstep}
  \vdash\forall x\Visible\Bendcode{(\Visible\Bendcode{x\in V_{\mathbb N}}\to \Visible\Bendcode{A^*(x)})}.  
\end{equation}

Equation \ref{quantifiershiftedVissubNstep}:~p.~\pageref{quantifiershiftedVissubNstep}  and Planer's schema \ref{planer}:~p.~\pageref{planer} entail

\begin{equation}\label{flatquantifiershiftedVissubNstep}
  \vdash\forall x\Visible\Bendcode{(x\in V_{\mathbb N}\to A^*(x))}.  
\end{equation}

An appeal to Axiom \ref{ax:IA2} justifies

\begin{equation}\label{flatquantifierPrefixedVissubNstep}
  \vdash\Visible\Bendcode{\forall x(x\in V_{\mathbb N}\to A^*(x))}.  
\end{equation}

An appeal to Definition \ref{defAstar}:~p~\pageref{defAstar} justifies

\begin{equation}\label{VsubNandAstarspealledout}
    \vdash\Visible\Bendcode{\forall x(x\in V_{\mathbb N}\to((A(V_{\varnothing})\wedge
\forall x(A(x)\to A(x')))
\to A(x)))}, 
\end{equation}

and Equation \ref{VsubNandAstarspealledout} is equivalent to

  \begin{equation}\label{rearrangedVsubNandAstarspealledout}
    \vdash\Visible\Bendcode{(A(V_{\varnothing})\wedge
\forall x(A(x)\to A(x'))\to
\forall x(x\in V_{\mathbb N}\to A(x)))}, 
\end{equation}

\item The base case  is $\vdash\bigwedge_{k=0}^{k=0} \bigwedge_{l=0}^{l=0}(k\neq l\to k'\neq l')$, which  clearly holds.

It is  as well obvious that the following induction step holds:

\begin{equation}
  \vdash\forall x\Biggl(\bigwedge_{k=0}^{k=x} \bigwedge_{l=0}^{l=x}(k\neq l\to k'\neq l')\to\bigwedge_{k=0}^{k=x'} \bigwedge_{l=0}^{l=x'}(k\neq l\to k'\neq l')\Biggr)  
\end{equation}

\bigskip

So on account of  Theorem \ref{schemainduction} it follows that   

\begin{equation}\label{stringinductionconsequent}
    \vdash\forall x(x\in V_{\mathbb N}\to \bigwedge_{k=0}^{k=x} \bigwedge_{l=0}^{l=x}(k\neq l\to k'\neq l')).
\end{equation}

Consider the instances 

\begin{equation}
    \vdash y\in V_{\mathbb N}\to \bigwedge_{k=0}^{k=y} \bigwedge_{l=0}^{l=y}(k\neq l\to k'\neq l')
\end{equation}

and 
\begin{equation}
    \vdash z\in V_{\mathbb N}\to \bigwedge_{k=0}^{k=z} \bigwedge_{l=0}^{l=z}(k\neq l\to k'\neq l'),
\end{equation}

while presupposing that 
\begin{equation}\label{ylargerthanz}
    y>z.
\end{equation}

\bigskip

Clearly: 

\begin{equation}
 \bigwedge_{k=0}^{k=y} \bigwedge_{l=0}^{l=y}(k\neq l\to k'\neq l') \to \bigwedge_{l=0}^{l=y}(y\neq l\to y'\neq l').
\end{equation}

\bigskip

Given Equation \eqref{ylargerthanz},

\begin{equation}    
\bigwedge_{l=0}^{l=y}(y\neq l\to y'\neq l'\to \bigwedge_{l=0}^{l=z}(y\neq l\to y'\neq l'.
\end{equation}

\smallskip

But

\smallskip

\begin{equation}
    \bigwedge_{l=0}^{l=z}(y\neq l\to y'\neq l')\to (y\neq z\to y'\neq z'),
\end{equation}

\smallskip

so that  

\smallskip

\begin{equation}\label{penultimateinductionconsequent}
\begin{split}
        \forall x\Biggl(x\in V_{\mathbb N}\to &\bigwedge_{k=0}^{k=x} \bigwedge_{l=0}^{l=x}(k\neq l\to k'\neq l')\Biggr)\to\\ 
        &\forall y\forall z\Biggl(y\in V_{\mathbb N} \wedge \ z\in V_{\mathbb N}\to (y\neq z\to y'\neq z')\Biggr).
\end{split}
 \end{equation}

Modus ponens with Equations \eqref{stringinductionconsequent} and \eqref{penultimateinductionconsequent}, and contraposing  the result's consequent entail

\begin{equation}
    \vdash\forall y\forall z\bigl(y\in V_{\mathbb N} \wedge \ z\in V_{\mathbb N}\to (y'=z'\to y=z)\bigr).
\end{equation}

Observe finally that 

\begin{equation}
    \vdash\Visible\Bendcode{\forall y\forall z\bigl(y\in V_{\mathbb N} \wedge \ z\in V_{\mathbb N}\to (y'=z'\to y=z)\bigr)}.
\end{equation}

because $V_{\mathbb N}$ and identity are orthodox.
             \end{enumerate}
	\end{proof}

\newpage
\section{Visible Barcan failure:~the invisibility of \textbf{IA1}}\label{secBarcanfailure}\leavevmode\par\vspace{2mm}

	\noindent The negative result of this section is that the Barcan formula for $\Visible$ cannot be adopted visibly, as in $\vdash\Visible\Bendcode{\forall x\Visible\Bendcode{A(x)}\to \Visible\Bendcode{\forall x A(x)}}$. We show that this result follows from a negative result for a truth theoretic context, which was obtained in \parencite{McGee1985}\index{McGee, Vann Roger}, and is known as \emph{McGee's paradox}. Below we adapt from McGee's argument to our theory of visions.  Compare the account in \parencite[380-382]{Cantini1996}\index{Cantini, Andrea} and the observation in \parencite[357]{Bjordal2012}.
	
	We first introduce some vision-theoretic versions 
    of set theoretic notions. That is necessary, for we do not have sets yet.
    
\begin{dfn}[of Russell's vision]
\begin{equation*}
    \mathfrak{r}=\Anschauung{x}{x\notin x}
\end{equation*}
\end{dfn}

\begin{dfn}\label{Barcanfailure}
		
		\begin{enumerate}[itemindent =0.3cm, before=\leavevmode, label=(\arabic*),ref=   \ref{Barcanfailure}.\arabic*]

        \item\label{visionsofemptyandnatural} As in Definition \ref{successor}:~p.~\pageref{successor}, $V_{\varnothing}$ is the empty vision $\Anschauung{x}{x\neq x}$, and $V_{\mathbb N}$ is the vision of the natural numbers, construed as visions. But here $\omega$ is used in connection with $\omega$-ordered visions of visions which are not vision-numbers.
            \item\label{BFfsuccessor} As in Definition \ref{successor}:~p.~\pageref{successor}, \(a'=\Anschauung{x}{x\in a\vee x=a}\).
			\item  \(\Proposisjon{a, b}=\Anschauung{x}{x=a\vee x=b}\).
			\item \(\Proposisjon{a}=\Proposisjon{a, a}\).
            \item\label{aomega}
\begin{multline*}
a_\omega=
\Anschauung{u}{\forall x(
\langle\,V_{\varnothing},a\rangle\in x
\\
\wedge
\forall y,z(
\langle y,z\rangle\in x
\to
\langle y',\Anschauung{v}{v\in z}\rangle
)
\to u\in x)}.
\end{multline*}            
			
			\item\label{onlyatlimits} \(\mathrm{t}=\Anschauung{x}{x=\mathfrak{r}\wedge x\notin x\wedge\lnot\Visible\Bendcode{x\in x}}\).
			
			\item Use \(\overline{0}\), \(\overline{1}\), \(\overline{2}\), \ldots for the members  of \(\omega\).
			\item  Let \(\mathrm{t}_{\overline{0}}= \mathrm{t}\) and
			\(\mathrm{t}_{\overline{n+1}}= \Anschauung{v}{v\in \mathrm{t}_{\overline{n}}}. \) 
			\item\label{Carcanfailurestatement} \(\mathrm{B}(\mathrm{t}_{\overline{i}})= \exists w(\langle w,\mathrm{t}_{\overline{i}}\rangle\in \mathrm{t}_{\omega})\to \mathfrak{r}\notin \mathrm{t}_{\overline{i}}.\)
			
			\item\label{CCarcanfailurestatement} \(\mathrm{B}(x)= \exists w(\langle w,x\rangle\in \mathrm{t}_{\omega})\to \mathfrak{r}\notin x.\)
			
		\end{enumerate}
		\end{dfn}
		\medskip

	\begin{lem}\label{OrrthodoxyoftsubN}
 \[\textnormal{For any} \ a, a_\omega \ \textnormal{is orthodox.}\]
	\end{lem}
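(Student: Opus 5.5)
The plan is to reproduce, for $a_\omega$, the proof of Theorem~\ref{omegaisorthodox} that $V_{\mathbb N}$ is orthodox. That proof used only two ingredients. The first is that $V_{\mathbb N}$ is a vision whose defining condition is an ``intersection over all closed $y$'' condition. The second is that $V_{\mathbb N}$ itself satisfies the closure clauses (Theorems~\ref{0inomega} and \ref{xinomegatoxsharpinomega}). The vision $a_\omega$ of Definition~\ref{aomega} has exactly this shape. Write $\Phi(u)$ for its defining condition
\[
\forall x\bigl(\langle V_\varnothing,a\rangle\in x\wedge\forall y,z(\langle y,z\rangle\in x\to\langle y',\Anschauung{v}{v\in z}\rangle\in x)\to u\in x\bigr).
\]
(I read the closure clause with the evidently intended ``$\in x$'' supplied.) By Definition~\ref{orthodoxydefinition}, the goal is $\vdash\Visible\Bendcode{\forall u(u\in a_\omega\vee u\notin a_\omega)}$.

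\textbf{Step 1: closure facts.} I first establish
\[
\vdash\Visible\Bendcode{\langle V_\varnothing,a\rangle\in a_\omega}
\quad\text{and}\quad
\vdash\Visible\Bendcode{\forall y,z(\langle y,z\rangle\in a_\omega\to\langle y',\Anschauung{v}{v\in z}\rangle\in a_\omega)}.
\]
This parallels Theorems~\ref{0inomega} and \ref{xinomegatoxsharpinomega}. By \textbf{VCA}, membership in $a_\omega$ is $\Visible\Bendcode{\Phi(\cdot)}$. Now $\Phi(\langle V_\varnothing,a\rangle)$ is a logical truth, so \ref{ax:V1} makes it visible. Likewise $\Phi(\langle y,z\rangle)\to\Phi(\langle y',\Anschauung{v}{v\in z}\rangle)$ is logically valid; I visibilize it and push the visibility inside with \ref{ax:V4}, \ref{ax:V3} and the footnoted converse Barcan principle. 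This gives $\Visible\Bendcode{\Phi(\langle y,z\rangle)}\to\Visible\Bendcode{\Phi(\langle y',\ldots\rangle)}$ visibly. Finally I rewrite both sides via \textbf{VCA}.

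\textbf{Step 2: the key implication.} From Step~1, pure logic gives $\vdash\Phi(b)\to b\in a_\omega$: instantiate the universal quantifier in $\Phi(b)$ with $x:=a_\omega$. This used only visible principles, so
\[
\vdash\Visible\Bendcode{\Phi(b)\to b\in a_\omega}.
\]
Applying \ref{ax:V4} internally and then \textbf{VCA} on the antecedent gives $\vdash\Visible\Bendcode{b\in a_\omega\to\Visible\Bendcode{b\in a_\omega}}$. This is the analogue of equation \eqref{ainVntoVisainVn}.

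\textbf{Step 3: finishing.} I follow the end of the proof of Theorem~\ref{omegaisorthodox} verbatim. Combining the last formula with \ref{ax:V5} gives $\Visible\Bendcode{b\in a_\omega\to\lnot\Visible\Bendcode{b\notin a_\omega}}$. Contraposing internally gives $\Visible\Bendcode{\Visible\Bendcode{b\notin a_\omega}\to b\notin a_\omega}$. The instance of \ref{ax:V6} then yields $\Visible\Bendcode{b\in a_\omega}\vee\Visible\Bendcode{b\notin a_\omega}$, and visibilizing gives the visible version. Generalizing over the arbitrary $b$, with \ref{ax:IA2} or the visible-generalization step used in Theorem~\ref{omegaisorthodox}, gives orthodoxy.

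\textbf{Main obstacle.} The hard part is Step~1's successor clause and the visibilization in Step~2. The closure involves the vision $\Anschauung{v}{v\in z}$ and ordered pairs built from identity. I therefore need the fungibility and orthodoxy of identity (Theorem~\ref{Identitytheorem}) to ensure that the pair terms behave extensionally enough for the instantiation in Step~2 to go through inside $\Visible$. If this proves delicate, my first attempt will be to observe that only instantiation of $\forall x$ at the single term $a_\omega$ is used. In that case no pair-extensionality is needed beyond what \ref{ax:V1} already supplies.
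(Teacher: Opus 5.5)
Your proposal is correct and follows the paper's own route, whose proof of this lemma is simply ``adapt the proof of Theorem~\ref{omegaisorthodox}'': your Step~1 supplies the analogues of Theorems~\ref{0inomega} and \ref{xinomegatoxsharpinomega}, Steps~2--3 replay the orthodoxy argument, and no pair-extensionality is needed because only instantiation at $a_\omega$ is used. Two small remarks: since \textbf{VT} lacks general necessitation, your ``visibilizing'' moves (and the internal uses of \ref{ax:V4}) are licensed by \ref{thm:IT2}, which turns $\vdash\Visible\Bendcode{A}$ into $\vdash\Visible\Bendcode{\Visible\Bendcode{A}}$; and under the literal Definition~\ref{orthodoxydefinition} your stated goal is already an instance of \ref{ax:V1}, so the real content is the stronger $\Visible\Bendcode{\forall u(\Visible\Bendcode{u\in a_\omega}\vee\Visible\Bendcode{u\notin a_\omega})}$ that Step~3 actually delivers.
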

	\begin{proof}
		Adapt the proof of Theorem \ref{omegaisorthodox}.\end{proof}
	
	\begin{lem}{~}\label{russelatlimit}
\[\vDash^\lambda \mathfrak{r}=\mathfrak{r}\wedge \mathfrak{r}\notin\mathfrak{r}\wedge\lnot\Visible\Bendcode{\mathfrak{r}\in\mathfrak{r}} \ \textnormal{just if} \ \lambda \ \textnormal{is a limit.}\] 
	\end{lem}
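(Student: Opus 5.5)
The plan is to work entirely in the revision semantics of \S\ref{revisionsemantics}. The membership clauses will come from \ref{ax:VCA} through Postulate~\ref{semanticcorrelation}.

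\textbf{Step 1: reduce membership in $\mathfrak r$ to visibility.} Since \ref{ax:VCA} is a visible axiom, Postulate~\ref{semanticcorrelation} gives, for every stage $\alpha$,
\[
\vDash^{\alpha} \mathfrak r\in\mathfrak r\leftrightarrow\Visible\Bendcode{\mathfrak r\notin\mathfrak r}.
\]
The conjunct $\mathfrak r=\mathfrak r$ holds at every stage. This follows from Reflexivity in Theorem~\ref{Identitytheorem}, which is visible and hence stage-invariant by Postulate~\ref{semanticcorrelation}; alternatively, it follows from Theorem~\ref{idinpersistence}. So only the conjunction $\mathfrak r\notin\mathfrak r\wedge\lnot\Visible\Bendcode{\mathfrak r\in\mathfrak r}$ needs to be studied stage by stage.

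\textbf{Step 2: successor stages.} By the clause of \S\ref{revisionsemantics}, for $\beta=\gamma+1$ we have
\[
\vDash^{\gamma+1}\Visible\Bendcode{A}\iff\ \vDash^{\gamma}A,
\]
since the only admissible witness interval is $\{\gamma\}$. Combining this with Step~1 gives
\[
\vDash^{\gamma+1}\mathfrak r\in\mathfrak r\iff\ \vDash^{\gamma}\mathfrak r\notin\mathfrak r.
\]
Hence membership of $\mathfrak r$ in itself flips at every successor step. Now suppose $\vDash^{\gamma+1}\mathfrak r\notin\mathfrak r$. Then $\vDash^{\gamma}\mathfrak r\in\mathfrak r$, so $\vDash^{\gamma+1}\Visible\Bendcode{\mathfrak r\in\mathfrak r}$, and the third conjunct fails. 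So the conjunction fails at every successor stage.

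\textbf{Step 3: limit stages.} Let $\lambda$ be a limit. By Step~2, for any $\gamma<\lambda$ both $\mathfrak r\in\mathfrak r$ and $\mathfrak r\notin\mathfrak r$ occur in the interval $[\gamma,\lambda)$, at $\gamma$ and at $\gamma+1$. Hence neither sentence is stably true below $\lambda$. By the visibility clause, both $\lnot\Visible\Bendcode{\mathfrak r\notin\mathfrak r}$ and $\lnot\Visible\Bendcode{\mathfrak r\in\mathfrak r}$ hold at $\lambda$. The first, with Step~1, gives $\vDash^{\lambda}\mathfrak r\notin\mathfrak r$; the second is the third conjunct. So the conjunction holds at every limit.

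\textbf{Main obstacle: stage $0$.} Stage $0$ is a freely chosen maximal consistent set, constrained only by the Remark after Postulate~\ref{semanticcorrelation}. I would settle it by reading $0$ as the degenerate limit case, as is customary in revision-theoretic ordinal bookkeeping. The lemma is then used only at genuine limits $\lambda\succ0$, which is where Steps~2--3 apply. Alternatively, one can observe that the visible-axiom constraints at stage $0$ still yield the equivalence of Step~1, and that the choice of $\vDash^{0}$ is irrelevant for all later stages. Apart from this boundary point, the argument is the routine flip-flop analysis of the Russell vision under the revision rule.
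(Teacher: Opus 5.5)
Your proposal is correct and follows essentially the paper's route: the paper's proof consists of the observation that at every successor stage $\chi+1$ one has $\lnot\Visible\Bendcode{\mathfrak r\in\mathfrak r}\leftrightarrow\mathfrak r\in\mathfrak r$, which is your Step 2, and it then simply asserts the limit case that your Step 3 actually proves via the flip-flop. You are also right to single out stage $0$, which the paper passes over: the value there depends on the chosen $\vDash^{0}$, so the lemma should be read for limits $\lambda\succ 0$.
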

	\begin{proof}
		For any  successor ordinal \(\chi+1\), \(\vDash^{\chi+1} \lnot\Visible\Bendcode{\mathfrak{r}\in\mathfrak{r}}\leftrightarrow \mathfrak{r}\in\mathfrak{r}.\)  So the Lemma holds as at precisely   limit ordinals \(\lambda\), \(\vDash^\lambda \mathfrak{r}\notin\mathfrak{r}\wedge \lnot \Visible\Bendcode{\mathfrak{r}\in\mathfrak{r}}. \)
	\end{proof}

	\begin{lem}[~]\label{failureofthebarcanformula}  
    \mbox{}
    
    For any limit ordinal \(\alpha\) preceding closure ordinal \(\varpi\), and \(\beta=\alpha+\omega:\)

	\begin{enumerate}
		\item 	\( \vDash^\beta \forall x\Visible\Bendcode{\mathrm{B}(x)}\)	
		\item \( \vDash^\beta\lnot\Visible\Bendcode{\forall x\mathrm{B}(x)}.\)
	\end{enumerate}
\end{lem}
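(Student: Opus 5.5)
The plan is to adapt McGee's argument, as reconstructed in \parencite[380--382]{Cantini1996}, to the revision semantics of \S\ref{revisionsemantics}. We track which $\mathrm{t}_{\overline{n}}$ contain $\mathfrak{r}$ at the stages $\alpha+k$ for $k<\omega$, using the fact that each $\mathrm{t}_{\overline{n+1}}=\Anschauung{v}{v\in \mathrm{t}_{\overline{n}}}$ adds one layer of visibility. By Axiom \ref{ax:VCA} and Postulate \ref{semanticcorrelation}, $\vDash^{\gamma} x\in\Anschauung{y}{A(y)}\leftrightarrow\Visible\Bendcode{A(x)}$ holds at every stage $\gamma$. Hence, at successor stages, $\vDash^{\gamma+1}\mathfrak{r}\in\mathrm{t}_{\overline{n+1}}$ holds just if $\vDash^{\gamma}\mathfrak{r}\in\mathrm{t}_{\overline{n}}$. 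By Lemma \ref{russelatlimit} and Theorem \ref{idinpersistence} (for the conjunct $x=\mathfrak{r}$), $\mathfrak{r}\in\mathrm{t}$ holds at limit stages and fails at successor stages. By induction on $k$, I will show that at stage $\alpha+k$ exactly $\mathrm{t}_{\overline{k}}$ contains $\mathfrak{r}$ among the $\mathrm{t}_{\overline{n}}$ with $n\le k$. For $n>k$ the status is inherited from the limit stage $\alpha$ and its predecessors, and I expect this to be handled in the same way as the $n\le k$ case.

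The second ingredient uses Lemma \ref{OrrthodoxyoftsubN}, applied to $\mathrm{t}_\omega$. Together with Theorem \ref{Identitytheorem}, it lets us treat $\exists w(\langle w,x\rangle\in\mathrm{t}_\omega)$ as stable across all stages. I will show that it holds exactly when $x$ is identical to some $\mathrm{t}_{\overline{n}}$. Orthodoxy gives a stage-invariant antecedent, so whether $\mathrm{B}(x)$ holds depends only on whether $\mathfrak{r}\notin x$.

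For item (1), fix $x$. If $x$ is not some $\mathrm{t}_{\overline{n}}$, then $\mathrm{B}(x)$ holds vacuously at every stage, so $\Visible\Bendcode{\mathrm{B}(x)}$ holds at $\beta$. If $x=\mathrm{t}_{\overline{n}}$, the bookkeeping above shows that $\mathfrak{r}\in\mathrm{t}_{\overline{n}}$ holds at stage $\alpha+n$ and fails at all stages $\alpha+k$ with $k>n$. So $\mathrm{B}(\mathrm{t}_{\overline{n}})$ holds from $\alpha+n+1$ up to $\beta$, and by the clause for $\Visible$ in \S\ref{revisionsemantics} we get $\vDash^\beta\Visible\Bendcode{\mathrm{B}(x)}$. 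The quantifier clause then yields (1).

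For item (2), I need that no $\gamma<\beta$ has $\forall x\mathrm{B}(x)$ holding on all of $[\gamma,\beta)$. Any such $\gamma$ lies below some $\alpha+k$, and at stage $\alpha+k$ we have $\mathfrak{r}\in\mathrm{t}_{\overline{k}}$ with the antecedent true, so $\mathrm{B}(\mathrm{t}_{\overline{k}})$ fails there. Hence $\lnot\Visible\Bendcode{\forall x\mathrm{B}(x)}$ at $\beta$.

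The main obstacle is the careful induction on successor stages. It must show precisely that at stage $\alpha+k$ the "witness" $\mathfrak{r}\in\mathrm{t}_{\overline{n}}$ holds only for $n=k$, in particular for $n>k$ as well. This requires the exact revision clause at successors and the use of $\alpha$ being a limit for the base. I would first verify the cases $k=0,1$ explicitly and then generalize.
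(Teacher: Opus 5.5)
Your architecture is the paper's. In (1) you split on whether $x$ lies in the domain of $\mathrm{t}_\omega$, with orthodoxy making the antecedent of $\mathrm{B}(x)$ stage-invariant. In (2) you use a diagonal witness $\mathrm{t}_{\overline{n}}$ to refute any tail of $\forall x\mathrm{B}(x)$ below $\beta$.

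But the base of your bookkeeping is false. Lemma~\ref{russelatlimit} concerns the \emph{defining condition} $C(\mathfrak{r})\coloneqq \mathfrak{r}=\mathfrak{r}\wedge\mathfrak{r}\notin\mathfrak{r}\wedge\lnot\Visible\Bendcode{\mathfrak{r}\in\mathfrak{r}}$ of $\mathrm{t}$, not membership in $\mathrm{t}$. By the same instance of Axiom~\ref{ax:VCA} you use for the higher $\mathrm{t}_{\overline{n}}$, $\mathfrak{r}\in\mathrm{t}$ holds at a stage iff $\Visible\Bendcode{C(\mathfrak{r})}$ does.
\begin{itemize}
\item At the limit $\alpha$ this would need $C(\mathfrak{r})$ on a tail below $\alpha$. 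That is impossible, since $C(\mathfrak{r})$ fails at every successor, so $\mathfrak{r}\notin\mathrm{t}$ at $\alpha$.
\item At $\alpha+1$ we get $\mathfrak{r}\in\mathrm{t}$, because $C(\mathfrak{r})$ holds at $\alpha$.
\item At $\alpha+m+1$ for $m\ge 1$ we get $\mathfrak{r}\notin\mathrm{t}$, because $C(\mathfrak{r})$ fails at the successor $\alpha+m$.
\end{itemize}
So $\mathfrak{r}\in\mathrm{t}$ holds exactly at successors of limits, not ``at limit stages'', and your table is off by one. For $k\ge n$, $\mathfrak{r}\in\mathrm{t}_{\overline{n}}$ holds at $\alpha+k$ iff $k=n+1$; the successor clause reduces this to $\mathfrak{r}\in\mathrm{t}_{\overline{0}}$ at $\alpha+k-n$. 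This is exactly the paper's indexing. Its witness stage for $\mathrm{t}_{\overline{n}}$ is $\delta=\alpha+(n+1)$, and its final step reduces $\mathfrak{r}\in\mathrm{t}_{\overline{0}}$ at $\alpha+1$ to $C(\mathfrak{r})$ at $\alpha$. Your planned check of $k=0$ would have exposed the error, since you assert $\mathfrak{r}\in\mathrm{t}_{\overline{0}}$ at the limit $\alpha$.

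Once corrected, the rest survives with shifted indices.
\begin{itemize}
\item In (1), $\mathrm{B}(\mathrm{t}_{\overline{n}})$ fails at $\alpha+n+1$ but holds on $[\alpha+n+2,\beta)$, which is still a tail below $\beta$.
\item In (2), $\mathrm{B}(\mathrm{t}_{\overline{k}})$ fails at $\alpha+k+1$, and every $\gamma<\beta$ lies below some such stage.
\end{itemize}
The case $n>k$, which you single out as the main obstacle, is not needed. Both the tail in (1) and the witnesses in (2) involve only stages $\alpha+k$ with $k\ge n$. There the successor clause alone settles everything from the corrected base at $\alpha$ and $\alpha+1$. Your unproved characterization of the domain of $\mathrm{t}_\omega$ as the $\mathrm{t}_{\overline{n}}$ is likewise taken for granted in the paper, via its index $i$.
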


\begin{proof} 

\mbox{}

	\begin{enumerate}
		\item	If \(\vDash^\beta\lnot\exists w(\langle w,x\rangle\in \mathrm{t}_{\omega})\), it follows that  \(\vDash^\beta\Visible\Bendcode{\mathrm{B}}(x)\)  on account of 
		Lemma \ref{OrrthodoxyoftsubN}.   
		If, on the other hand,  \(\vDash^\beta\exists w(\langle w, x\rangle\in \mathrm{t}_{\omega})\) we have  that  \(\vDash^\beta \Visible\Bendcode{\mathrm{B}}(x)\), as there is a \(\gamma \succeq\alpha+i\) such that
		\[\forall \delta(\alpha  \prec\gamma\preceq \delta \prec \beta\Rightarrow\vDash^\delta \mathrm{B}(x)).\] So for any  \(x\),  \(\vDash^\beta \Visible\Bendcode{\mathrm{B}(x)}\), and consequently \(\vDash^\beta \forall x\Visible\Bendcode{\mathrm{B}(x)}\). 
		
		\item Otherwise, \( \vDash^\beta\Visible\Bendcode{\forall x\mathrm{B}(x)}\), and we would have \(\vDash^\delta \forall x \mathrm{B}(x)\) as from some  ordinal \(\delta\) below \(\beta \) and above \(\alpha\).  Let \(\delta=\alpha+(n+1)\), for  finite ordinal \(n\succeq 0\), be such an ordinal.  By instantiation, \(\vDash^\delta \mathrm{B(\mathrm{t}_{\overline{n}})}\),  this  entails that  \(\vDash^{\alpha+(n+1)} \mathrm{B}(t_{\overline{n}})\). As \(\vDash^{\alpha+(n+1)}\exists w(\langle w,\mathrm{t}_{\overline{n}}\rangle\in \mathrm{t}_{\omega})\), it follows that  \(\vDash^{\alpha +(n+1)}  \mathfrak{r}\notin \mathrm{t}_{\overline{n}}\). As a consequence, \(\vDash^{\alpha+1}  \mathfrak{r}\notin \mathrm{t}_{\overline{0}}\). But the latter entails  \(\vDash^\alpha (\mathfrak{r}\neq \mathfrak{r}\vee \mathfrak{r}\in\mathfrak{r}\vee \mathcal{T}\mathfrak{r}\in\mathfrak{r})\) which contradicts Lemma \ref{russelatlimit}, as \(\alpha\) is presupposed to be a limit ordinal. \qedhere \end{enumerate}\end{proof}

\begin{thm}\label{Barcannotmaxim}	 \[\not\vdash\Visible\Bendcode{\forall x\Visible\Bendcode{\mathrm{B}(x)}\to \Visible\Bendcode{\forall x\mathrm{B}(x)}}. \]
\end{thm}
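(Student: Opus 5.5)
The plan is to argue by contradiction through soundness, using Postulate~\ref{semanticcorrelation} together with Lemma~\ref{failureofthebarcanformula}. Suppose that
\[
\vdash\Visible\Bendcode{\forall x\Visible\Bendcode{\mathrm{B}(x)}\to \Visible\Bendcode{\forall x\mathrm{B}(x)}}.
\]
Then this sentence is a visible theorem. Its semantical correlate, in the sense of Postulate~\ref{semanticcorrelation}, is that for every stage $\alpha$ we have $\vDash^\alpha \forall x\Visible\Bendcode{\mathrm{B}(x)}\to \Visible\Bendcode{\forall x\mathrm{B}(x)}$. For visible axioms this is given directly by the Postulate. For visible theorems I would appeal to the convention already used in the proof of Theorem~\ref{idinpersistence}: a visibly derived theorem obtained from axiomatic principles inherits stage-invariance. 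If that is not available, I would argue more weakly. By the semantics of $\Visible$ in \S\ref{revisionsemantics}, $\vDash^{\kappa^\ast}\Visible\Bendcode{C}$ together with the covering and stabilization properties of \S\ref{subsection:closureordinal} yields that $C$ holds at all sufficiently large stages, cofinally below and beyond $\kappa^\ast$. That is enough, since we only need one suitable stage at which the implication holds.

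Next I will choose a limit ordinal $\alpha$ below the closure ordinal, with $\alpha$ taken beyond the stage from which the implication holds (for instance $\alpha=\omega\cdot\xi$ for a large enough $\xi$), and set $\beta=\alpha+\omega$. Lemma~\ref{failureofthebarcanformula}(1) gives $\vDash^\beta\forall x\Visible\Bendcode{\mathrm{B}(x)}$. The implication, which holds at $\beta$, then gives $\vDash^\beta\Visible\Bendcode{\forall x\mathrm{B}(x)}$. This contradicts Lemma~\ref{failureofthebarcanformula}(2), which gives $\vDash^\beta\lnot\Visible\Bendcode{\forall x\mathrm{B}(x)}$. Since each stage is classical, with $\Vdash^0$ a maximal consistent set and later stages bivalent, the two verdicts cannot both hold. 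So the assumed visibility fails, and
\[
\not\vdash\Visible\Bendcode{\forall x\Visible\Bendcode{\mathrm{B}(x)}\to \Visible\Bendcode{\forall x\mathrm{B}(x)}}.
\]

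The main obstacle is the first step: passing from provability of a visible formula to its truth at the particular stage $\beta=\alpha+\omega$. There are two points to secure. First, soundness of all of $\mathbf{VT}$ must hold stagewise (or at least eventually), which is what Postulate~\ref{semanticcorrelation} and the remark after it are meant to supply. Second, we need limit ordinals $\alpha$ with $\alpha+\omega$ lying inside the region where the implication is validated. I would handle the second point by the cofinality argument above. A limit of the form $\omega\cdot\xi$ can be placed beyond any given ordinal below $\kappa^\ast$, since $\kappa^\ast$ is itself a limit of limits. I would also check that Lemma~\ref{failureofthebarcanformula} is stated for all limits below the closure ordinal $\varpi$, identified with $\kappa^\ast$, so that its hypotheses are met for the chosen $\alpha$.
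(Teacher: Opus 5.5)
Your proposal is correct and follows the paper's route. The paper's proof is only the one-line appeal to Lemma~\ref{failureofthebarcanformula}, and it leaves implicit the soundness step you spell out: a visible theorem forces its content to hold at the chosen stage $\beta=\alpha+\omega$, either via Postulate~\ref{semanticcorrelation} or, more robustly, via your tail argument from $\vDash^{\kappa^\ast}\Visible\Bendcode{C}$. Once that step is granted, the two clauses of the lemma refute the implication at $\beta$, which is exactly the intended contradiction.
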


\begin{proof} It suffices to appeal to Lemma \ref{failureofthebarcanformula}. 
\end{proof}

\subsection{Omega-consistency as confinement}\label{omegaconsistency}

\parencite[]{McGee1985}\index{McGee, Vann Roger}  isolated a  theory of truth which is consistent but $\omega$-inconsistent. \parencite{FriedmanSheard1987}\index{Friedman, Harvey}\index{Sheard, Alec Michael,  III {\scriptsize aka} Michael Sheard} proposed a  more substantial theory of truth, the \emph{Friedman-Sheard theory}, which inherits that  $\omega$-inconsistency property, and \parencite{Halbach1994}\index{Halbach, Volker} found that its proof-theoretic strength is the same as  the theory of ramified analysis for all finite levels, viz. $\Gamma_0$. 

An essential ingredient  in the proof of McGee's negative result fails\index{McGee, Vann Roger} in $\mathbf{VT}$, viz.  the equation we transform as
 \begin{equation}\label{mcgeecondition}
      \forall x(x\in\omega\to\Visible\Bendcode{\mathrm{A}(x)})\to\Visible\Bendcode{\forall x(x\in \omega\to\mathrm{A}(x))}),
 \end{equation}

 to our present context. Equation \ref{mcgeecondition} is used at \parencite[399]{McGee1985}.\index{McGee, Vann Roger} 
 
 Notice that in \textbf{VT}, \[\vdash \forall x(x\in\omega\to\Visible\Bendcode{\mathrm{A}(x)})\leftrightarrow \forall x\Visible\Bendcode{x\in\omega\to\mathrm{A}(x)},\] as $\omega$ is orthodox. 
 
 A version of \ref{mcgeecondition} holds in McGee's contexts on account of his inclusion of
a true  version of the  Barcan-formula for truth. But the Barcan-formula for visibility fails \emph{visibly} in \textbf{VT}, in the sense of Theorem \ref{Barcannotmaxim}.

For these  reasons,  the McGee  argument for omega inconsistency cannot be carried through for \textbf{VT}. The price we pay for retaining $\omega$-consistency, however, is to give up the \emph{Visibilized}  version of Axiom \ref{ax:IA2}:~p~\pageref{ax:IA2}. 
 
\newpage

\section{Sets, propositions, truth, and paradox}\label{stepidentity}

\epigraph{
Visibility is the primitive notion;\\
step-identity distinguishes the sets;\\
proposition-sets that are visible, are true.
}{}

\noindent We open with a summary of the section. 
The step of a vision $a$ is
\[
\Anschauung{y}{y\in a}.
\]

\noindent $a$ is \emph{step-identical} iff $a$ is identical to its own step, so that $a=\Anschauung{y}{y\in a}$.

The notions of \emph{set} and \emph{truth} are introduced, on the basis of \emph{visions}, \emph{visibility}, and the step-identity condition just introduced. That a vision $v$ is a set is expressed as $\Setpredicate(v)$, and $\Setpredicate(v)$ is true iff v is a vision which satisfies the step-identity condition, so that $v=\Anschauung{x}{x\in v}$. We first show that set visions are stable in the revision process. Vision $v$ is \emph{minimal} iff it is of the form $\Anschauung{z}{A}$, where $z$ is the $\prec$-least variable\footnote{The well-founded relation $\prec$ on symbol strings is given by Definition \ref{stringset}} not free in $A$.
A \emph{proposition} is a minimal vision, and as the variable in its defining formula does not any longer play a role, we take $\Proposisjon{A}$ to be the proposition associated with the sentence $A$. 
Proposition $\Proposisjon{B}$ is true iff it is a visible set.

\subsection{Sets}

\begin{fact}\label{SI(a)nottheorem}
 No formula $A$ in \textbf{VL} ensures that $\Anschauung{z}{A} = \Anschauung{y}{y \in\Anschauung{z}{A}}$. 
\end{fact}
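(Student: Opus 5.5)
We read the Fact as: for no formula $A$ of the visibility language (no matter what stage-$0$ maximal consistent set is chosen) is the identity $\Anschauung{z}{A}=\Anschauung{y}{y\in\Anschauung{z}{A}}$ a theorem of $\mathbf{VT}$. In other words, step-identity is never secured by logic and the axioms alone, so $\Setpredicate$ must be introduced by a separate stipulation.

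The plan is to refute, for an arbitrary $A$, the claim $\vdash a=\Anschauung{y}{y\in a}$ with $a=\Anschauung{z}{A}$. Write $a^{s}$ for $\Anschauung{y}{y\in a}$. We do this by exhibiting one revision sequence and one stage at which the two visions are not co-essential in the sense of Definition~\ref{identitydefinition}. We then appeal to Postulate~\ref{semanticcorrelation} and Theorem~\ref{idinpersistence}: a theorem must hold at every stage, and identity persists across stages.

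The key steps run as follows. First, by \textbf{VCA} and \textbf{(IA1)}, $x\in a^{s}\leftrightarrow\Visible\Bendcode{x\in a}$, while $x\in a\leftrightarrow\Visible\Bendcode{A(x)}$. Membership in the step is therefore visibility of membership, one revision level higher.

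Second, we pick a witness vision that separates $a$ from $a^{s}$. The natural candidate is $u=\Anschauung{w}{\lnot\Visible\Bendcode{w= a^{s}}}$, so that $a^{s}\in u$ fails whenever $a=a^{s}$ is visible. Alternatively, we can exploit that Bend codes make $a$ and $a^{s}$ distinct terms, and use Axiom~\ref{inequalityaxiom} together with a stage-$0$ choice containing $a\neq a^{s}$. The maximal consistent stage-$0$ set may contain $a\neq a^{s}$ because nothing visible forces the identity.

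Third, Theorem~\ref{idinpersistence} propagates $a\neq a^{s}$ to all stages of that sequence. By Postulate~\ref{semanticcorrelation}, a visible theorem would instead force $\Vdash^{\alpha} a=a^{s}$ for every $\alpha$, which is a contradiction. Finally, we check that the restriction the Remark places on stage $0$ does not exclude $a\neq a^{s}$. This requires verifying that no visible axiom \textbf{(VA1--9)}, \textbf{VCA}, or Axiom~\ref{inequalityaxiom} implies the identity; we do this by induction over the axioms, noting that each constrains only visibility and membership, never identity between syntactically distinct visions.

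The main obstacle is this consistency-of-stage-$0$ step. Showing that $a\neq a^{s}$ is admissible amounts to a small independence proof. If it fails for particular $A$ (for instance when $A$ is itself of the form $z\in b$, where the step might coincide semantically), the fallback is a syntactic argument. Under the $\prec$-well-ordering, the Bend codes $\Bendcode{a}$ and $\Bendcode{a^{s}}$ are distinct strings, since $a^{s}$ strictly contains $a$. Fungibility (Theorem~\ref{Identitytheorem}) applied to a $\#$-context sensitive to code length would then refute the identity.
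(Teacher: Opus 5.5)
The paper states this Fact without any proof, so there is no argument of the author's to compare with. Judged on its own, your proposal has a genuine gap exactly at the step you flag, and the justification you give for that step is wrong.

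\textbf{The main route.} The witness $u=\Anschauung{w}{\lnot\Visible\Bendcode{w=a^{s}}}$ is circular. On one side, $a^{s}\notin u$ holds outright: reflexivity gives $\Visible\Bendcode{a^{s}=a^{s}}$, and \textbf{(IA1)} then refutes $\Visible\Bendcode{\lnot\Visible\Bendcode{a^{s}=a^{s}}}$. On the other side, $a\in u$ is, by \textbf{VCA}, just $\Visible\Bendcode{\lnot\Visible\Bendcode{a=a^{s}}}$, which presupposes the non-identity you want.

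So everything rests on an admissible stage-$0$ set containing $a\neq a^{s}$. Your claim that the axioms ``constrain only visibility and membership, never identity'' fails. By Definition~\ref{identitydefinition}, identity \emph{is} co-essentiality $\forall u(a\in u\to b\in u)$. Through \textbf{VCA}, whose content must hold at stage $0$ by the Remark, the value of $a=a^{s}$ at stage $0$ is tied to which formulas $\Visible\Bendcode{U(a)}$ and $\Visible\Bendcode{U(a^{s})}$ the chosen set contains.

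What is actually required is a model construction with three parts:
\begin{enumerate}
\item a classically consistent maximal set containing $a\neq a^{s}$ together with the contents of \textbf{VCA}, of \textbf{(VA1--9)} (including the nested visibility claims those contents carry), and of Axiom~\ref{inequalityaxiom} (hence also $\Visible\Bendcode{a\neq a^{s}}$);
\item a check that the revision sequence generated from this set meets Postulate~\ref{semanticcorrelation} at every later stage;
\item a soundness result that \emph{all} theorems of $\mathbf{VT}$ hold at some stage such as $\kappa^{\ast}$.
\end{enumerate}
The third part is needed because the Postulate speaks only of visible axioms, so it does not by itself make a derived theorem $a=a^{s}$ hold at every stage. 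Only with all three in place does Theorem~\ref{idinpersistence} give the contradiction. None of this is carried out, and ``induction over the axioms'' cannot be run on the basis you state.

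\textbf{The fallback.} It does not work, and it would prove too much.
\begin{enumerate}
\item That $\Bendcode{a}$ and $\Bendcode{a^{s}}$ code different strings says nothing about the identity of the visions $a$ and $a^{s}$, since identity is co-essentiality, not string identity. The notion $\Setpredicate$ itself presupposes that the always syntactically distinct terms $a$ and $\Anschauung{y}{y\in a}$ can be identical.
\item The language has no predicate ``sensitive to code length.'' Codes are merely the visions $\#\bullet=\Anschauung{\ddot{v}}{\bot}$ and $\#s^{+}=\Anschauung{x}{\#s\in x}$, and telling two of them apart would itself require proving an inequality.
\item Fungibility (Theorem~\ref{Identitytheorem}) only transports formulas along an identity. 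To refute $a=a^{s}$ with it you would need some $C$ with $\vdash C(a)$ and $\vdash\lnot C(a^{s})$, which is the original problem.
\item The argument is uniform in $A$. If it succeeded it would give $\vdash a\neq a^{s}$ for every $A$, so $\mathbf{VT}$ would refute sethood outright. The paper's later postulations of $\Setpredicate$ (for classical logic, the visible axioms, the quantifier-shift principles) would then be inconsistent. That is far stronger than, and at odds with, the mere unprovability the Fact asserts.
\end{enumerate}
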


We write $\vDash A$ for $\Pi \beta$
\begin{thm}[(Set Pressure)]\leavevmode\par\vspace{2mm}\label{Stepidentitypressure}
\[
\Pi\beta(\vDash^{\beta}\Setpredicate(\Anschauung{z}{A})\Rightarrow \ \Bigl(\Pi \beta(\vDash^{\beta}\Visible\Bendcode{A}) \ \textsc{or} \  \Pi \beta(\vDash^{\beta}\Visible\Bendcode{\lnot A})\Bigr).
\] 
\end{thm}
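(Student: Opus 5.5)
The plan is to unfold $\Setpredicate(\Anschauung{z}{A})$ as the step-identity $\Anschauung{z}{A}=\Anschauung{y}{y\in\Anschauung{z}{A}}$ and then use the stage-invariance of identity from Theorem~\ref{idinpersistence}. Assume $\vDash^\beta \Setpredicate(\Anschauung{z}{A})$ for every $\beta$. By Definition~\ref{identitydefinition} and Fungibility (Theorem~\ref{fungibility}), applied at each stage, membership in the two visions then coincides at every stage. In particular, for the fixed object $z$ (here $z$ is not free in $A$ in the minimal case, and otherwise we take an arbitrary $x$), we get, for every $\beta$,
\[
\vDash^\beta\bigl(x\in\Anschauung{z}{A}\leftrightarrow x\in\Anschauung{y}{y\in\Anschauung{z}{A}}\bigr).
\]

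Next I will read both sides through the comprehension principle. Since \textbf{VCA} is a visible axiom, Postulate~\ref{semanticcorrelation} gives its matrix at every stage. The left side is therefore equivalent at each $\beta$ to $\Visible\Bendcode{A}$, and the right side to $\Visible\Bendcode{x\in\Anschauung{z}{A}}$, hence to $\Visible\Bendcode{\Visible\Bendcode{A}}$ after one more use of \textbf{VCA} under the visibility predicate. This is justified by the revision clause of \S\ref{revisionsemantics}, because the inner equivalence holds at all stages. So at every stage
\[
\vDash^\beta\bigl(\Visible\Bendcode{A}\leftrightarrow\Visible\Bendcode{\Visible\Bendcode{A}}\bigr).
\]
The step-identity hypothesis also yields the corresponding fact for the negation. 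Because the two visions are identical at every stage, and identity is orthodox (Theorem~\ref{Id1}), the biconditional is itself visible at every successor stage. The same argument applied to $\lnot A$, via the complement of the step, gives the matching biconditional for $\lnot A$.

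The last step converts this internal stability into the disjunction. Visible axioms \ref{ax:V6} and \ref{ax:V7} hold at all stages by Postulate~\ref{semanticcorrelation}. From the visibility of $\Visible\Bendcode{A}\to\Visible\Bendcode{\Visible\Bendcode{A}}$, Axiom~\ref{ax:V7} gives, at every stage $\beta$, that $\Visible\Bendcode{A}\vee\Visible\Bendcode{\lnot A}$ holds. It then remains to turn ``at every stage one of the two holds'' into ``one of them holds at every stage''. For this I will use identity persistence (Theorem~\ref{idinpersistence}) together with Axiom~\ref{ax:V5}, which excludes both holding at once. By covering and stabilization at $\kappa^\ast$ (\S\ref{subsection:closureordinal}), whichever disjunct holds at $\kappa^\ast$ holds cofinally, and then at all stages.

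I expect this last uniformity step to be the main obstacle. The revision clause only constrains stages $\beta\succ 0$, and the choice at stage~$0$ is free. So I would first try to exploit the Remark after Postulate~\ref{semanticcorrelation}, which restricts stage~$0$ to satisfy the visible axioms. The aim is to show that the disjunct selected at stage~$0$ propagates through successor and limit stages. At limits this requires a transfinite induction using the fact that the step-identity is never lost at any stage.
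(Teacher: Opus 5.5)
Your route to the stage-wise disjunction is sound, and it differs mildly from the paper's. The paper extracts $\Visible\Bendcode{\Visible\Bendcode{A}\to A}$ from step-identity and applies \textbf{(VA6)}. You instead unfold step-identity through \textbf{(VCA)} into $\Visible\Bendcode{A}\leftrightarrow\Visible\Bendcode{\Visible\Bendcode{A}}$ and apply \textbf{(VA7)}. Two small corrections to that part. First, since the biconditional holds at every stage, the revision clause makes it visible at every $\beta\succ 0$, limits included; no appeal to orthodoxy of identity is needed. Second, the ``matching biconditional for $\lnot A$ via the complement of the step'' is not justified, because membership in that complement unfolds to $\Visible\Bendcode{\lnot\Visible\Bendcode{A}}$, not $\Visible\Bendcode{\Visible\Bendcode{\lnot A}}$. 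It is also not needed.

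The genuine gap is the uniformity step. None of the three tools you cite gives it:
\begin{enumerate}
\item Identity persistence is already used up in your hypothesis that $\Setpredicate(\Anschauung{z}{A})$ holds at every stage, and it says nothing about $A$.
\item \textbf{(VA5)} only rules out both disjuncts at a single stage.
\item Covering and stabilization at $\kappa^{\ast}$ only give that the disjunct holding at $\kappa^{\ast}$ holds from some stage onward.
\end{enumerate}
So ``and then at all stages'' does not follow. Nothing you cite excludes $\Visible\Bendcode{\lnot A}$ at early stages and $\Visible\Bendcode{A}$ on a later end-segment.

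The missing idea is that step-identity makes $A$ itself constant along the revision sequence, and you already have the ingredient. Read your biconditional at a successor stage $\delta+1$ through the revision clause. Then $\vDash^{\delta+1}\Visible\Bendcode{A}$ iff $\vDash^{\delta}A$, and $\vDash^{\delta+1}\Visible\Bendcode{\Visible\Bendcode{A}}$ iff $\vDash^{\delta}\Visible\Bendcode{A}$. So $\vDash^{\delta}(A\leftrightarrow\Visible\Bendcode{A})$ holds for every $\delta$, including $0$ and limits. Two consequences follow:
\begin{enumerate}
\item At successors, $\vDash^{\delta+1}A$ iff $\vDash^{\delta}A$.
\item At a limit $\lambda$, $\vDash^{\lambda}A$ iff $A$ holds on an end-segment below $\lambda$.
\end{enumerate}
Transfinite induction then gives $A$ a single truth value at all stages. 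Hence either $\Visible\Bendcode{A}$ holds at every stage (at $0$ via $A\leftrightarrow\Visible\Bendcode{A}$), or $\Visible\Bendcode{\lnot A}$ holds at every $\beta\succ 0$.

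This is what the paper's ``iterate down to $0$'' gestures at. It pushes $A$ back from the end-segment using step-identity at every stage, not \textbf{(VA5)} or identity persistence. This argument also makes your \textbf{(VA7)} step, like the paper's \textbf{(VA6)} step, dispensable. Stage $0$ in the $\lnot A$ case still needs the separate stipulation you anticipated; the paper passes over that too.
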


\bigskip

\begin{proof}
Given the definition of $\mathsf{Set}$, we obviously have that 

\begin{equation}\label{SIto!(!AtoA)}
\Pi\beta\vDash^{\beta} \Setpredicate(\Anschauung{z}{A})\Rightarrow \Pi\beta\vDash^{\beta}\Visible\Bendcode{\Visible\Bendcode{A}\to A}.
\end{equation}
\medskip 

On account of Axiom \ref{ax:V6}, we have that

\begin{equation}\label{!(!AtoA)to(!Avee!not!}
\Pi\beta\vDash^{\beta}\Visible\Bendcode{\Visible\Bendcode{A}\to A}\to (\Visible\Bendcode{A}\vee\Visible\Bendcode{\lnot A}).
\end{equation}

By distribution of $\vDash^\beta$ over implication, 

\[
\Downarrow
\]
\begin{equation}\label{PiB!(!AtoA)toPiB(!Avee!not!} \Pi\beta\vDash^{\beta}\Visible\Bendcode{\Visible\Bendcode{A}\to A}\to \Pi\beta\vDash^{\beta}(\Visible\Bendcode{A}\vee\Visible\Bendcode{\lnot A}).
\end{equation}

\medskip

So by transitivity
\[
\Downarrow
\]

\begin{equation}\label{PiB!(!AtoA)toPiB(!Avee!not!_2} \Pi\beta\vDash^{\beta} \Setpredicate(\Anschauung{z}{A})\Rightarrow 
\Pi\beta\vDash^{\beta}(\Visible\Bendcode{A}\vee\Visible\Bendcode{\lnot A}).
\end{equation}

If $\Pi\beta\vDash^{\beta} \Setpredicate(\Anschauung{z}{A})$ and $\vDash^\varpi \Visible\Bendcode{A}$, then $\Sigma\gamma\Pi\delta(\gamma\preceq\delta\prec\varpi \Rightarrow \ \vDash^{\delta}A)$.  As 
\[
\Pi\gamma\vDash^{\gamma} \Setpredicate(\Anschauung{z}{A}),
\] it follows that even for an $\epsilon\prec\gamma, \vDash^\epsilon A$. The argument can be iterated down to $0$, so that
$\Pi \beta\Vdash^{\beta}\Visible\Bendcode{A}$. If it alternatively  holds that $\Pi\beta\vDash^{\beta} \Setpredicate(\Anschauung{z}{A})$ and $\vDash^\varpi \Visible\Bendcode{\lnot A}$, $\Pi \beta\Vdash^{\beta}\Visible\Bendcode{\lnot A}$.
\end{proof}

\medskip

\noindent

\subsection{Propositions: their notation and existence}

\begin{dfn}[of propositions]\leavevmode\par\vspace{2mm}

Given that $A$ is a sentence,

\[
\Anschauung{x}{A}
\]

is a proposition iff $x$ is the $\prec$-least variable which is not free in $A$.
\end{dfn}

\begin{dfn}[of notation for propositions]\leavevmode\par\vspace{2mm}

If $A$ is a sentence then $\langle\!\langle A\rangle\!\rangle$ denotes the proposition
\[
\Anschauung{x}{A},
\]
where $x$ is the $\prec$-least variable not free in $A$.
\end{dfn}

\subsection{Truth}

\begin{dfn}[of truth via visibility and sethod]\leavevmode\par\vspace{2mm}\label{dfnoftruth}

By Definition~\ref{bendcodesprimitivestrings}:~p.~\pageref{bendcodesprimitivestrings}, Bend coded sentences are visions. Consequently, they may be  arguments of both $\Visible$ and $\Setpredicate$:

\[
\TruthPred\Proposisjon{B} \DefEqual \Visible\Proposisjon{B} \wedge \Setpredicate\Proposisjon{B}.
\]
\end{dfn}

The definition extends uniformly to open propositions.

\begin{lem}\label{stepidentityglobality}
   For any sentence $B$. $\Pi \beta,\vDash^{\beta}\Setpredicate\Bendcode{B}$ or $\Pi \beta,\vDash^{\beta}\lnot\Setpredicate\Bendcode{B}$.
\end{lem}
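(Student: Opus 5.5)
The plan is to reduce $\Setpredicate\Bendcode{B}$ to an identity statement and then apply the persistence of identity and inequality, Theorem~\ref{idinpersistence}. By the description of sets at the start of \S\ref{stepidentity}, $\Setpredicate(v)$ holds exactly when $v=\Anschauung{y}{y\in v}$. So for a sentence $B$ we have, at every stage $\beta$,
\[
\vDash^{\beta}\Setpredicate\Bendcode{B}\quad\text{iff}\quad \vDash^{\beta}\Bendcode{B}=\Anschauung{y}{y\in\Bendcode{B}}.
\]
By Definition~\ref{bendcodesprimitivestrings} and Definition~\ref{dfnoftruth}, $\Bendcode{B}$ denotes a vision. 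Hence the right-hand side is a formula of the form $a=b$, with $a$ the vision $\Bendcode{B}$ and $b$ its step. It has no free variables, and both $a$ and $b$ are fixed closed terms that do not depend on the stage. Since Definition~\ref{identitydefinition} is purely definitional, this equivalence holds uniformly in $\beta$.

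Next I split on stage $0$. By \S\ref{revisionsemantics}, $\Vdash^0$ is a maximal consistent set, so it contains either $a=b$ or $a\neq b$.

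In the first case, the first clause of Theorem~\ref{idinpersistence} gives $\vDash^{\gamma}a=b$ for every ordinal $\gamma$, and therefore $\Pi\beta\,\vDash^{\beta}\Setpredicate\Bendcode{B}$.

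In the second case, the second clause of Theorem~\ref{idinpersistence} gives $\vDash^{\gamma}a\neq b$ for every $\gamma$. Using the stagewise equivalence above, this yields $\Pi\beta\,\vDash^{\beta}\lnot\Setpredicate\Bendcode{B}$.

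Since the two cases are exhaustive, the dichotomy in the statement follows.

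The main obstacle is justifying that the stagewise biconditional between $\Setpredicate\Bendcode{B}$ and the identity really holds at every stage, including stage $0$, where only the restriction from Postulate~\ref{semanticcorrelation} governs what is admissible. I would handle this by treating $\Setpredicate$ as a definitional abbreviation, so that at each stage it is literally the identity formula and no semantic clause for $\Setpredicate$ is needed. A secondary point is that Theorem~\ref{idinpersistence} is stated for arbitrary closed terms $a,b$, so it applies to the complex vision terms here. I would check that its proof, which relies on Equations \eqref{identityVis} and \eqref{inequalityVis} holding at all stages, uses nothing specific to constants. If that check fails, the fallback is to apply Theorem~\ref{Identitytheorem}(1), orthodoxy of identity, together with Postulate~\ref{semanticcorrelation} directly to the instance $a=b$. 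That instance is visible in every stage, so its truth value is fixed from stage $0$ onward by the revision clause.
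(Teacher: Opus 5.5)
Your proposal is correct and is essentially the paper's argument. The paper's proof is the single line ``This holds by identity theory,'' meaning exactly that $\Setpredicate\Bendcode{B}$ is the step-identity $\Bendcode{B}=\Anschauung{y}{y\in\Bendcode{B}}$, so Theorem~\ref{idinpersistence} on the persistence of identity and inequality across all stages yields the dichotomy. Your stage-$0$ case split just repeats the split already used in proving that theorem, so the fallback via orthodoxy is unnecessary.
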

\begin{proof}
This holds by identity theory.
\end{proof}
\begin{cor}
  For any sentence $B$. $\Pi \beta,\vDash^{\beta}\Proposisjon{B}$ or $\Pi \beta,\vDash^{\beta}\lnot\Proposisjon{B}$  
\end{cor}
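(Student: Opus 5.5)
The plan is to read the Corollary as the global bivalence of the proposition $\Proposisjon{B}$ taken as a truth-bearer. That is, $\vDash^{\beta}\Proposisjon{B}$ is understood as $\vDash^{\beta}\TruthPred\Proposisjon{B}$, and $\vDash^{\beta}\lnot\Proposisjon{B}$ as $\vDash^{\beta}\lnot\TruthPred\Proposisjon{B}$. This is the only reading on which a term may stand in formula position, and it is the one suggested by Definition~\ref{dfnoftruth}, after which the Corollary is placed. By that definition, $\TruthPred\Proposisjon{B}$ is the conjunction $\Visible\Proposisjon{B}\wedge\Setpredicate\Proposisjon{B}$. So it suffices to show that this conjunction takes the same value at every stage $\beta$.

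\textbf{First step.} I will split on the sethood conjunct using Lemma~\ref{stepidentityglobality}.

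If $\Pi\beta\,\vDash^{\beta}\lnot\Setpredicate\Proposisjon{B}$, then the conjunction fails at every stage. This gives $\Pi\beta\,\vDash^{\beta}\lnot\TruthPred\Proposisjon{B}$, which is the second disjunct, and we are done.

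If instead $\Pi\beta\,\vDash^{\beta}\Setpredicate\Proposisjon{B}$, then $\Proposisjon{B}=\Anschauung{z}{B}$ with $z$ not free in $B$. The hypothesis of Theorem~\ref{Stepidentitypressure} is then met with $A:=B$. It yields $\Pi\beta\,\vDash^{\beta}\Visible\Bendcode{B}$ or $\Pi\beta\,\vDash^{\beta}\Visible\Bendcode{\lnot B}$.

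\textbf{Second step.} I will connect $\Visible\Proposisjon{B}$ with $\Visible\Bendcode{B}$. I intend to argue that the two agree at every stage. The route is \textbf{VCA}: since $z$ is vacuous in $B$, it gives $\vdash\Visible\Bendcode{\forall x(x\in\Proposisjon{B}\leftrightarrow\Visible\Bendcode{B})}$. By Postulate~\ref{semanticcorrelation} this holds at every $\beta$, so visibility of the proposition is tied uniformly to visibility of $B$.

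\textbf{Third step.} With that identification, the two sub-cases close as follows.

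In the first sub-case, $\Visible\Proposisjon{B}$ holds at every stage, and so does $\Setpredicate\Proposisjon{B}$. Hence $\Pi\beta\,\vDash^{\beta}\TruthPred\Proposisjon{B}$.

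In the second sub-case, $\Visible\Bendcode{\lnot B}$ holds at every stage. Axiom~\ref{ax:V5} gives $\Visible\Bendcode{\Visible\Bendcode{\lnot B}\to\lnot\Visible\Bendcode{B}}$, which by Postulate~\ref{semanticcorrelation} holds at every stage. Using Axiom~\ref{ax:IA1} to unvisibilize, $\lnot\Visible\Bendcode{B}$ holds at every stage. Therefore $\Pi\beta\,\vDash^{\beta}\lnot\TruthPred\Proposisjon{B}$.

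\textbf{Main obstacle.} The second step is where I expect the difficulty. $\Visible$ officially applies to Bend codes, so one must justify treating $\Visible\Proposisjon{B}$ as stagewise equivalent to $\Visible\Bendcode{B}$. I would first try to get this from the stage-invariance of the visible instance of \textbf{VCA} for the vacuous abstract, as in the second step. The fallback is to use stabilization at $\kappa^{\ast}$ to transfer the equivalence from the closure stage to all stages.

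A smaller point is that Axiom~\ref{ax:IA1} is only guaranteed at $\kappa^{\ast}$, so its use in the third step needs care. There I would instead apply the revision clause directly: at stages $\beta\succ0$, if $\Visible\Bendcode{\lnot B}$ holds at all earlier stages, then $\lnot B$ holds cofinally below $\beta$. This is incompatible with $\Visible\Bendcode{B}$ at $\beta$. Stage $0$ is covered by the restriction on stage-$0$ sets noted in the Remark after Postulate~\ref{semanticcorrelation}.
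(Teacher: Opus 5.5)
You have committed to a reading that turns this Corollary into a different result, and under that reading your second step is not supplied.

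\textbf{The reading.} The Corollary comes, without proof, directly after Lemma~\ref{stepidentityglobality}. So the predicate missing in front of $\Proposisjon{B}$ is most plausibly $\Setpredicate$, and the Corollary is that Lemma transferred from $\Bendcode{B}$ to the proposition $\Proposisjon{B}$. Since $\Setpredicate\Proposisjon{B}$ is just the identity $\Proposisjon{B}=\Anschauung{y}{y\in\Proposisjon{B}}$, Theorem~\ref{idinpersistence} carries whichever of it or its negation holds at stage $0$ to every stage. That is the entire argument. It is your first step on its own, with no need for Set Pressure or any visibility reasoning. Your remark that the $\TruthPred$ reading is the only one available is therefore mistaken. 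On that reading the statement is Theorem~\ref{truthpermanency}, which the paper proves right afterwards by precisely your decomposition (Lemma~\ref{stepidentityglobality} plus Theorem~\ref{Stepidentitypressure}).

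\textbf{The gap.} Within your reading, the second step fails as proposed. \textbf{VCA} for the vacuous abstract gives $\forall x(x\in\Proposisjon{B}\leftrightarrow\Visible\Bendcode{B})$. That fixes what belongs to the vision $\Proposisjon{B}$, but says nothing about whether the predicate $\Visible$ holds of that vision. The revision clause of \S\ref{revisionsemantics} interprets $\Visible$ only on Bend codes. Moreover, $\Proposisjon{B}=\mathsf{V}zB$ is a different term from $\Bendcode{B}=\#B$, which by Definition~\ref{bendcodesprimitivestrings} denotes $\Anschauung{x}{\#s\in x}$ for the $\prec$-predecessor $s$ of the string $B$. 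Nothing in \textbf{VT} relates $\Visible\Proposisjon{B}$ to $\Visible\Bendcode{B}$, and your fallback through $\kappa^{\ast}$ already presupposes that relation at $\kappa^{\ast}$. The link can only be stipulated. The paper does this tacitly when it writes $\TruthPred\Proposisjon{A}\Leftrightarrow(\Setpredicate\Bendcode{A}\wedge\Visible\Bendcode{A})$, and when it passes from the conclusion of Theorem~\ref{Stepidentitypressure} to $\Visible\Proposisjon{A}$ in the proof of Theorem~\ref{truthpermanency}. Granted that convention, your argument is that proof.

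\textbf{Smaller points in the third step.} You need neither \ref{ax:IA1} nor the revision-clause detour. Postulate~\ref{semanticcorrelation} yields the unvisibilized content $\Visible\Bendcode{B}\to\lnot\Visible\Bendcode{\lnot B}$ of \ref{ax:V5} at every stage, stage $0$ included. As phrased, the detour also misses successor stages $\beta=\delta+1$. There $\lnot B$ at $\delta$ comes from $\Visible\Bendcode{\lnot B}$ at $\beta$ itself, not from earlier stages.
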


\begin{thm}[on ordinal permanence for \texorpdfstring{$\TruthPred\Proposisjon{A}$}{TrA} and \texorpdfstring{$\lnot\TruthPred\Proposisjon{A}$}{notTrA}.]\label{truthpermanency}\leavevmode\par\vspace{2mm}

\begin{enumerate}[label=\thethm.\arabic*]
    \item\label{thm:Trpermanence} If for some ordinal $\gamma, \vDash^\gamma  \TruthPred\Proposisjon{A}$ then for \emph{any} ordinal
        $\beta, \vDash^\beta \TruthPred\Proposisjon{A}$. 
    \item\label{thm:notTrpermanence} If for some ordinal $\gamma, \vDash^\gamma  \lnot\TruthPred\Proposisjon{A}$ then for \emph{any} ordinal
        $\beta, \vDash^\beta \lnot\TruthPred\Proposisjon{A}$. 
\end{enumerate}
\end{thm}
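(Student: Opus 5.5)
The plan is to split $\TruthPred\Proposisjon{A}$ into its two conjuncts from Definition~\ref{dfnoftruth}, namely $\Visible\Proposisjon{A}$ and $\Setpredicate\Proposisjon{A}$, and to establish stage-invariance for each under the hypothesis that the whole conjunction holds (for \ref{thm:Trpermanence}) or fails (for \ref{thm:notTrpermanence}) at some stage $\gamma$. The set conjunct is the easy one. By Lemma~\ref{stepidentityglobality} and its corollary, $\Setpredicate\Proposisjon{A}$ is either satisfied at every stage or refuted at every stage. This is because step-identity is an identity statement, and identity and inequality are persistent by Theorem~\ref{idinpersistence}. So the set component never changes value along the revision sequence, and the proof reduces to controlling the visibility component under the assumption that $\Proposisjon{A}$ is a set.

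For \ref{thm:Trpermanence}, I will assume $\vDash^\gamma \TruthPred\Proposisjon{A}$. Then $\vDash^\gamma\Setpredicate\Proposisjon{A}$, and hence $\Pi\beta\,\vDash^\beta\Setpredicate\Proposisjon{A}$ by the previous paragraph. Now Set Pressure (Theorem~\ref{Stepidentitypressure}) applies and yields either $\Pi\beta\,\vDash^\beta\Visible\Bendcode{A}$ or $\Pi\beta\,\vDash^\beta\Visible\Bendcode{\lnot A}$. The second disjunct is excluded at stage $\gamma$ itself. There $\Visible\Bendcode{A}$ holds, and the stage-invariant Axiom~\ref{ax:V5} (via Postulate~\ref{semanticcorrelation}) gives $\vDash^\gamma\lnot\Visible\Bendcode{\lnot A}$. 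Hence the first disjunct holds, and together with global sethood this gives $\vDash^\beta\TruthPred\Proposisjon{A}$ for every $\beta$. Here I identify $\Visible\Proposisjon{A}$ with $\Visible\Bendcode{A}$, which is the reading the definition intends.

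For \ref{thm:notTrpermanence}, I will assume $\vDash^\gamma\lnot\TruthPred\Proposisjon{A}$ and split into cases according to Lemma~\ref{stepidentityglobality}. If $\Proposisjon{A}$ is globally not a set, then $\lnot\TruthPred\Proposisjon{A}$ holds at every stage at once. If it is globally a set, then $\lnot\Visible\Bendcode{A}$ must hold at $\gamma$. Set Pressure again gives the dichotomy, and the branch $\Pi\beta\,\vDash^\beta\Visible\Bendcode{A}$ contradicts the situation at $\gamma$. So $\Pi\beta\,\vDash^\beta\Visible\Bendcode{\lnot A}$, and by Axiom~\ref{ax:V5}, read stage-invariantly, we get $\vDash^\beta\lnot\Visible\Bendcode{A}$ at every $\beta$, hence $\vDash^\beta\lnot\TruthPred\Proposisjon{A}$.

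The main obstacle is the reliance on Set Pressure, whose hypothesis is global sethood across all stages rather than sethood at a single stage. That is exactly why Lemma~\ref{stepidentityglobality} has to be invoked first, and the whole argument stands or falls with the persistence of identity in Theorem~\ref{idinpersistence}. A secondary delicate point is stage $0$, where visibility is fixed by the chosen maximal consistent set rather than by the revision clause. For this I will appeal to the Remark following Postulate~\ref{semanticcorrelation}: the visible axioms, in particular \ref{ax:V5} and \ref{ax:V6}, constrain stage $0$ as well, so the arguments above go through uniformly at $\beta=0$.
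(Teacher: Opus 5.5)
Your proposal is correct. Part~1 follows the paper's own proof: global sethood from Lemma~\ref{stepidentityglobality}, then Set Pressure (Theorem~\ref{Stepidentitypressure}), then recombination via Definition~\ref{dfnoftruth}. Where the paper says only ``by quantifier theory'' to pick out the disjunct $\Pi\beta\,\vDash^\beta\Visible\Bendcode{A}$, you state explicitly that the other disjunct fails at stage $\gamma$ by the stage-invariant instance of Axiom~\ref{ax:V5}. That fills a step the paper leaves implicit.

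For part~2 you take a different route. The paper does not rerun the Set Pressure argument. It observes that if $\TruthPred\Proposisjon{A}$ held at any stage $\delta$, part~1 would force it at $\gamma$ as well, contradicting $\vDash^\gamma\lnot\TruthPred\Proposisjon{A}$. So no stage satisfies $\TruthPred\Proposisjon{A}$, and since each stage is classical, every stage satisfies its negation.

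Your direct case split is also sound: either $\Proposisjon{A}$ is globally a non-set, or it is globally a set, in which case Set Pressure and Axiom~\ref{ax:V5} apply at every stage. It costs more, though. It invokes Set Pressure a second time, and it needs Axiom~\ref{ax:V5} uniformly at all stages, including stage $0$ via the Remark after Postulate~\ref{semanticcorrelation}. The contrapositive route needs only part~1 and bivalence at each stage.

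In return, your argument makes a finer fact explicit. When $\Proposisjon{A}$ is a set, $\lnot\TruthPred\Proposisjon{A}$ is always witnessed by $\Visible\Bendcode{\lnot A}$ holding at every stage. So propositions fall into exactly three stage-invariant classes: non-sets, visibly true sets, and visibly false sets.
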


\begin{proof}

The proof appeals to Theorem \ref{Stepidentitypressure} and Lemma \ref{stepidentityglobality}: 

\begin{enumerate}
    \item\ref{thm:Trpermanence} If for some ordinal 
$\gamma, \vDash^\gamma \TruthPred\Proposisjon{A}$, then $\vDash^{\gamma}\Visible\Proposisjon{A}$ and $\vDash^{\gamma}\Setpredicate\Proposisjon{A}$ for it. By Lemma \ref{stepidentityglobality}, $\Pi \beta\vDash^{\beta}\Setpredicate\Proposisjon{A}$. 
So by quantifier theory and the Set Pressure Theorem \ref{Stepidentitypressure},  $\forall \beta\vDash^\beta\Visible\Proposisjon{A}$. So by Definition \ref{dfnoftruth}, $\forall \beta\vDash^\beta\TruthPred\Proposisjon{A}.$

\item\ref{thm:notTrpermanence} If for some $\gamma, \vDash^\gamma \lnot\TruthPred\Proposisjon{B}$, then there cannot be another ordinal $\delta$ such that  
$\vDash^\delta\TruthPred\Proposisjon{B}$, as it would all conflict with Theorem \ref{thm:Trpermanence}. So it follows that $\Pi\beta\vDash^{\beta}\lnot\TruthPred\Proposisjon{B}$.
\end{enumerate}
\end{proof}

\subsection{Paradox}

Truth-theoretic paradoxes do not lead to contradiction in \textbf{VL}.
For the liar sentence $\Lambda$, one has
\[
\vdash \Lambda \leftrightarrow \lnot \TruthPred\Proposisjon{\Lambda}, \quad
\vdash \lnot \TruthPred\Proposisjon{\Lambda}, \quad
\vdash \lnot \TruthPred\Proposisjon{\lnot \Lambda}, \quad
\vdash \Lambda, \quad
\vdash \lnot \Setpredicate(\Lambda).
\]
No inconsistency follows, for the possible failure of set-hood for $\Lambda$ blocks the derivation of contradiction.

\subsection{The revenge paradox}

The sentence ``I am neither true nor step-identical'' is handled in a desirable manner.
Let
\[
\rho \leftrightarrow \lnot \TruthPred\Proposisjon{\rho} \wedge \lnot \Setpredicate(\rho).
\]

\noindent
Then one has
\[
\vdash \rho \wedge \lnot \TruthPred\Proposisjon{\rho} \wedge \lnot \Setpredicate(\rho).
\]

So the revenge liar just fails to be step-identical.

\subsection{The place for truth}

\noindent Recall Definition \ref{dfnoftruth}: 
\[
\vdash\TruthPred\Proposisjon{A}\Leftrightarrow (\Setpredicate{\Bendcode{A}}\wedge \Visible\Bendcode{A}).
\]

\begin{lem}[on Truth-Visibility Coincidence for Dicta]\leavevmode\par\vspace{2mm}

\[
\vdash\Setpredicate{\Bendcode{A}}\to(\TruthPred\Proposisjon{A}\leftrightarrow\Visible\Bendcode{A}).
\]    
\end{lem}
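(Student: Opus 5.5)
The plan is to unfold Definition~\ref{dfnoftruth} and reduce the lemma to a purely propositional tautology. The definition, as recalled at the head of this subsection, gives
\[
\vdash\TruthPred\Proposisjon{A}\leftrightarrow(\Setpredicate\Bendcode{A}\wedge\Visible\Bendcode{A}).
\]
Here I identify $\Visible\Proposisjon{A}$ with $\Visible\Bendcode{A}$ and $\Setpredicate\Proposisjon{A}$ with $\Setpredicate\Bendcode{A}$. I take this identification exactly as the paper does in the display preceding the lemma. I do not attempt to reconcile the minimal vision $\Proposisjon{A}$ with the Bend code vision $\#A$ of Definition~\ref{bendcodesprimitivestrings}; I simply treat both as names of the argument fed to $\Visible$ and $\Setpredicate$.

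The key steps, in order, are as follows.

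First, I state the biconditional above as a theorem of $\mathbf{VT}$. This follows by definitional expansion, since $\DefEqual$ introduces an abbreviation. Hence the biconditional is even visibly available by \ref{ax:V1}, though only its presence is needed.

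Second, I note the classical tautology schema
\[
(T\leftrightarrow(S\wedge V))\to\bigl(S\to(T\leftrightarrow V)\bigr),
\]
with $T,S,V$ standing for $\TruthPred\Proposisjon{A}$, $\Setpredicate\Bendcode{A}$ and $\Visible\Bendcode{A}$. It holds by a two-case check on $S$. If $S$ holds, then $S\wedge V$ is equivalent to $V$. If $S$ fails, the consequent is vacuous. Since $\mathbf{VT}$ is fully classical, every instance of this schema is a theorem.

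Third, modus ponens with the first step yields $\vdash\Setpredicate\Bendcode{A}\to(\TruthPred\Proposisjon{A}\leftrightarrow\Visible\Bendcode{A})$, which is the statement of the lemma.

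I expect no genuine mathematical obstacle; the argument is one line of propositional logic once the definition is unfolded. The only delicate point is the notational slippage between $\Proposisjon{A}$ and $\Bendcode{A}$, and the fact that Definition~\ref{dfnoftruth} is phrased with $\DefEqual$ rather than as an axiom. If a reader insists these are distinct visions, I would try to invoke the identity theory of \S\ref{identitysection}, in particular fungibility (Theorem~\ref{fungibility}). This would transport $\Visible$ and $\Setpredicate$ across an identity $\Proposisjon{A}=\Bendcode{A}$. However, I would not expect to be able to prove that identity from earlier results, so I would rest the proof on the paper's own recalled form of Definition~\ref{dfnoftruth}.
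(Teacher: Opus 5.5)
Your proof is correct and follows the paper's route: the paper's proof is the one line ``by propositional logic, from Definition~\ref{dfnoftruth},'' and your tautology $(T\leftrightarrow(S\wedge V))\to(S\to(T\leftrightarrow V))$ with modus ponens makes that step explicit. Your caveat about silently identifying $\Proposisjon{A}$ with $\Bendcode{A}$ is well taken, but the paper makes the same identification in its recalled form of the definition, so nothing further is needed.
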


\begin{proof}
    By propositional logic, from Definition \ref{dfnoftruth}.
\end{proof}

\smallskip

\begin{cor}
In quantified form,

\[
\vdash\forall y\bigl(
\Setpredicate{\Bendcode{y}}
\to
(\TruthPred\Proposisjon{y}\leftrightarrow\Visible\Bendcode{y})
\bigr).
\]
\end{cor}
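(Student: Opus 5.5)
The corollary is the universal closure of the preceding Lemma on Truth-Visibility Coincidence for Dicta, so the plan is to obtain it by generalization. The Lemma is a schema: for each sentence $A$,
\[
\vdash\Setpredicate{\Bendcode{A}}\to(\TruthPred\Proposisjon{A}\leftrightarrow\Visible\Bendcode{A}).
\]
Read through Definition~\ref{dfnoftruth}, it is a propositional tautology of the form $S\to((V\wedge S)\leftrightarrow V)$. The intended proof has three steps.

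\textbf{Step one: the open instance.} Take $y$ as a free variable ranging over visions. Then
\[
\Setpredicate{\Bendcode{y}}\to(\TruthPred\Proposisjon{y}\leftrightarrow\Visible\Bendcode{y})
\]
is, by Definition~\ref{dfnoftruth}, an instance of $S\to((V\wedge S)\leftrightarrow V)$. It is therefore a theorem of classical logic. This step relies on the remark after Definition~\ref{dfnoftruth} that the definition ``extends uniformly to open propositions''. That remark is what lets $\TruthPred\Proposisjon{y}$ abbreviate $\Visible\Proposisjon{y}\wedge\Setpredicate\Proposisjon{y}$ with $y$ free. We also need $\Proposisjon{y}$ and $\Bendcode{y}$ to denote the same vision for the predicates $\Visible$ and $\Setpredicate$, exactly as in the Lemma, where $\Bendcode{A}$ and $\Proposisjon{A}$ are used interchangeably.

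\textbf{Step two: generalization.} The free $y$ carries no assumptions, so universal generalization yields the displayed corollary. Alternatively, we can note that the open formula is a classical theorem. Then \ref{ax:V1}, together with its footnote that classical theorems are closed under $\forall$, even gives the visible version $\vdash\Visible\Bendcode{\forall y(\cdots)}$. Stripping $\Visible$ with \ref{ax:IA1} returns the stated form.

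\textbf{The main obstacle} is not logical but notational. The quantified form places a variable inside Bend-code and proposition brackets, so it must be made precise what $\Bendcode{y}$ and $\Proposisjon{y}$ denote when $y$ is a term variable. The first approach I would try is to treat $y$ as ranging over visions and to read $\Bendcode{y}$ and $\Proposisjon{y}$ as $y$ itself in argument position, or as its step. The schematic Lemma then becomes literally an open formula. Once that reading is fixed, the tautology in Step one and the generalization in Step two go through immediately.
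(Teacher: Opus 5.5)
Your proposal is correct and follows the paper's own route. The paper proves the preceding Lemma ``by propositional logic, from Definition~\ref{dfnoftruth}'', using $\Proposisjon{A}$ and $\Bendcode{A}$ interchangeably just as you do, and the Corollary is simply its universal generalization with the definition extended to open propositions. The notational issue you flag affects only the intended meaning, not derivability: the matrix is an instance of $S\to((V\wedge S)\leftrightarrow V)$ under any uniform reading of the bracketed terms, so generalization goes through even if $\Bendcode{y}$ were read as the closed code of the variable itself.
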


\medskip
\medskip

So for propositions, truth and visibility coincide. 

\noindent But \textbf{VT} by itself cannot derive even that $\vdash\TruthPred\Proposisjon{0=0}$, as it must be postulated axiomatically that certain propositions are sets. In doing the latter, caution must be exercised. But a minimal desideratum here should be that classical logic, the visible axioms of \textbf{VT}, and set-theoretical principles, which soon follow, be postulated axiomatically as step-identical.

\subsection{Expansion and reduction principles}

Definition \ref{dfnoftruth}: 
\[
\vdash\TruthPred\Proposisjon{A}\Leftrightarrow (\Setpredicate{\Bendcode{A}}\wedge \Visible\Bendcode{A}).
\]

But the Set Pressure Theorem, \ref{Stepidentitypressure}
\[
\Pi\beta(\vDash^{\beta}\Setpredicate(\Anschauung{z}{A})\Rightarrow \ \Bigl(\Pi \beta(\vDash^{\beta}\Visible\Bendcode{A}) \ \textsc{or} \  \Pi \beta(\vDash^{\beta}\Visible\Bendcode{\lnot A})\Bigr).
\]

Lemma \ref{stepidentityglobality}
   For any sentence $B$. $\Pi \beta,\vDash^{\beta}\Setpredicate\Bendcode{B}$ or $\Pi \beta,\vDash^{\beta}\lnot\Setpredicate\Bendcode{B}$

\subsection{Recovery of pretheoretically intuitive truth principles}
We have seen that strange things may happen when $\Setpredicate\Bendcode{A}$ is false,  and the sentence $A$ may in such cases lead to paradox. The following result, however, may be assuring:

\begin{thm}[of Recovery]\leavevmode\par\vspace{2mm}\label{stepidentityrecovery}  

 If \ $\vdash\Setpredicate\Proposisjon{A}$, then 
 
\[
\vdash A \leftrightarrow \TruthPred\Proposisjon{A},
\quad
\vdash \forall x\,\TruthPred\Proposisjon{A} \leftrightarrow \TruthPred\Proposisjon{\forall x A},
\quad
\vdash \TruthPred\Proposisjon{\exists x A} \leftrightarrow \exists x\,\TruthPred\Proposisjon{A}.
\]
\end{thm}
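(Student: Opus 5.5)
The plan is to use the hypothesis $\vdash\Setpredicate\Proposisjon{A}$ to reduce all three claims to statements about $\Visible$, and then exploit the orthodoxy that set-hood forces. By the Lemma on Truth-Visibility Coincidence for Dicta, the hypothesis gives
\[
\vdash\TruthPred\Proposisjon{A}\leftrightarrow\Visible\Bendcode{A}.
\]
So the first claim reduces to $\vdash A\leftrightarrow\Visible\Bendcode{A}$.

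The direction from $\Visible\Bendcode{A}$ to $A$ is just the Reflexive Axiom \ref{ax:IA1}. For the converse direction I would extract orthodoxy of $A$ from set-hood. Step-identity of $\Proposisjon{A}=\Anschauung{z}{A}$ means $\Anschauung{z}{A}=\Anschauung{y}{y\in\Anschauung{z}{A}}$. Fungibility (Theorem \ref{fungibility}), applied to membership in this vision, together with \ref{ax:VCA}, yields a visible form of $\Visible\Bendcode{\Visible\Bendcode{A}\to A}$. This is exactly the first step (\ref{SIto!(!AtoA)}) of the proof of the Set Pressure Theorem \ref{Stepidentitypressure}, now carried out syntactically. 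Axiom \ref{ax:V6} then gives $\vdash\Visible\Bendcode{A}\vee\Visible\Bendcode{\lnot A}$. In the second case, \ref{ax:IA1} yields $\lnot A$, so $A$ entails $\Visible\Bendcode{A}$ by disjunctive syllogism. This establishes $\vdash A\leftrightarrow\TruthPred\Proposisjon{A}$.

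For the quantifier clauses I read $\TruthPred\Proposisjon{A}$ with $x$ free as the open proposition, which Definition \ref{dfnoftruth} allows. I take the hypothesis to hold uniformly in $x$, with $\Setpredicate\Proposisjon{\forall xA}$ following from it, and I would flag that this must be checked. Given the first clause instance-wise and for $\forall xA$, the second claim becomes $\forall x A\leftrightarrow\forall xA$ after rewriting both sides. For a more structural route, \ref{ax:IA2} gives the left-to-right direction, and the converse-Barcan fact of Footnote \ref{barcanfootnote} gives the right-to-left direction, both for $\Visible$. The existential clause follows dually: rewrite $\TruthPred\Proposisjon{\exists xA}$ as $\exists xA$ and $\exists x\TruthPred\Proposisjon{A}$ as $\exists xA$ using the first clause. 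Alternatively, use \ref{ax:V2} internally with \ref{ax:IA1} for one direction and monotonicity of $\Visible$ (from \ref{ax:V1} and \ref{ax:V4}) for the other.

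The main obstacle is the passage from $\vdash\Setpredicate\Proposisjon{A}$ to $\vdash\Visible\Bendcode{\Visible\Bendcode{A}\to A}$. Step-identity must be unpacked through co-essentiality and \ref{ax:VCA}, and the resulting implication must itself be shown to be visibly derived. If this syntactic route stalls, I would fall back on the semantic Set Pressure Theorem together with Lemma \ref{stepidentityglobality} and Postulate \ref{semanticcorrelation}. These give, at every stage, either $\Visible\Bendcode{A}$ or $\Visible\Bendcode{\lnot A}$. I would then lift this to provability via the stage-invariance built into the axiomatics.
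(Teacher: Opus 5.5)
\paragraph{The gap: the quantifier clauses.}
The genuine gap is in the quantifier clauses, at exactly the point you flagged, and it cannot be checked away.

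\emph{What does follow.} From the stated hypothesis you get $\TruthPred\Proposisjon{\forall xA}\to\forall x\TruthPred\Proposisjon{A}$ and $\TruthPred\Proposisjon{\exists xA}\to\exists x\TruthPred\Proposisjon{A}$. Unpack $\TruthPred$, apply \ref{ax:IA1} to reach $\forall xA$ or $\exists xA$, and then use the first clause under the quantifier.

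\emph{What does not follow.} The converse directions only reach $\Visible\Bendcode{\forall xA}$ (by \ref{ax:IA2}) or $\Visible\Bendcode{\exists xA}$ (by \ref{ax:V2} with \ref{ax:IA1}). To conclude $\TruthPred$ you then need $\Setpredicate\Proposisjon{\forall xA}$ or $\Setpredicate\Proposisjon{\exists xA}$. Nothing in the paper lets you pass from set-hood of $\Proposisjon{A}$ to set-hood of the quantified proposition. By Fact~\ref{SI(a)nottheorem}, no vision is provably step-identical in \textbf{VT}, and the paper says outright that ``it must be postulated axiomatically that certain propositions are sets.'' So these two directions need set-hood of the quantified proposition as an extra hypothesis.

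The paper's own proof establishes only $A\leftrightarrow\TruthPred\Proposisjon{A}$ and never touches the quantifier clauses. This is therefore a lacuna in the statement, not something you missed.

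Separately, your alternative existential route is wrong as stated. Monotonicity of $\Visible$ (from \ref{ax:V1} and \ref{ax:V4}) yields $\exists x\Visible\Bendcode{A}\to\Visible\Bendcode{\exists xA}$, which is the same direction as \ref{ax:V2}. It never yields $\Visible\Bendcode{\exists xA}\to\exists x\Visible\Bendcode{A}$; that direction has to pass through $\exists xA$ via \ref{ax:IA1} and the first clause.

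\paragraph{The first clause.}
Here you follow the paper's idea. Its semantic argument at $\varpi$ rests on the Set Pressure Theorem~\ref{Stepidentitypressure}. That theorem's proof is exactly your chain: set-hood gives $\Visible\Bendcode{\Visible\Bendcode{A}\to A}$, and \ref{ax:V6} then gives orthodoxy.

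Running the chain syntactically buys you $\vdash$ directly. The paper instead leaves the passage from $\vDash^{\varpi}$ to $\vdash$ implicit.

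Your use of \ref{ax:IA1} to turn $\Visible\Bendcode{\lnot A}$ into $\lnot A$ is the correct way to dispose of the second disjunct. The paper's text there merely re-derives $\lnot\Visible\Bendcode{A}$, which contradicts nothing in its assumption.

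\paragraph{The step you left open.}
The step you left open, which the paper calls obvious, skips a visibility layer. Let $p=\Anschauung{z}{A}$ and $q=\Anschauung{y}{y\in p}$. Step-identity of $p$ with $q$, together with fungibility and \ref{ax:VCA}, gives $c\in p\leftrightarrow c\in q$. That is $\Visible\Bendcode{A}\leftrightarrow\Visible\Bendcode{\Visible\Bendcode{A}}$, not $\Visible\Bendcode{\Visible\Bendcode{A}\to A}$.

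To close it:
\begin{enumerate}
\item Obtain $\vdash\Visible\Bendcode{p=q}$ from Theorem~\ref{identitypersistence} and \ref{ax:IA1}.
\item Make the \ref{ax:VCA} instance doubly visible with \ref{thm:IT2}.
\item Distribute with \ref{ax:V4}. This yields $\vdash\Visible\Bendcode{\Visible\Bendcode{A}\leftrightarrow\Visible\Bendcode{\Visible\Bendcode{A}}}$.
\end{enumerate}
From there you have two options. Applying \ref{ax:V7} (with \ref{ax:IA1}) to the left-to-right half gives orthodoxy at once. Alternatively, apply the Planer Lemma~\ref{planer} to the right-to-left half to obtain $\Visible\Bendcode{\Visible\Bendcode{A}\to A}$, and continue with \ref{ax:V6} as you planned.
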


\begin{proof} of $\vdash \Setpredicate\Proposisjon{A}\wedge A \to \ \TruthPred\Proposisjon{A}$

\bigskip

    We argue semantically, and assume

    \begin{equation}
        \vDash^\varpi (\Setpredicate\Proposisjon{A}\wedge A \wedge \ \lnot\TruthPred\Proposisjon{A}).
        \end{equation}

By Theorem \ref{stepidentityglobality} and
 Definition \ref{dfnoftruth} of truth:
 
\begin{equation}\label{nexttolastinstepidentityrecoveryproof}
\Pi\beta\Pi\vDash^\beta \Setpredicate\Proposisjon{A} \ \& \ 
        \vDash^\varpi A \wedge \lnot\Visible\Bendcode{A}.
        \end{equation}

By the Set Pressure Theorem \ref{stepidentityglobality} and $\Pi\beta\Pi\vDash^\beta \Setpredicate\Proposisjon{A}$: 

\begin{equation}\label{kontradiksjonfonektingavhalveT}
    \Pi \beta(\vDash^{\beta}\Visible\Bendcode{A}) \ \textsc{or} \  \Pi \beta(\vDash^{\beta}\Visible\Bendcode{\lnot A})
\end{equation}

The first disjunct must fail, as by Equation \ref{nexttolastinstepidentityrecoveryproof}, $\vDash^\varpi \lnot\Visible{\Bendcode{A}}$. The second conjunct fails for the same reason,
as it entails $\vDash^\omega\Visible\Bendcode{\lnot A})$, and therefore $\vDash^\varpi \lnot\Visible{\Bendcode{A}}$ given the validity of $\vDash^\omega\Visible\Bendcode{\lnot A}\to\lnot\Visible\Bendcode{A}$.

\end{proof}

We next prove the other direction:

\begin{proof} of $\vdash \TruthPred\Proposisjon{A} \to \Setpredicate\Proposisjon{A}\wedge A$.

\bigskip

    We argue semantically, and assume

    \begin{equation}\label{assumptionfortruthAtoSIbendofAandA}
        \vDash^\varpi \TruthPred\Proposisjon{A}\wedge\lnot(\Setpredicate\Proposisjon{A}\wedge A).
        \end{equation}

\text{By logic:}

 \begin{equation}\label{confitionalformofVTtruthdefinition}
        \vDash^\varpi \TruthPred\Proposisjon{A}\wedge(\Setpredicate\Proposisjon{A}\to \lnot A).
        \end{equation}

By Definition \ref{dfnoftruth}:

\begin{equation}\label{fromtruthtostepidentity}
    \vDash^\varpi \TruthPred\Proposisjon{A}\to\Setpredicate\Proposisjon{A},
\end{equation}

From Equation \ref{confitionalformofVTtruthdefinition} and Equation \ref{fromtruthtostepidentity}:

\begin{equation}\label{TrAtonotA}
    \vDash^\varpi \TruthPred\Proposisjon{A}\to \lnot A).
\end{equation}

From Definition \ref{dfnoftruth}:
    
\begin{equation}\label{truebendofAtovisiblebedofA}
    \vDash^\varpi \TruthPred\Proposisjon{A}\to \Visible\Bendcode{A}).
\end{equation}

From Axiom \ref{ax:IA1} and Equation \ref{TrAtonotA}:

\begin{equation}\label{fromTrBendofAtonotvisibleBendofA}
     \vDash^\varpi \TruthPred\Proposisjon{A}\to \lnot\Visible\Bendcode{A}).
\end{equation}

From Equations \ref{assumptionfortruthAtoSIbendofAandA}, \ref{truebendofAtovisiblebedofA}, \ref{fromTrBendofAtonotvisibleBendofA} and logic:

\begin{equation}\label{FromTrBendofAtonotVisibleBendofA}
    \vDash^\varpi \TruthPred\Proposisjon{A}\to \Visible\Bendcode{A}\wedge\lnot\Visible\Bendcode{A}).
\end{equation}

By Equations \ref{assumptionfortruthAtoSIbendofAandA}, \ref{assumptionfortruthAtoSIbendofAandA}, \ref{FromTrBendofAtonotVisibleBendofA} and logic:

\begin{equation}
    \vDash^\varpi \Visible\Bendcode{A}\wedge\lnot\Visible\Bendcode{A}
\end{equation}
\end{proof}

\subsection{Conclusion}

The present framework remains intentionally weak. Set membership is governed
only by the semantic clause above, and no global closure principles are assumed.
Stronger set-theoretic principles may be obtained by postulating
\emph{manifestation points}. The author introduced these in
\parencite[345]{Bjordal2012}. In \parencite[87--105]{Bjordal2025arxiv}
it was shown that the librationist theory may be extended, by means of
extra assumptions on the extension of the set
$\{x\mid \{y\mid A\}=\{z\mid z\in \{y\mid A\}\}\}$,
in such a way that it interprets Tarski--Grothendieck set theory.
Further research may extend and sharpen such results.

Classical logic is preserved throughout in \textbf{VL}. Visibility is the only semantic notion subject to specific regulation by the revision semantics. Truth introduces no independent nonclassical behavior: in non-paradoxical contexts it behaves classically, and in paradoxical contexts its restriction is a consequence of the rigidity of Set.

\newpage

  \section{Visionary set theory: a beginning}

  \subsection{The set-theoretic vision}

The ontology of \textbf{VT} consists of visions. Set theory is therefore not concerned with all objects whatsoever, but rather with those visions satisfying the sethood condition.

\begin{dfn}[of the set-theoretic vision]
\leavevmode\par\vspace{2mm}

Let

\[
\mathcal V_{\mathrm{Set}}
\coloneqq
\Anschauung{x}{\Setpredicate(x)}.
\]

\EndDef
\end{dfn}

We assume, as is reasonable, I think, that $\mathcal{V}_{\mathrm{Set}}$ is not paradoxical. So we posit

\[
\vdash a\in\mathcal V_{\mathrm{Set}}
\leftrightarrow
\Setpredicate(a).
\]

Thus the members of \(\mathcal V_{\mathrm{Set}}\) are precisely the sets.

This permits a natural formulation of set theory within \textbf{VT}. Rather than quantifying over all visions, set-theoretic quantification may be understood as quantification restricted to the members of \(\mathcal V_{\mathrm{Set}}\). Accordingly, one may introduce the abbreviations

\[
\forall_{\mathrm{Set}}xA
\coloneqq
\forall x\bigl(x\in\mathcal V_{\mathrm{Set}}\to A\bigr),
\]

and

\[
\exists_{\mathrm{Set}}xA
\coloneqq
\exists x\bigl(x\in\mathcal V_{\mathrm{Set}}\wedge A\bigr).
\]

The distinction between visions and sets is essential. Every set is a vision, but some visions are not sets. Consequently, the existence of the set-theoretic vision does not by itself imply that the set-theoretic vision is a set. Indeed, if sets are understood as hereditary sets, there is no reason to expect that

\[
\Setpredicate(\mathcal V_{\mathrm{Set}}).
\]

\subsection{Set quantification}

Since $\Setpredicate(a)$ entails $a=\Anschauung{y}{y\in a}$, statements involving set variables can be expanded by adjoining the corresponding step-identity condition.

The truth-theoretic quantifier-shift principles are not postulated directly. Rather, they are obtained from corresponding visibility-theoretic principles. Since truth in \textbf{VT} is defined through visibility and set-hood, the proper place to state such principles is at the level of visibility. The exception is that the Barcan-formula for visibility must be postulated as true, for it is not available as a visible principle in primitive visibility theory. Accordingly, we have

\begin{equation}\label{Setfirstquantifiershift}
\Setpredicate\Proposisjon{\exists_{\mathrm{Set}}x\Visible\Proposisjon{A(x)}
\to
\Visible\Proposisjon{\exists_{\mathrm{Set}}x\Proposisjon{A(x)}}}
\end{equation}

\begin{equation}\label{Setsecondquantifiershift}
\Setpredicate\Proposisjon{\Visible\Proposisjon{\exists_{\mathrm{Set}}x\,A(x)}
\to
\exists_{\mathrm{Set}}x\Visible\Proposisjon{A(x)}}
\end{equation}

\begin{equation}\label{Setthirdquantifiershift}
\Setpredicate\Proposisjon{\Visible\Proposisjon{\forall_{\mathrm{Set}}xA(x)}
\to
\forall_{\mathrm{Set}}x\Visible\Proposisjon{A(x)}}
\end{equation}

\begin{equation}\label{Setfourthquantifiershift}
\TruthPred\Proposisjon{\forall_{\mathrm{Set}}x\Visible\Proposisjon{A(x)}
\to
\Visible\Proposisjon{\forall_{\mathrm{Set}}xA(x)}}
\end{equation}

By the step-identicality of sets, \ref{Setfirstquantifiershift}--\ref{Setfourthquantifiershift} postulate step-identicality. The Barcan formula is expected to hold when quantification is restricted to \(\mathcal V_{\mathrm{Set}}\), since paradoxical sentences are not expected to be sets. By Theorem \ref{stepidentityrecovery}
on Recovery, the corresponding principles for truth follow automatically. In this way, truth inherits its quantificational behaviour from visibility together with the step-identity imposed by set-hood.

\newpage

\section*{Appendix: Hilbert on Mathematical Knowledge}
\addcontentsline{toc}{section}{Appendix: Hilbert on Mathematical Knowledge}


The following passage is taken from Hilbert's 1930 Königsberg lecture
``Naturerkennen und Logik'' \parencite[963]{Hilbert1930}.
The quotation is historically significant for the present discussion because it vividly expresses a strongly truth-directed conception of mathematical inquiry and proof.

\begin{quote}

``Wer die Wahrheit der großzügigen Denkweise und Weltanschauung, die aus diesen Worten Jacobis hervorleuchtet, empfindet, der verfällt nicht schrittlicher und unfruchtbarer Zweifelsucht; der wird nicht denen 
glauben, die heute mit philosophischer Miene und überlegenem Tone den gang prophezeien und sich in dem Ignorabimus gefallen. Für den Mathematiker gibt es kein Ignorabimus, und meiner Meinung nach auch für die wissenschaft überhaupt nicht. Einst sagte der Philosoph Comte — in der sicht, ein gewiß unlösbares Problem zu nennen —, daß es der Wissenschaft nie gelingen würde, das Geheimnis der chemischen Zusammensetzung der Himmelskörper zu ergründen. Wenige Jahre später wurde durch die analyse von Kirchhoff und Bunsen dieses Problem gelöst, und heute können wir sagen, daß wir die entferntesten
Sterne als wichtigste physikalische und chemische Laboratorien in Anspruch nehmen, wie wir solche auf der Erde gar nicht finden. Der wahre Grund, warum es Comte nicht gelang, ein unlösbares Problem zu finden, besteht meiner Meinung nach darin, daß es ein unlösbares Problem überhaupt nicht gibt. Statt des törichten Ignorabimus heiße im Gegenteil unsere Lösung: Wir müssen wissen. Wir werden wissen.''

\end{quote}

\clearpage

\printbibliography

\end{document}